\theoremstyle{plain}
\newtheorem{lem}{Lemma}[section]
\newtheorem{cor}[lem]{Corollary}
\newtheorem{prop}[lem]{Proposition}
\newtheorem{thm}[lem]{Theorem}
\newtheorem{intthm}{Theorem}
\theoremstyle{definition}
\newtheorem{defn}[lem]{Definition}
\newtheorem{ex}[lem]{Example}
\newtheorem{disc}[lem]{Remark}
\newtheorem{construction}[lem]{Construction}
\newtheorem{fact}[lem]{Fact}
\newtheorem{convention}[lem]{Convention}
\newcommand{\cat}[1]{\mathcal{#1}}
\newcommand{\catx}{\cat{X}}
\newcommand{\catm}{\cat{M}}
\newcommand{\catg}{\cat{G}}
\newcommand{\catp}{\cat{P}}
\newcommand{\catf}{\cat{F}}
\newcommand{\cati}{\cat{I}}
\newcommand{\cata}{\cat{A}}
\newcommand{\catb}{\cat{B}}
\newcommand{\catib}{\cat{I}_B}
\newcommand{\caticd}{\cat{I}_{C^{\dagger}}}
\newcommand{\catpcd}{\cat{P}_{C^{\dagger}}}
\newcommand{\catac}{\cat{A}_C}
\newcommand{\catab}{\cat{A}_B}
\newcommand{\catbc}{\cat{B}_C}
\newcommand{\catbb}{\cat{B}_B}
\newcommand{\catacd}{\cat{A}_{\da{C}}}
\newcommand{\catbcd}{\cat{B}_{\da{C}}}
\newcommand{\catic}{\cat{I}_C}
\newcommand{\catpb}{\cat{P}_B}
\newcommand{\catpc}{\cat{P}_C}
\newcommand{\catfc}{\cat{F}'}
\newcommand{\pd}{\operatorname{pd}}
\newcommand{\id}{\operatorname{id}}	
\newcommand{\fd}{\operatorname{fd}}
\newcommand{\catpd}[1]{\cat{#1}\text{-}\pd}
\newcommand{\xpd}{\catpd{X}}
\newcommand{\xid}{\catid{X}}
\newcommand{\catid}[1]{\cat{#1}\text{-}\id}
\newcommand{\pcpd}{\catpc\text{-}\pd}
\newcommand{\fcpd}{\catfc\text{-}\pd}
\newcommand{\icid}{\catic\text{-}\id}
\newcommand{\ann}{\operatorname{Ann}}
\newcommand{\mspec}{\operatorname{m-Spec}}
\newcommand{\len}{\operatorname{len}}
\newcommand{\HH}{\operatorname{H}}
\newcommand{\Hom}{\operatorname{Hom}}	
\newcommand{\coker}{\operatorname{Coker}}
\newcommand{\spec}{\operatorname{Spec}}
\newcommand{\s}{\mathfrak{S}}
\newcommand{\tor}{\operatorname{Tor}}
\newcommand{\im}{\operatorname{Im}}
\newcommand{\da}[1]{#1^{\dagger}}
\newcommand{\Pic}{\operatorname{Pic}}
\newcommand{\End}{\operatorname{End}}
\newcommand{\Ker}{\operatorname{Ker}}
\newcommand{\ideal}[1]{\mathfrak{#1}}
\newcommand{\m}{\ideal{m}}
\newcommand{\p}{\ideal{p}}
\newcommand{\comp}[1]{\widehat{#1}}
\newcommand{\wti}{\widetilde}
\newcommand{\bbz}{\mathbb{Z}}
\newcommand{\from}{\leftarrow}
\newcommand{\xra}{\xrightarrow}
\newcommand{\onto}{\twoheadrightarrow}
\renewcommand{\geq}{\geqslant}
\renewcommand{\leq}{\leqslant}
\renewcommand{\hom}{\Hom}
\newcommand{\Ext}[4][R]{\operatorname{Ext}_{#1}^{#2}(#3,#4)}
\newcommand{\Otimes}[3][R]{#2\otimes_{#1}#3}
\renewcommand{\Hom}[3][R]{\operatorname{Hom}_{#1}(#2,#3)}
\newcommand{\Tor}[4][R]{\operatorname{Tor}^{#1}_{#2}(#3,#4)}
\renewcommand{\catfc}{\cat{F}_C}
\newcommand{\tormpc}[3]{\Tor[\catm\catpc]{#1}{#2}{#3}}
\newcommand{\torpcm}[3]{\Tor[\catpc\catm]{#1}{#2}{#3}}
\newcommand{\tormfc}[3]{\Tor[\catm\catfc]{#1}{#2}{#3}}
\newcommand{\torfcm}[3]{\Tor[\catfc\catm]{#1}{#2}{#3}}
\newcommand{\tormfcm}[3]{\Tor[\catm\catf_{C_{\m}}]{#1}{#2}{#3}}
\newcommand{\torfbmm}[3]{\Tor[\catf_{B_{\m}}\catm]{#1}{#2}{#3}}
\newcommand{\cd}{C^{\dagger}}
\newcommand{\extmic}[3]{\Ext[\catm\catic]{#1}{#2}{#3}}
\newcommand{\catfb}{\cat{F}_B}
\newcommand{\torpbm}[3]{\Tor[\catpb\catm]{#1}{#2}{#3}}
\newcommand{\torfbm}[3]{\Tor[\catfb\catm]{#1}{#2}{#3}}
\newcommand{\extpcm}[3]{\Ext[\catpc\catm]{#1}{#2}{#3}}
\numberwithin{equation}{lem}
\begin{document}

\bibliographystyle{amsplain}

\author[M. Salimi]{Maryam Salimi}
\address{Maryam Salimi\\
Department of Mathematics, Science and Research Branch, Islamic Azad University, Tehran, Iran}

\email{maryamsalimi@ipm.ir}

\author[S. Sather-Wagstaff]{Sean Sather-Wagstaff}

\address{Sean Sather-Wagstaff\\
Department of Mathematics,
NDSU Dept \# 2750,
PO Box 6050,
Fargo, ND 58108-6050
USA}

\email{sean.sather-wagstaff@ndsu.edu}

\urladdr{http://www.ndsu.edu/pubweb/\~{}ssatherw/}

\thanks{This material is based on work supported by North Dakota EPSCoR and
National Science Foundation Grant EPS-0814442. The research of Siamak Yassemi was in part supported by a grant from
IPM (No. 91130214)}

\author[E. Tavasoli]{Elham Tavasoli}
\address{Elham Tavasoli\\
Department of Mathematics, Science and Research Branch, Islamic Azad University, Tehran, Iran}

\email{elhamtavasoli@ipm.ir}

\author[S. Yassemi]{Siamak Yassemi}
\address{Siamak Yassemi\\Department of Mathematics, University
of Tehran, and School of mathematics, Institute for research
in fundamental sciences(IPM), Tehran- Iran}

\email{yassemi@ipm.ir}
\urladdr{http://math.ipm.ac.ir/yassemi/}

\title{Relative Tor functors with respect to a semidualizing module}

\date{\today}

%\dedicatory{}

\keywords{Proper resolutions, relative homology, semidualizing modules}
\subjclass[2010]{13D02, 13D05, 13D07}

\begin{abstract}
We consider  relative Tor functors built from resolutions described by a semidualizing module
$C$ over a commutative noetherian ring $R$. 
We show that the bifunctors $\torfcm i--$ and $\torpcm i--$, defined using flat-like and
projective-like resolutions, are isomorphic. We show how the vanishing of these functors 
characterizes the finiteness of the homological dimension $\fcpd$, and we use this to 
give a relation between the $\fcpd$ of a given module and that of a pure submodule.
On the other hand, we show that other
relations that one may expect to hold similarly, fail in general.
In fact, such relations force the semidualizing modules under consideration to be trivial.
\end{abstract}

\maketitle

%\tableofcontents

\section*{Introduction}

For the purposes of this paper, relative homological algebra is the study of non-traditional resolutions and 
the (co)homology theories (i.e., relative derived functors) that they define. 
By ``non-traditional'' we mean that these resolutions are not given directly by 
projective, injective, or flat modules, as they are in ``absolute'' homological algebra.
This idea goes back to 
Butler and Horrocks~\cite{butler:cer} and
Eilenberg and Moore~\cite{eilenberg:frha}.
This area has seen a lot of activity recently thanks to
Enochs and Jenda~\cite{enochs:rha}
and 
Avramov and Martsinkovsky~\cite{avramov:aratc}.

Much of the recent work on the derived functors that arise in this context has 
focused on cohomology, i.e., relative Ext; see, e.g., \cite{avramov:aratc,sather:crct,takahashi:hasm}.
The point of this paper is to begin a pointed discussion of the properties of relative Tor.
The relative homology functors that arise in this context come from resolutions that
model projective resolutions and flat resolutions. Specifically, we consider 
proper $\catpc$-resolutions 
and 
proper $\catfc$-resolutions 
where $C$ is a semidualizing module over a commutative noetherian ring $R$. 
(See Section~\ref{sec111231c} for terminology, notation, and foundational results.)

Section~\ref{sec111231a} consists of basic results about these resolutions.
By their nature, these resolutions have some similar properties, but also some different properties;
For instance, Proposition~\ref{lem111221a} shows that proper $\catpc$-resolutions 
behave well with respect to flat ring extensions, but the behavior of proper $\catfc$-resolutions in this context is not clear.
On the other hand, restriction of scalars is well-behaved for proper $\catfc$-resolutions, but not necessarily
for proper $\catpc$-resolutions, as we show in
Proposition~\ref{lem111224b}.

We have four flavors of relative homology in this context.
For instance, given a proper
$\catpc$-resolution $L$ of an $R$-module $M$, we have
$\torpcm iMN
=\HH_i(\Otimes{L}{N})$
for each $R$-module $N$ and each integer $i$.
The module $\tormpc iMN$ is defined using a proper
$\catpc$-resolution of $N$, and similarly, $\torfcm iMN$ and $\tormfc iMN$ are defined using  proper
$\catfc$-resolutions; see Definition~\ref{defn110730a}.

Certain relations between these are obvious.
For instance,  commutativity of tensor product implies that
$\torpcm iMN\cong\tormpc iNM$
and
$\torfcm iMN\cong\tormfc iNM$.
Other relations are not obvious. 
For instance, it is well-known that $\Tor iMN$ can be computed using a projective resolution of $M$
or a flat resolution of $M$. The corresponding result for relative Tor is
our first main theorem, stated next. It is contained in Theorem~\ref{prop110730a'}.

\begin{intthm}\label{intthm120115a}
Let $C$ be a semidualizing $R$-module, and 
let  $M$ and $N$ be $R$-modules.
For each $i$, there  is a natural isomorphism
$\torpcm iMN\cong\torfcm iMN$.
\end{intthm}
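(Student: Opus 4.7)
The strategy is to interpret proper $\catpc$- and $\catfc$-resolutions of $M$ via Foxby equivalence as ordinary projective and flat resolutions of $\Hom{C}{M}$, so that the desired isomorphism reduces to the classical fact that absolute Tor can be computed from either sort of resolution. The key tool is the Foxby equivalence $C\otimes_R-\colon\catac\rightleftarrows\catbc\colon\Hom{C}{-}$, which restricts to equivalences $\catp\rightleftarrows\catpc$ and $\catf\rightleftarrows\catfc$, because projective and flat modules lie in $\catac$, and whose hom-level bijections $\Hom{C\otimes_R X}{C\otimes_R Y}\cong\Hom{X}{Y}$ for $X,Y\in\catac$ upgrade to isomorphisms at the level of complexes.

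First, I would analyze a proper $\catpc$-resolution $L_\bullet\to M$. Writing $L_i=C\otimes_R P_i$ for projective $P_i$ and applying the hom-level bijection above promotes the term-wise identification to an isomorphism $L_\bullet\cong C\otimes_R P_\bullet$, where $P_\bullet$ is a genuine complex of projective $R$-modules. The proper-exactness of $L_\bullet\to M$, tested against $C\in\catpc$, forces the complex $\Hom{C}{L_\bullet}\to\Hom{C}{M}$, that is, $P_\bullet\to\Hom{C}{M}$, to be exact. Combined with the exactness of $L_\bullet\to M$ itself, this shows $M=C\otimes_R M'$ with $M'=\coker(P_1\to P_0)\in\catac$ and $M'\cong\Hom{C}{M}$, so that $M\in\catbc$ and $P_\bullet\to\Hom{C}{M}$ is an ordinary projective resolution. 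An entirely analogous argument turns a proper $\catfc$-resolution $F_\bullet\to M$ into $C\otimes_R G_\bullet$ for a flat resolution $G_\bullet\to\Hom{C}{M}$.

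With these identifications in hand, the computation is
\begin{align*}
L_\bullet\otimes_R N&\cong (C\otimes_R P_\bullet)\otimes_R N\cong P_\bullet\otimes_R(C\otimes_R N),\\
F_\bullet\otimes_R N&\cong (C\otimes_R G_\bullet)\otimes_R N\cong G_\bullet\otimes_R(C\otimes_R N),
\end{align*}
using only associativity and commutativity of $\otimes_R$. Taking homology and invoking the classical comparison of projective and flat resolutions, both sides equal $\Tor{i}{\Hom{C}{M}}{C\otimes_R N}$, which produces the required isomorphism. Naturality in $M$ and $N$ then follows from the naturality of Foxby equivalence and of the classical Tor comparison.

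The main obstacle is in the first identification: verifying that a proper $\catpc$-resolution really arises from an honest projective resolution of a module in $\catac$. One needs the hom-level Foxby bijection to be compatible with composition so that $L_\bullet$ has the form $C\otimes_R P_\bullet$ as a complex rather than merely term-wise, and one needs the single test module $X=C$ in the proper-exactness condition to be strong enough to force $M\in\catbc$ and to identify $\Hom{C}{M}$ with the cokernel $M'$. Once these translations are established, the remainder of the proof is a formal consequence of classical Tor comparison, and the required naturality is built in.
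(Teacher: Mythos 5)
Your proposal follows essentially the same route as the paper: both reduce the statement to the identification $\torpcm iMN\cong\Tor i{\Hom CM}{\Otimes CN}\cong\torfcm iMN$ and then invoke the classical fact that absolute Tor may be computed from either a projective or a flat resolution of $\Hom CM$. The only difference is directional: the paper starts from a projective resolution $P$ (resp.\ a proper flat resolution $F$) of $\Hom CM$ and checks that $\Otimes CP$ (resp.\ $\Otimes CF$) is a proper $\catpc$- (resp.\ $\catfc$-) resolution of $M$, whereas you start from an arbitrary proper resolution $L$ of $M$ and recover it as $\Otimes C{P}$ with $P=\Hom CL$. Since relative Tor is independent of the choice of proper resolution, the two directions are interchangeable, and your tensor-associativity computation at the end matches the paper's.

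One intermediate claim in your second paragraph is false, although fortunately it is not needed. You assert that the augmented complex $L_\bullet\to M\to 0$ is exact and deduce from this that $M\in\catbc(R)$. Augmented proper resolutions need not be exact (the paper emphasizes this point explicitly), and $M$ need not lie in $\catbc(R)$: every $R$-module admits a proper $\catpc$-resolution, yet for $(R,\m,k)$ local with $C$ non-free one has $\Ext iCk\cong k^{\beta_i(C)}\neq 0$ for all $i\geq 1$, so $k\notin\catbc(R)$; correspondingly $\torpcm 0kN\not\cong\Otimes kN$ in general, which is exactly the failure of right-exactness of the augmented resolution. What your argument actually requires is only that $\Hom C{L^+}$, i.e.\ $P_\bullet\to\Hom CM\to 0$, be exact, and this follows directly from properness tested against the single module $C\in\catpc(R)$ (for the $\catfc$ case, test against $C\in\catfc(R)$ and use that $\Hom C{\Otimes C{F_i}}\cong F_i$ is flat because $F_i\in\catac(R)$). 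With that repair, your identification of $\Hom CL$ as a projective (resp.\ flat) resolution of $\Hom CM$ is correct, and the remainder of the proof goes through as written.
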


This result allows for a certain amount of flexibility for proving results about relative Tor,
as in the absolute case. This is the subject of the rest of
Section~\ref{sec0}.
For instance, when $M$ and $N$ are finitely generated, it is straightforward to show that
$\torpcm iMN$ is finitely generated, while it is not obvious at all that $\torfcm iMN$ is finitely generated.
On the other hand, $\torfcm iMN$ is well-behaved with respect to flat base change, and we get to conclude that 
$\torpcm iMN$ is similarly well-behaved.
See Propositions~\ref{prop111221b} and~\ref{prop111221a}. 
This section concludes with relative versions of
Hom-tensor adjointness, tensor evaluation, and Hom evaluation
in Propositions~\ref{thm111230a}--\ref{thm111230c}.

Given these nice properties, one may be surprised to know that many properties of absolute Tor
do not pass to the relative setting. These differences are the subject of
Section~\ref{sec111223a}. For instance, in
Example~\ref{ex111220a}
we show that in general we have
\begin{gather*}
\torfcm iMN\ncong \tormfc iMN
\\
\torfcm iNM\ncong\torfcm iMN
\\
\torfcm iMN\ncong\Tor iMN.
\end{gather*}
The remainder of this section focuses on two questions.
First, Propositions~\ref{prop111223e}--\ref{prop111229b} and Examples~\ref{ex111229a}--\ref{ex111229x}
provide classes of modules $M,N$ such that the above ``non-isomorsphisms'' are isomorphisms.
Second, starting with Theorem~\ref{prop111223a}, we show that the only way that 
the above ``non-isomorsphisms'' are always isomorphisms is in the trivial case.
For instance, here is Theorem~\ref{prop111223a}.

\begin{intthm}\label{aprop111223a}
Assume that $(R,\m,k)$ is local, and let $B$ and $C$ be semidualizing $R$-modules.
The following conditions are equivalent:
\begin{enumerate}[\rm(i)]
\item \label{aprop111223a1}
$\torfbm iMN\cong\tormfc iMN$ for all $i\geq 0$ and for all $R$-modules $M$, $N$.
\item \label{aprop111223a2}
$\torfbm iB{k}\cong\tormfc iB{k}$ for $i=0$ and some $i> 0$.
\item \label{aprop111223a3}
$\torfbm i{k}C\cong\tormfc i{k}C$ for $i=0$ and some $i> 0$.
\item \label{aprop111223a4}
$B\cong R\cong C$.
\end{enumerate}
\end{intthm}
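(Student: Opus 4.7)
The plan is to establish the cyclic chain $(iv) \Rightarrow (i) \Rightarrow (ii), (iii) \Rightarrow (iv)$. The implication $(iv) \Rightarrow (i)$ is immediate: when $B \cong R$ and $C \cong R$, both $\catfb$ and $\catfc$ reduce to the class of flat $R$-modules, proper resolutions in either setting are ordinary flat resolutions, and both sides of $(i)$ collapse to the classical $\Tor{i}{M}{N}$. The implications $(i) \Rightarrow (ii)$ and $(i) \Rightarrow (iii)$ are immediate specializations to $(M, N) = (B, k)$ and $(k, C)$, respectively.

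For $(ii) \Rightarrow (iv)$, note that $B \cong R \otimes_R B$ lies in $\catfb$, so the one-term complex concentrated in degree zero is a proper $\catfb$-resolution of $B$. Hence $\torfbm{0}{B}{k} \cong B \otimes_R k$ and $\torfbm{i}{B}{k} = 0$ for $i > 0$, and the hypothesis $(ii)$ reduces to
\[
\tormfc{0}{B}{k} \cong B \otimes_R k \quad \text{and} \quad \tormfc{i}{B}{k} = 0 \text{ for some } i > 0.
\]
For $\tormfc{i}{B}{k}$ to be defined, $k$ must admit a proper $\catfc$-resolution $L_\bullet \to k$. Each $L_j$ lies in $\catfc$, which is contained in the Bass class $\catbc$, so the evaluation map $C \otimes_R \Hom[R]{C}{L_j} \to L_j$ is an isomorphism, with $F_j := \Hom[R]{C}{L_j}$ flat by tensor evaluation. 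By naturality of the evaluation map, $L_\bullet$ is isomorphic to $C \otimes_R F_\bullet$ as complexes, and properness of $L_\bullet \to k$ applied with the test module $C \in \catfc$ shows that $F_\bullet$ is a flat resolution of $\Hom[R]{C}{k}$. Computing $H_0$ on both sides of the isomorphism $L_\bullet \cong C \otimes_R F_\bullet$ forces the natural evaluation map $C \otimes_R \Hom[R]{C}{k} \to k$ to be an isomorphism. Counting $k$-dimensions (using $\dim_k \Hom[R]{C}{k} = \dim_k (C \otimes_R k) = \beta_0(C)$) yields $\beta_0(C)^2 = 1$, so $\beta_0(C) = 1$ and $C$ is cyclic. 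A cyclic semidualizing $R$-module is isomorphic to $R$, so $C \cong R$.

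With $C \cong R$, $\catfc$ is the class of flat $R$-modules and $\tormfc{i}{B}{k} \cong \Tor{i}{B}{k}$, so the remaining vanishing $\Tor{i}{B}{k} = 0$ at some $i > 0$ gives $\pd_R(B) < \infty$ for the finitely generated module $B$ over the local ring $R$. Since $B$ is semidualizing, $\depth_R(B) = \depth R$ by Foxby, and Auslander--Buchsbaum forces $\pd_R(B) = 0$; hence $B$ is free, and the semidualizing condition $\Hom[R]{B}{B} \cong R$ forces $B$ to be free of rank one, so $B \cong R$. The implication $(iii) \Rightarrow (iv)$ is handled symmetrically: $C \cong R \otimes_R C$ lies in $\catfc$, trivializing the $\tormfc$-side; the existence of a proper $\catfb$-resolution of $k$ forces $B \cong R$ by the same argument with the roles of $B$ and $C$ exchanged; and the remaining vanishing gives $C \cong R$.

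The main obstacle is the complex-level identification of $L_\bullet$ with $C \otimes_R F_\bullet$ via naturality of the evaluation map. This relies on the containment $\catfc \subset \catbc$ and the tensor-evaluation isomorphism $\Hom[R]{C}{F \otimes_R C} \cong F$ valid for $F$ flat and $C$ finitely presented. The remaining steps assemble standard facts about Bass class membership, cyclic semidualizing modules, and the classical result that a semidualizing module of finite projective dimension over a local ring is free of rank one.
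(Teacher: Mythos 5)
Your overall strategy coincides with the paper's: use the $i=0$ instance of the hypothesis to force $\beta_0(C)=1$, hence $C\cong R$, and then use the instance at some $i>0$ to force $\pd_R(B)<\infty$, hence $B\cong R$; condition (iii) is handled symmetrically. The paper extracts both computations from its balance theorem $\torfcm iMN\cong\Tor i{\Hom CM}{\Otimes CN}$ (Theorem~\ref{prop110730a'}), whereas you work directly with a proper $\catfc$-resolution $L$ of $k$. That is a legitimate alternative, and your identifications ($L\cong\Otimes C{\Hom CL}$ via naturality of evaluation, with $\Hom CL$ a flat resolution of $\Hom Ck$) are correct; they are essentially Lemma~\ref{lem111220a} of the paper.

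There is, however, a genuine gap at the decisive step. You assert that ``computing $H_0$ on both sides of the isomorphism $L\cong\Otimes CF$ forces the natural evaluation map $\Otimes C{\Hom Ck}\to k$ to be an isomorphism.'' It does not. The computation yields $H_0(L)\cong\Otimes C{\Hom Ck}\cong k^{\beta_0(C)^2}$, and nothing identifies this with $k$: an augmented \emph{proper} $\catfc$-resolution need not be exact (Remark~\ref{disc111219a}), so $H_0(L)$ is not $k$ unless $\beta_0(C)=1$, which is what you are trying to prove. If your assertion really followed from what precedes it, it would hold with no hypothesis at all and would show that every semidualizing module over a local ring is free, which is false. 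The hypothesis (ii) at $i=0$ never actually enters your deduction as written. The repair is short: since $\Otimes B-$ is right exact, $\tormfc 0Bk=\HH_0(\Otimes BL)\cong\Otimes B{\HH_0(L)}\cong k^{\beta_0(B)\beta_0(C)^2}$, while $\torfbm 0Bk\cong\Otimes Bk\cong k^{\beta_0(B)}$; comparing $k$-dimensions in (ii) gives $\beta_0(B)\beta_0(C)^2=\beta_0(B)$, and since $\beta_0(B)\neq 0$ this forces $\beta_0(C)=1$, so $C$ is cyclic and $C\cong R$. With that inserted, the remainder of your argument --- the vanishing at some $i>0$ giving $\beta_i(B)=0$, hence $\pd_R(B)<\infty$ and $B\cong R$, and the symmetric treatment of (iii)$\Rightarrow$(iv) --- is correct.
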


Section~\ref{sec111231d} discusses $\fcpd$,
the homological dimension obtained from bounded proper $\catfc$-resolutions,  and its relation to relative Tor.
First, in Proposition~\ref{lem111230a}
we note that this is the same homological dimension as the one calculated from bounded acyclic $\catfc$-resolutions.
From this, we deduce some flat base change results for $\fcpd$.
In Theorems~\ref{prop110811d} and~\ref{prop111231a} we prove the next result which characterizes modules of
finite $\fcpd$ in terms of vanishing of relative Tor.

\begin{intthm}\label{intthm120115b}
Let $C$ be a semidualizing $R$-module, and
let $M$  be an $R$-module. Given an integer $n\geq 0$, consider the following conditions:
\begin{enumerate}[\rm(i)]
\item \label{intthm120115b1}
$\torfcm{i} M- = 0$ for all $i>n$;
\item \label{intthm120115b2}
$\torfcm{n+1} M- = 0$; and
\item \label{intthm120115b3}
$\fcpd_R(M)\leq n$.
\item \label{intthm120115b4}
$\torfcm{i} M{R/\m} = 0$ for all $i>n$ and for each $\m\in\mspec(R)$;
\item \label{intthm120115b5}
$\torfcm{n+1} M{R/\m} = 0$ for each $\m\in\mspec(R)$; and
\item \label{intthm120115b6}
$\pcpd_R(M)\leq n$.
\end{enumerate}
The conditions \eqref{intthm120115b1}--\eqref{intthm120115b3} are always equivalent.
If $M$ is finitely generated, then conditions \eqref{intthm120115b1}--\eqref{intthm120115b6} are  equivalent.
\end{intthm}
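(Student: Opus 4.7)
The plan is to prove the equivalence of (i)--(iii) for arbitrary $M$, and then bring in (iv)--(vi) under the finite generation hypothesis via Theorem~\ref{intthm120115a}, flat base change (Propositions~\ref{prop111221a} and~\ref{prop111221b}), and a residue-field detection argument. The implications (i) $\Rightarrow$ (ii), (i) $\Rightarrow$ (iv), and (iv) $\Rightarrow$ (v) are trivial. For (iii) $\Rightarrow$ (i), Proposition~\ref{lem111230a} supplies a bounded proper $\catfc$-resolution $F \to M$ of length at most $n$, from which $\torfcm i M N = \HH_i(\Otimes F N) = 0$ for all $i > n$.

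The substantive step in the first part is (ii) $\Rightarrow$ (iii). I would take any proper $\catfc$-resolution $F$ of $M$, set $K_n = \Ker(F_{n-1}\to F_{n-2})$, and exploit the short exact sequences $0\to K_{j+1}\to F_j \to K_j \to 0$---which are $\Hom_R(\catfc,-)$-exact by properness---to dimension-shift, obtaining $\torfcm 1 {K_n} N \cong \torfcm{n+1} M N = 0$ for every $N$. The truncation $0 \to K_n \to F_{n-1} \to \cdots \to F_0 \to M \to 0$ will then be a bounded proper $\catfc$-resolution of length $\leq n$, yielding $\fcpd_R M \leq n$, provided $K_n \in \catfc$. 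This is the main obstacle, and I expect to clear it via a relative analogue of the classical statement ``$\Tor_1^R(X,-) = 0 \Rightarrow X$ flat''; such an analogue should follow by transporting the classical version across the Foxby equivalence $\Otimes{C}{-}\colon\catac\to\catbc$, under which the class $\catfc$ corresponds precisely to the flat modules sitting inside $\catac$.

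Now assume $M$ is finitely generated. The inclusion $\catpc \subseteq \catfc$ gives (vi) $\Rightarrow$ (iii). For (iii) $\Rightarrow$ (vi), the key observation is that a finitely generated module $X \in \catfc$ automatically lies in $\catpc$: since $C$ is a finitely generated semidualizing module over the noetherian ring $R$, the Foxby preimage $\Hom_R(C,X)$ is a finitely generated flat $R$-module, hence projective, so $X \cong \Otimes{C}{\Hom_R(C,X)} \in \catpc$. A standard syzygy construction then upgrades a bounded proper $\catfc$-resolution of $M$ to a bounded proper $\catpc$-resolution of the same length.

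It remains to derive (iii) from (v). Theorem~\ref{intthm120115a} rewrites (v) as $\torpcm{n+1} M {R/\m} = 0$ for every $\m \in \mspec(R)$, and Proposition~\ref{prop111221a} localizes this to $\torpcm{n+1} {M_\m}{k(\m)} = 0$, computed over $R_\m$ with respect to the localized semidualizing module $C_\m$. Over the local ring $R_\m$ the finitely generated case of the theorem reduces, via the same dimension-shift argument as in Part~1, to the claim that a finitely generated $R_\m$-module $K$ with $\torpcm 1 K {k(\m)} = 0$ lies in $\catpc$; this is a Nakayama-type argument transported through Foxby equivalence, using that any finitely generated flat module over a local noetherian ring is free. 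This yields $\pcpd_{R_\m} M_\m \leq n$ at each maximal ideal $\m$, and a concluding flat-base-change assembly delivers $\fcpd_R M \leq n$, completing the circle.
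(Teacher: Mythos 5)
Your outline has the right shape, but the central step (ii)~$\Rightarrow$~(iii) rests on a construction that is not available here, and this is exactly the difficulty the paper has to circumvent. Your dimension shift requires breaking the augmented resolution $F^+$ into short exact sequences $0\to K_{j+1}\to F_j\to K_j\to 0$. But augmented proper $\catfc$-resolutions need not be exact (Remark~\ref{disc111219a}); indeed, exactness of a bounded one already forces $M\in\catbc(R)$ (Lemma~\ref{lem120121c}), and membership in the Bass class is part of what finiteness of $\fcpd_R(M)$ is supposed to deliver, not a standing hypothesis. Concretely, for $M=k$ over a local ring with $C$ non-free one has $\torfcm{0}{k}{C}\ncong\Otimes{k}{C}$ (Example~\ref{ex111220a}), so the augmented proper $\catfc$-resolution of $k$ is not exact and your modules $K_j$ do not sit in the exact sequences the shift needs. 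Separately, the lemma you would need at the end --- that $\torfcm{1}{K}{-}=0$ forces $K\in\catfc(R)$ --- is asserted, not proved, and it does not follow by ``transporting the classical flatness criterion across Foxby equivalence'': the functor $\Otimes{C}{-}$ is an equivalence only between $\catac(R)$ and $\catbc(R)$, and you do not know a priori that $K$ (or $M$) lies in the Bass class. The same two objections undermine the local Nakayama argument you propose for (v)~$\Rightarrow$~(iii).

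The paper's proof of (i)~$\Leftrightarrow$~(ii)~$\Leftrightarrow$~(iii) (Theorem~\ref{prop110811d}) never resolves $M$ at all: writing $(-)^\vee=\Hom{-}{E}$ for a faithfully injective $E$, it uses Theorem~\ref{prop110730a'} together with the duality $\extmic{i}{N}{M^\vee}\cong\Hom{\tormpc{i}{N}{M}}{E}$ of Proposition~\ref{thm111230a} to translate each of (i)--(iii) into the corresponding statement about $\extmic{i}{-}{M^\vee}$ and $\icid_R(M^\vee)$ (the last via Fact~\ref{fact110811a}), and then quotes the known relative-Ext characterization of $\icid$ (Fact~\ref{fact110811d}). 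For the finitely generated half, condition (v) is handled directly in Theorem~\ref{prop111231a}: since $\Otimes{C}{R/\m}\cong(R/\m)^{\beta_0(\m;C)}$ with $\beta_0(\m;C)\neq 0$, condition (v) gives $\Tor{n+1}{\Hom{C}{M}}{R/\m}=0$ for all $\m$, hence $\pd_R(\Hom{C}{M})\leq n$ because $\Hom{C}{M}$ is finitely generated, hence $\pcpd_R(M)\leq n$ by Fact~\ref{fact111230a}; no localization and no syzygy construction on $\catfc$-resolutions is required. To salvage your route you would have to prove the relative flatness criterion and work around the non-exactness of proper resolutions, at which point you would essentially be re-deriving Fact~\ref{fact110811d}.
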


Section~\ref{sec110818a} contains the following application to pure submodules, motivated by a result of Holm and White~\cite{holm:fear}.
See Theorem~\ref{thm111231a}.

\begin{intthm}\label{xthm111231a}
Let $C$ be a semidualizing $R$-module, and let
$M'\subseteq M$ be a pure submodule. Then one has
$$\fcpd_R(M)\geq\sup\{\fcpd_R(M'),\fcpd_R(M/M')-1\}.$$
\end{intthm}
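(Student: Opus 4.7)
The plan is to derive a short exact sequence
\begin{equation*}
0 \to \torfcm{i}{M'}{N} \to \torfcm{i}{M}{N} \to \torfcm{i}{M/M'}{N} \to 0
\end{equation*}
for every $i \geq 0$ and every $R$-module $N$, and then to read off the claimed inequality via the characterization of $\fcpd$ by vanishing of relative Tor supplied by Theorem~\ref{intthm120115b}. The two inputs will be (a) purity of $0 \to M' \to M \to M/M' \to 0$, which guarantees that tensoring with any module preserves exactness, and (b) the Lazard/Raynaud--Gruson-style description of a pure exact sequence as a filtered colimit of split short exact sequences with common kernel, which will collapse the long exact sequence of $\torfcm$ into the short one above.

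For the long exact sequence, fix a proper $\catfc$-resolution $L\xra{\simeq} N$. Purity gives exactness of $0 \to \Otimes{M'}{L_j} \to \Otimes{M}{L_j} \to \Otimes{(M/M')}{L_j} \to 0$ in each degree $j$, so homology of this short exact sequence of complexes yields a long exact sequence in $\torfcm{*}{-}{N}$. To collapse it to a short exact sequence, write $M/M' = \varinjlim_\alpha C_\alpha$ with each $C_\alpha$ finitely presented, and use purity to lift every map $C_\alpha \to M/M'$ through $M \onto M/M'$; pulling back produces a filtered system of split short exact sequences $0 \to M' \to M_\alpha \to C_\alpha \to 0$ whose colimit recovers the original sequence. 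Each split sequence produces a split short exact sequence
\begin{equation*}
0 \to \torfcm{i}{M'}{N} \to \torfcm{i}{M_\alpha}{N} \to \torfcm{i}{C_\alpha}{N} \to 0,
\end{equation*}
and since $\torfcm{i}{-}{N} = \HH_i(\Otimes{-}{L})$ commutes with filtered colimits in its first argument (tensor product and homology both do), exactness of filtered colimits in $R$-$\Mod$ passes the sequence to the colimit and yields the desired short exact sequence.

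With that sequence in hand, set $n := \fcpd_R(M)$, which may be assumed finite. Theorem~\ref{intthm120115b} forces $\torfcm{i}{M}{N}=0$ for every $i>n$ and every $N$; the short exact sequence then propagates this vanishing to $\torfcm{i}{M'}{N}$ and $\torfcm{i}{M/M'}{N}$, so applying Theorem~\ref{intthm120115b} in the other direction produces $\fcpd_R(M') \leq n$ and $\fcpd_R(M/M') \leq n$, which certainly implies the stated inequality. The step I expect to be most delicate is the pullback-and-colimit construction: arranging a functorial filtered system of split short exact sequences with common kernel $M'$ whose colimit genuinely reproduces the original pure exact sequence, so that exactness of filtered colimits may be invoked directly on the Tor level and the connecting maps in the long exact sequence really do vanish.
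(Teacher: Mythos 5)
There is a genuine gap, and it sits exactly at the point this paper is most concerned with: which variable gets resolved. You fix a proper $\catfc$-resolution $L$ of $N$ and take homology of $\Otimes{M}{L}$; by Definition~\ref{defn110730a} that module is $\tormfc{i}{M}{N}\cong\torfcm{i}{N}{M}$, \emph{not} $\torfcm{i}{M}{N}$. Example~\ref{ex111220a} shows these two functors genuinely differ (relative Tor is not balanced), and the vanishing criterion you want to invoke, Theorem~\ref{prop110811d}, characterizes $\fcpd_R(M')$ through $\torfcm{i}{M'}{-}$, i.e., through a proper $\catfc$-resolution of $M'$ itself. So the exact sequence you build lives in the functor $\torfcm{i}{N}{-}$, whose vanishing is governed by $\fcpd_R(N)$; it never connects $\fcpd_R(M)$ to $\fcpd_R(M')$ or $\fcpd_R(M/M')$. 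The same confusion undermines the colimit step: the claim that the functor ``commutes with filtered colimits (tensor product and homology both do)'' is valid for $\HH_i(\Otimes{-}{L})$, but what your plan requires is that $\torfcm{i}{-}{N}$, defined via proper $\catfc$-resolutions of the varying first argument, commutes with filtered colimits, and that does not follow from the description you give. Note also that without the collapse to short exact sequences, the long exact sequence alone only yields the usual two-of-three bounds, so the collapse is essential to your plan and cannot be waved through.

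The repair goes through Theorem~\ref{prop110730a'}: $\torfcm{i}{M}{N}\cong\Tor{i}{\Hom CM}{\Otimes CN}$. This is how the paper argues: since $C$ is finitely generated, Proposition~\ref{prop110818a} shows $\Hom C{M'}\subseteq\Hom CM$ is again pure, and a diagram chase against a truncated projective resolution of $\Otimes CG$ gives $\torfcm{n+1}{M'}{G}=0$ for every $G$, hence $\fcpd_R(M')\le n$. For the quotient the paper does not attempt your short-exact-sequence collapse at all; it settles for the two-of-three inequality $\fcpd_R(M/M')\le\fcpd_R(M)+1$ of Corollary~\ref{cor120126a}, which is all the stated theorem asks for (hence the $-1$ in the sup). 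Your strategy, if rerouted through Theorem~\ref{prop110730a'} --- using that $\Hom C-$ commutes with filtered colimits because $C$ is finitely presented, together with the standard description of pure exact sequences as filtered colimits of split ones --- would be aiming at the stronger bound $\fcpd_R(M/M')\le\fcpd_R(M)$; but as written the argument does not establish even the stated inequality.
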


\section{Background Material}\label{sec111231c}

\begin{convention}
Throughout this paper $R$ and $S$ are commutative noetherian rings, and
$\mathcal{M}(R)$ is the category of $R$-modules.
We use the term ``subcategory of $\catm(R)$" to mean a ``full, additive subcategory
 $\mathcal{X} \subseteq \mathcal{M}(R)$ such that, for all $R$-modules
 $M$ and $N$, if $M\cong N$ and $M \in \mathcal{X}$, then $N \in \mathcal{X}$."
 Write $\catp(R)$, $\catf(R)$ and $\cati(R)$ for the subcategories of
 projective, flat and injective $R$-modules, respectively.
Write $\mspec(R)$ for the set of maximal ideals of $R$.
\end{convention}

\subsection*{General Notions}

\begin{defn}\label{defn111219a}
An \emph{$R$-complex} is a sequence of $R$-module homomorphisms
$$Y = \cdots \stackrel{\partial _{n+1} ^{Y} } \longrightarrow
Y_{n} \stackrel{\partial _{n} ^{Y} } \longrightarrow Y_{n-1}
\stackrel{\partial _{n-1} ^{Y} } \longrightarrow \cdots $$
such that 
$\partial _{n-1}^{Y} \partial _{n} ^{Y} = 0$ for each integer $n$.
When $Y$ is an $R$-complex, set $\HH_n(Y)=\Ker(\partial^Y_n)/\im(\partial^Y_{n+1})$ for each $n$.
Given a subcategory $\catx$ of $\catm(R)$,
an $R$-complex $Y$ is \emph{$\Hom \catx -$-exact} if the complex
$\Hom XY$ is exact for each $X$ in $\catx$.
The term \emph{$\Hom -\catx$-exact} is defined similarly.

Given two $R$-complexes $Y$ and $Z$, a \emph{chain map} $f\colon Y\to Z$ is a 
sequence of $R$-module homomorphisms $\{f_i\colon Y_i\to Z_i\}$ making  the obvious ``ladder-diagram''
commute. A chain map $f\colon Y\to Z$ is a \emph{quasiisomorphism} if the induced map
$\HH_i(f)\colon\HH_i(Y)\to\HH_i(Z)$ is an isomorphism for each $i$.
In general, the complexes $Y$ and $Z$ are \emph{quasiisomorphic} provided that there is a sequence of quasiisomorphisms
$Y\from Y^1\to Y^2\from\cdots\from Y^m\to Z$ for some integer $m$.
\end{defn}

In this paper,  resolutions are built from precovers, and
 coresolutions are built from preenvelopes, defined next.
For more details about precovers and preenvelopes, the reader may consult
\cite [Chapters 5 and 6]{enochs:rha}.

\begin{defn}
Let $\mathcal{X}$ be a subcategory of $\mathcal{M}(R)$ and let $M$ be an $R$-module.
An \emph{$\mathcal{X}$-precover} of $M$ is an $R$-module homomorphism $\varphi \colon X \to M$,
 where $X \in \mathcal{X}$, and such that the sequence
 $$\Hom {X'}{\varphi}  \colon \Hom {X'}{X}  \to
 \Hom {X'}{M} \to 0$$ is exact for every $X' \in \mathcal{X}$.
 If every $R$-module admits $\mathcal{X}$-precover, then the class
 $\mathcal{X}$ is \emph{precovering}. 
 The terms \emph{$\mathcal{X}$-preenvelope} and \emph{preenveloping} are
  defined dually.

Assume that $\mathcal{X}$ is precovering. Then each $R$-module $M$
has an \emph{augmented proper $\mathcal{X}$-resolution}, that is, an $R$-complex
$$X ^ {+} = \cdots \stackrel{\partial _{2} ^{X} } \longrightarrow
X_{1} \stackrel{\partial _{1} ^{X} } \longrightarrow X_{0}
 \xra{\tau} M \longrightarrow 0 $$ such that
$\Hom {Y}{X^{+}}$ is exact for all $Y \in \mathcal{X}$. The truncated complex
$$X  = \cdots \stackrel{\partial _{2} ^{X} } \longrightarrow
X_{1} \stackrel{\partial _{1} ^{X} } \longrightarrow X_{0}
\longrightarrow 0$$ is a \emph{proper $\mathcal{X}$-resolution} of $M$.
The \emph{$\mathcal{X}$-projective dimension} of $M$ is
$$\xpd_R(M)=\inf\{\sup\{n\mid X _{n} \neq 0 \}\mid
\text{$X$  is a proper $\mathcal{X}$-resolution of  $M$}\}.
$$
\emph{Proper
$\mathcal{X}$-coresolutions} and $\xid$ are defined dually. 

When $\mathcal{X}$ is the class of projective $R$-modules,
we write $\pd_{R}( M)$ for the associated homological
dimension and call it the \emph{projective dimension} of $M$.
Similarly, the flat and injective dimensions of $M$ are
denoted $\fd_{R} (M)$ and $\id _{R} (M)$.
\end{defn}

\begin{disc}\label{disc111219a}
Let $\mathcal{X}$ be a precovering subcategory of $\mathcal{M}(R)$.
We note explicitly that augmented proper $\catx$-resolutions need not be exact. 

According to our definitions, we have $\xpd_R(0)=-\infty$.
The modules of $\mathcal{X}$-projective dimension zero are the non-zero
modules in $\mathcal{X}$. 

Note that projective resolutions (in the usual sense) are automatically proper.
Also, note that augmented proper flat resolutions are automatically exact.
\end{disc}

The following result shows that there is some versatility in proper flat resolutions. 
It is for use in Proposition~\ref{lem111230a}.

\begin{lem}\label{lem111231a}
Let $N$ be a module such that there is an exact sequence
$$0\to G_n\to\cdots\to G_0\to N\to 0$$
where each $G_i$ is flat.
Let $F$ be a proper flat resolution of $N$,
and set $K_n=\im(\partial^F_{n+1})$.
Then the truncation
$$\wti F^+=(0\to K_n\to F_{n-1}\xra{\partial^F_{n-1}}\cdots\xra{\partial^F_{1}}F_0\to N\to 0)$$
is also a proper flat resolution of $N$. 
\end{lem}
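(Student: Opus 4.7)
The lemma asserts that $\wti F^+$ is an augmented proper flat resolution of $N$, which amounts to three checks: (i) the new leftmost module $K_n$ is flat; (ii) the complex $\wti F^+$ is exact; and (iii) $\Hom{F'}{\wti F^+}$ is exact for every flat $R$-module $F'$.

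The technical heart is (i). The hypothesized finite flat resolution of length $n$ yields $\fd_R(N)\leq n$. Since $F^+$ is a proper flat resolution, Remark~\ref{disc111219a} tells us $F^+$ is exact, so $F$ is an honest flat resolution of $N$. Splitting $F$ into short exact sequences of the form $0\to M_{k+1}\to F_k\to M_k\to 0$ (with $M_k$ the appropriate syzygy module and each $F_k$ flat) and iterating the long exact sequence in Tor yields the standard dimension-shift isomorphism $\Tor{1}{K_n}{-}\cong \Tor{j}{N}{-}$ for a sufficiently large index $j$; the latter vanishes because $\fd_R(N)\leq n$, so $K_n$ is flat.

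Check (ii) is immediate: $\wti F^+$ is obtained from the exact complex $F^+$ by truncating and inserting $K_n$ as the syzygy module at the cap, preserving exactness at every position by construction. For (iii), compare $\Hom{F'}{\wti F^+}$ with the hypothesized-exact complex $\Hom{F'}{F^+}$: the two agree on all positions from the cap downward through $N$, so exactness in that range is inherited from the properness of $F^+$. At the cap position itself, the leftmost map of $\Hom{F'}{\wti F^+}$ is injective by left-exactness of $\Hom{F'}{-}$ applied to the monomorphism embedding $K_n$; at the adjacent spot, both the image of $\Hom{F'}{K_n}$ and the kernel of the next differential coincide with the submodule of those maps $F'\to F_\bullet$ whose image lies in $K_n$, so exactness holds there as well.

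The main obstacle is step (i), specifically orchestrating the Tor dimension-shift along the correct sequence of syzygies; once (i) is in hand, (ii) is automatic and (iii) reduces to a short diagram chase that exploits the fact that $K_n$ is, by construction, a kernel of one of the differentials of $F$.
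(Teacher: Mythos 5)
Your proposal is correct and follows essentially the same route as the paper: exactness of $\wti F^+$ from the automatic exactness of $F^+$, flatness of the syzygy at the cap, and a degree-by-degree comparison of $\Hom{F'}{\wti F^+}$ with $\Hom{F'}{F^+}$ using left exactness of $\Hom{F'}{-}$ near the cap and properness of $F$ below it. The only cosmetic difference is that the paper dispatches the flatness of $K_n$ by citing a standard version of Schanuel's Lemma, whereas you prove that same fact directly by the Tor dimension-shifting argument.
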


\begin{proof}
Note that Remark~\ref{disc111219a} implies that $F^+$ is exact, so $\wti F^+$ is also exact.
A standard version of Schanuel's Lemma implies that $K_n$ is flat.
Let $G$ be a flat $R$-module. We need to show that $\Hom G{\wti F^+}$ is exact.
The left exactness of $\Hom G-$ shows that $\Hom G{\wti F^+}$ is exact in degrees $\geq n-1$.
The fact that $F$ is proper provides the exactness in degrees $<n-1$.
\end{proof}

\begin{disc}
\label{disc120116a}
The difference between flat resolutions (in the usual sense) and proper flat resolutions is subtle.
For instance, every $R$-module has a proper flat resolution since $\catf(R)$ is precovering by~\cite{bican:amhfc}.
However, some flat resolutions are proper, and others are not. Moreover, the next example shows that
even bounded flat resolutions need not be proper. On the other hand, Lemma~\ref{lem111231a} shows that
the classical flat dimension of $N$ is the same as $\fd_R(N)$, that is, the homological dimension
defined using flat resolutions (in the usual sense) is the same as 
the homological dimension
defined using proper flat resolutions. See also Proposition~\ref{lem111230a}.
Of course, these subtleties to not come up for $\pd$ and $\id$
since projective resolutions and injective coresolutions are automatically proper.
\end{disc}

\begin{ex}
\label{ex120116a}
Assume that $(R,m,k)$ is a local, non-complete, Gorenstein domain such that $\dim(R)=1$.
For instance, we can take $R=\bbz_{(p)}$ or $k[X]_{(X)}$ where $k$ is a field.
The augmented minimal injective resolution of $R$ (over itself) has the form
$$X=(0\to R\to Q\xra{\alpha} E\to 0)$$
where $Q=Q(R)$ is the field of fractions of $R$ and $E=E_R(k)$ is the injective hull of $k$.
This is also an augmented flat resolution of $E$, in the usual sense. To show that this flat resolution
is not proper, we show that $\Hom{\comp R}{X}$ is not exact 
$$\Hom{\comp R}{X}=(0\to \Hom{\comp R}{R}\to \Hom{\comp R}{Q}\xra{\Hom{\comp R}{\alpha}} \Hom{\comp R}{E}\to 0)$$
where $\comp R$ is the $\m$-adic completion
of $R$. (This suffices since $\comp R$ is flat over $R$.)
In fact, the right-most homology module in this complex is
$$\coker(\Hom{\comp R}{\alpha})=\Ext 1{\comp R}{R}$$
which is non-zero by~\cite[Main Theorem 2.5]{frankild:amsveem}.
See also~\cite[Propositions 4.2 and 4.5]{anderson:neams}
for specific computations of $\Ext 1{\comp R}{R}$.
\end{ex}

\subsection*{Semidualizing Modules and Relative Homological Algebra}

\

Semidualizing modules, defined next, form the basis for our categories of interest.
These objects go back at least to Vasconcelos~\cite{vasconcelos:dtmc}, but were rediscovered by others.

\begin{defn}
A finitely generated $R$-module $C$ is \textit{semidualizing} if the natural
``homothety morphism'' $R \to \Hom CC$ is an isomorphism and $\Ext {i} CC = 0$
for $i\geq 1$.
An $R$-module $D$ is  \emph{dualizing} if it is semidualizing and has finite injective
dimension.

Let $C$ be a semidualizing $R$-module. We set 
\begin{align*}
\catpc(R)
&= \text{the subcategory of modules $M\cong \Otimes{P}{C}$ for some $P\in\catp(R)$}
\\
\catfc(R)
&=\text{the subcategory of modules $M\cong \Otimes{F}{C}$ for some $F\in\catf(R)$}
\\
\catic(R)
&=\text{the subcategory of modules $M\cong \Hom CI$ for some $I\in\cati(R)$.}
\end{align*}
The $R$-modules in $\catpc(R)$, $\catfc(R)$ and $\catic(R)$ are called $C$-\textit{projective},
$C$-\textit{flat} and $C$-\textit{injective}, respectively.
\end{defn}

\begin{disc}\label{disc111219b}
Let $C$ be a semidualizing $R$-module.
In \cite {holm:fear} Holm and White prove that the classes $\mathcal{P}_{C} (R)$ and
$\mathcal{F}_{C} (R)$ are closed under coproducts and summands and the class
$\mathcal{I}_{C} (R)$ is closed under products and summands. Also, they proved
that the classes $\mathcal{P}_{C} (R)$ and $\mathcal{F}_{C} (R)$ are precovering, and 
the class 
$\mathcal{I}_{C}(R)$ is preenveloping.
Since $R$ is noetherian and $C$ is finitely generated,
it is straightforward to show that the class $\catfc(R)$ is closed under products,
 and $\catic(R)$ is closed under coproducts.
\end{disc}

\begin{disc}\label{disc111219bx}
Let $C$ be a semidualizing $R$-module.
Then $C$ 
is cyclic if and only if it is free, if and only if $C\cong R$. Similarly, $\pd_R(C)<\infty$
if and only if $C$ is projective (necessarily of rank 1). 
If $R$ is Gorenstein and local, then $C\cong R$.
If $R\to S$ is a flat ring homomorphism, then $\Otimes SC$ is a semidualizing $S$-module.
In the local setting, these facts are discussed in~\cite[Section 1]{sather:bnsc}.
For the non-local case, see~\cite[Chapter 2]{sather:sdm}.
\end{disc}

The next classes were also introduced by Vasconcelos~\cite{vasconcelos:dtmc}.

\begin{defn}\label{defn111219b}
Let $C$ be a semidualizing $R$-module.
The \emph{Auslander class} with respect to $C$
is the class $\mathcal{A}_{C}(R)$ of $R$-modules $M$ such that:
\begin{itemize}
\item [$(i)$] $\Tor i {C}{M} =0 = \Ext i{C}{\Otimes CM}$ for all $i\geq 1$, and
\item  [$(ii)$] the natural map $M \to \Hom {C}{\Otimes  CM}$
 is an isomorphism.
\end{itemize}
The \emph{Bass class} with respect to $C$ is the class $\mathcal{B}_{C} (R)$ of $R$-modules $M$
such that:
\begin{itemize}
\item [$(i)$]  $\Ext i{C}{M} = 0= \Tor i {C}{\Hom {C}{M}}$ for all $i \geq 1$, and
\item [$(ii)$] the natural evaluation map $\Otimes{C}{\Hom CM}\xra{\xi^C_M} M$
 is an isomorphism.
\end{itemize}
\end{defn}

\begin{disc}\label{disc111220a}
Let $C$ be a semidualizing $R$-module.
The classes $\catac(R)$ and $\catbc(R)$ satisfy the ``two-of-three property'':
given an exact sequence $0\to M_1\to M_2\to M_3\to 0$ of $R$-module homomorphisms,
if two of the $M_i$ are in $\catac(R)$ or in $\catbc(R)$, then so is the third $M_i$;
see~\cite[Corollary 6.3]{holm:fear}.

The class $\catac (R)$ contains all $R$-modules of finite flat dimension and
all modules of finite $\catic$-injective dimension. The Bass class
$\catbc (R)$ contains all $R$-modules of finite injective dimension
and all modules $M$ such that
there is an exact sequence
$$0\to L_n\to\cdots\to L_0\to M\to 0$$ such that each $L_i\in\catfc(R)$;
hence all modules of finite $\catpc$-projective dimension.
(See~\cite[Corollary 6.1]{holm:fear} and~\cite[1.9]{takahashi:hasm}).\footnote{Note that there seems to be a bit of ambiguity 
in~\cite[Corollary 6.1]{holm:fear}.
Before~\cite[1.3]{holm:fear} the authors state that all resolutions are defined by precovers.
In~\cite[1.3]{holm:fear}, the authors define proper resolutions in terms of precovers, but  in~\cite[1.4]{holm:fear}
they define $\xpd$ in terms of $\catx$-resolutions, with no mention of properness.
Then in~\cite[Corollary 6.1]{holm:fear}, the authors are clearly assuming that their bounded augmented resolutions are exact.
For $\pcpd$ and $\icid$, this is covered in~\cite[Corollary 2.10]{takahashi:hasm}, which we recall in Fact~\ref{fact111230a}.
However, $\fcpd$ is not covered there, at least not explicitly. We take care of this in Proposition~\ref{lem111230a}.}
See also Proposition~\ref{lem111230a}.

Foxby equivalence~\cite[Theorem 2.8]{takahashi:hasm} states the following:
\begin{enumerate}[(a)]
\item An $R$-module $M$ is in $\catbc(R)$ if and only if $\Hom CM\in\catac(R)$.
\item An $R$-module $M$ is in $\catac(R)$ if and only if $\Otimes CM\in\catbc(R)$.
\end{enumerate}

The Auslander and Bass classes for $C=R$ are trivial: $\catb_R(R)=\catm(R)=\cata_R(R)$.
\end{disc}

The next two results are for use in the proofs of Propositions~\ref{prop110730b} and~\ref{lem111230a}.

\begin{lem}\label{lem110730a}
Let $C$ be a semidualizing $R$-module,
and let $X$, $Y$ be  $R$-complexes  such that $X_i,Y_i\in\catac(R)$ for each index $i$.
Assume that $X$ and $Y$ are both either bounded above or bounded below.
\begin{enumerate}[\rm(a)]
\item\label{lem110730a1}
If $X$ is exact, then so is $\Otimes C X$.
\item\label{lem110730a2}
If $f\colon X\to Y$ is a quasiisomorphism, then so is $\Otimes Cf\colon \Otimes CX\to \Otimes CY$.
\item\label{lem110730a3}
If $X$ and $Y$ are quasiisomorphic and bounded below, then so are $\Otimes CX$ and $\Otimes CY$.
\end{enumerate}
\end{lem}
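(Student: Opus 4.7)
The plan is to prove the three parts in sequence, bootstrapping \eqref{lem110730a1} to obtain \eqref{lem110730a2} and then \eqref{lem110730a2} to obtain \eqref{lem110730a3}. For \eqref{lem110730a1}, I will treat the bounded below case (the bounded above case is dual). Let $Z_i = \ker(\partial_i^X)$; exactness of $X$ yields short exact sequences $0 \to Z_i \to X_i \to Z_{i-1} \to 0$. Starting from $Z_{N-1} = 0$, where $N$ is chosen so that $X_i = 0$ for $i < N$, and using $X_i \in \catac(R)$ throughout, the two-of-three property of $\catac(R)$ recorded in Remark~\ref{disc111220a} gives $Z_i \in \catac(R)$ for every $i$ by induction. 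The vanishing of $\Tor{1}{C}{Z_{i-1}}$ then makes tensoring with $C$ preserve each short exact sequence, so the resulting short exact sequences $0 \to \Otimes C{Z_i} \to \Otimes C{X_i} \to \Otimes C{Z_{i-1}} \to 0$ splice together to show that $\Otimes C X$ is exact.

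For \eqref{lem110730a2}, I apply \eqref{lem110730a1} to the mapping cone $\cone(f)$, which is exact precisely because $f$ is a quasiisomorphism. Its degree-$n$ component $Y_n \oplus X_{n-1}$ lies in $\catac(R)$ (the class is closed under finite direct sums, immediate from the two-of-three property applied to the split short exact sequence), and boundedness above or below is inherited from the boundedness of $X$ and $Y$. Therefore $\Otimes C{\cone(f)}$ is exact; identifying it canonically with $\cone(\Otimes C f)$ shows that $\Otimes C f$ is a quasiisomorphism.

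For \eqref{lem110730a3}, the main obstacle is that the zigzag $X \from Y^1 \to \cdots \to Y$ realizing the quasiisomorphism may pass through complexes whose components are not in $\catac(R)$, which blocks a direct step-by-step application of \eqref{lem110730a2}. The workaround will be to construct a single bridge complex: since $X$ is bounded below, it admits a classical bounded below projective resolution $P \to X$, and projective modules lie in $\catac(R)$ because they are flat. By the standard lifting property of bounded below complexes of projectives, the chain quasiisomorphism $P \to X$ can be propagated through each arrow of the zigzag to produce a chain quasiisomorphism $P \to Y$. Applying \eqref{lem110730a2} to each of $P \to X$ and $P \to Y$ yields chain quasiisomorphisms $\Otimes C P \to \Otimes C X$ and $\Otimes C P \to \Otimes C Y$, so $\Otimes C X$ and $\Otimes C Y$ are quasiisomorphic via the two-step zigzag $\Otimes C X \from \Otimes C P \to \Otimes C Y$.
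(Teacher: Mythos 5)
Your proof is correct and follows essentially the same route as the paper: part~\eqref{lem110730a1} by splicing short exact sequences of kernels and using the two-of-three property of $\catac(R)$ together with $\Tor 1C-$-vanishing, part~\eqref{lem110730a2} via the mapping cone, and part~\eqref{lem110730a3} via a single bounded below complex of projectives mapping quasiisomorphically to both $X$ and $Y$. You simply supply more detail (the induction on kernels, the lifting through the zigzag) than the paper's terse argument.
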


\begin{proof}
\eqref{lem110730a1}
The result holds if $X$ is a short exact sequence, since $\Tor 1C{X_i}=0$ for each $i$.
The general result follows by breaking $X$ into short exact sequences.
Note that this uses the two-of-three property for $\catac(R)$
from Remark~\ref{disc111220a}.

\eqref{lem110730a2}
This follows by applying part~\eqref{lem110730a1} to the mapping cone of $f$.

\eqref{lem110730a3}
This follows from part~\eqref{lem110730a2}, since there are quasiisomorphisms
$f\colon P\to X$ and $g\colon P\to Y$
for some bounded below complex $P$ of projective $R$-modules.
Note that this uses the fact that every projective $R$-module is in $\catac(R)$;
see Remark~\ref{disc111220a}.
\end{proof}

Similarly, we have the following.

\begin{lem}\label{lem110730aa}
Let $C$ be a semidualizing $R$-module,
and let $X$, $Y$ be  $R$-complexes  such that $X_i,Y_i\in\catbc(R)$ for each index $i$.
Assume that $X$ and $Y$ are both either bounded above or bounded below.
\begin{enumerate}[\rm(a)]
\item\label{lem110730aa1}
If $X$ is exact, then so is $\Hom C X$.
\item\label{lem110730aa2}
If $f\colon X\to Y$ is a quasiisomorphism, then so is $\Hom Cf\colon \Hom CX\to \Hom CY$.
\item\label{lem110730aa3}
If $X$ and $Y$ are quasiisomorphic and bounded above, then so are $\Hom CX$ and $\Hom CY$.
\end{enumerate}
\end{lem}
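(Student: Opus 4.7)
The plan is to mirror the proof of Lemma~\ref{lem110730a} throughout, swapping $\Otimes C-$ for $\Hom C-$, $\catac(R)$ for $\catbc(R)$, and the $\Tor$-vanishing for the vanishing $\Ext 1CM=0$ that characterizes the Bass class. For part~\eqref{lem110730aa1}, I would first check the claim when $X$ is a short exact sequence $0\to X_2\to X_1\to X_0\to 0$ of modules in $\catbc(R)$: the condition $\Ext 1C{X_2}=0$ makes the associated long exact sequence collapse to a short exact sequence, so $\Hom CX$ is exact. For a general exact, bounded-above or bounded-below complex $X$, I would split $X$ into the short exact sequences $0\to Z_{n+1}\to X_n\to Z_n\to 0$ coming from successive kernels. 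Beginning at the trivial end of $X$, the two-of-three property for $\catbc(R)$ recorded in Remark~\ref{disc111220a} inductively places each syzygy $Z_n$ in $\catbc(R)$, so the short-exact-sequence case applies to each piece; splicing the resulting short exact sequences of $\Hom$'s then recovers the exactness of $\Hom CX$.

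For part~\eqref{lem110730aa2}, I would apply part~\eqref{lem110730aa1} to the mapping cone of $f$. Since $f$ is a quasiisomorphism, $\cone(f)$ is exact, and each term $\cone(f)_n=Y_n\oplus X_{n-1}$ is again in $\catbc(R)$ because that class is closed under finite direct sums (directly from the defining conditions, or via two-of-three); the cone also inherits the common boundedness of $X$ and $Y$. Part~\eqref{lem110730aa1} therefore shows $\Hom C{\cone(f)}$ is exact, and since $\Hom C{\cone(f)}\cong\cone(\Hom Cf)$, this gives that $\Hom Cf$ is a quasiisomorphism.

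For part~\eqref{lem110730aa3}, I would appeal to the dual of the fact used in Lemma~\ref{lem110730a}\eqref{lem110730a3}: every bounded-above $R$-complex admits a quasiisomorphism to a bounded-above complex of injective $R$-modules. Combined with $X\simeq Y$ in the derived category, this produces a single bounded-above injective complex $I$ together with quasiisomorphisms $X\to I$ and $Y\to I$. Since injective modules lie in $\catbc(R)$ by Remark~\ref{disc111220a}, part~\eqref{lem110730aa2} applied to each map yields quasiisomorphisms $\Hom CX\res\Hom CI$ and $\Hom CY\res\Hom CI$, giving the result. The main obstacle will be producing this common injective coresolution cleanly: the direct dual of the ``bounded-below common projective resolution'' used in Lemma~\ref{lem110730a}\eqref{lem110730a3} requires either invoking the homotopy-theoretic machinery of $K$-injective resolutions or, more elementarily, processing a zigzag of quasiisomorphisms between $X$ and $Y$ by taking a bounded-above injective resolution of each intermediate stage and applying part~\eqref{lem110730aa2} finitely many times.
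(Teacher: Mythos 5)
Your proposal is correct and is exactly the argument the paper intends: the paper omits the proof of this lemma with the word ``Similarly,'' and your dualization of Lemma~\ref{lem110730a} --- short exact sequences plus $\Ext 1C-$-vanishing and two-of-three for $\catbc(R)$ in part~\eqref{lem110730aa1}, the mapping cone in part~\eqref{lem110730aa2}, and a common bounded-above injective complex (whose terms lie in $\catbc(R)$ by Remark~\ref{disc111220a}) in part~\eqref{lem110730aa3} --- is precisely that intended argument. The closing remark about constructing the common injective coresolution is a reasonable caution, but the zigzag workaround you sketch disposes of it.
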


Next, we recall some results  from~\cite[Corollary 2.10 and Theorem 2.11]{takahashi:hasm}.
It compares directly to Proposition~\ref{lem111230a}.

\begin{fact}\label{fact111230a}
Let $C$ be a semidualizing $R$-module, and let $M$ be an $R$-module. 
\begin{enumerate}[(a)]
\item \label{fact111230a1}
One has $\pcpd_R(M)\leq n$ if and only if there is an exact sequence
$$0\to L_n\to\cdots\to L_0\to M\to 0$$ such that each $L_i\in\catpc(R)$.
\item \label{fact111230a2}
One has $\icid_R(M)\leq n$ if and only if there is an exact sequence
$$0\to M\to J^0\to\cdots\to J^n\to 0$$
such that each $J^i\in\catic(R)$.
\item \label{fact111230a3}
$\pcpd_R(M)=\pd_R(\Hom CM)$.
\item \label{fact111230a4}
$\icid_R(M)=\id_R(\Otimes CM)$.
\item \label{fact111230a5}
$\pcpd_R(\Otimes CM)=\pd_R(M)$.
\item \label{fact111230a6}
$\icid_R(\Hom CM)=\id_R(M)$.
\end{enumerate}
\end{fact}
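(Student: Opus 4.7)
The plan is to derive all six parts from Foxby equivalence (Remark~\ref{disc111220a}): the inverse functors $\Otimes{C}{-}\colon\catac(R)\to\catbc(R)$ and $\Hom{C}{-}\colon\catbc(R)\to\catac(R)$ identify projective modules (which lie in $\catac(R)$ since they have finite flat dimension) with $C$-projective modules in $\catbc(R)$, and dually identify injectives with $C$-injectives. The technical engine is the pair of functor lemmas Lemma~\ref{lem110730a} and Lemma~\ref{lem110730aa}, which preserve exactness and quasiisomorphism on bounded complexes with terms in the relevant class, together with the adjoint isomorphism $\Hom{\Otimes{C}{Q}}{\Otimes{C}{P}}\cong\Hom{Q}{P}$ for projective $P, Q$ (arising from $\Hom{C}{C}\cong R$).

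For~\eqref{fact111230a3}, given a proper $\catpc$-resolution $L$ of $M$ of length $\le n$, I would apply $\Hom{C}{-}$. Since $C\cong\Otimes{C}{R}\in\catpc(R)$, the augmented complex $L^+$ is $\Hom{C}{-}$-exact, so $\Hom{C}{L^+}$ is exact; because each $L_i\cong\Otimes{C}{P_i}\in\catbc(R)$ satisfies $\Hom{C}{L_i}\cong P_i$, this produces a bounded projective resolution of $\Hom{C}{M}$ of length $\le n$. Conversely, starting from a projective resolution $P$ of $\Hom{C}{M}$ of length $\le n$ and assuming $M\in\catbc(R)$ (so $\Hom{C}{M}\in\catac(R)$), Lemma~\ref{lem110730a} together with the vanishing $\Tor{i}{C}{\Hom{C}{M}}=0$ yields an exact $\catpc$-resolution of $\Otimes{C}{\Hom{C}{M}}\cong M$ of the same length; properness follows from the adjoint isomorphism above. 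Parts~\eqref{fact111230a5},~\eqref{fact111230a4}, and~\eqref{fact111230a6} then follow by substituting $M\mapsto\Otimes{C}{M}$ into~\eqref{fact111230a3} and $M\mapsto\Hom{C}{M}$ into~\eqref{fact111230a4}, invoking the Foxby identifications and using Remark~\ref{disc111220a} to place modules of finite $\pd$ in $\catac(R)$ and modules of finite $\id$ in $\catbc(R)$.

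Parts~\eqref{fact111230a1} and~\eqref{fact111230a2} are the delicate ones. The ``if'' direction of~\eqref{fact111230a1} is immediate: the given exact sequence places $M$ in $\catbc(R)$ by Remark~\ref{disc111220a} (using $\catpc(R)\subseteq\catfc(R)$), applying $\Hom{C}{-}$ via Lemma~\ref{lem110730aa} produces a bounded projective resolution of $\Hom{C}{M}$, and~\eqref{fact111230a3} concludes. The ``only if'' direction is where I expect the main obstacle: a proper $\catpc$-resolution $L^+$ is only $\Hom{\catpc}{-}$-exact by definition, not necessarily acyclic, so it must be upgraded to a genuinely exact bounded sequence. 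The strategy is to prove the auxiliary claim that $\pcpd_R(M)<\infty$ already forces $M\in\catbc(R)$; granting this, \eqref{fact111230a3} yields a bounded projective resolution of $\Hom{C}{M}$ of length $\le n$ and tensoring with $C$ as above produces the exact sequence of~\eqref{fact111230a1}. To prove the auxiliary claim, applying $\Hom{C}{-}$ to $L^+$ gives a bounded projective resolution of $\Hom{C}{M}$ (as in~\eqref{fact111230a3}), so $\Hom{C}{M}\in\catac(R)$ and $\Otimes{C}{\Hom{C}{M}}\in\catbc(R)$; a diagram chase comparing $L^+$ to the exact sequence $\Otimes{C}{\Hom{C}{L^+}}$ (exact by Lemma~\ref{lem110730a}) through the counit $\xi^C$ identifies $\ker\xi^C_M$ with the failure of $L^+$ to be exact at $M$, and combined with the triangle identities of the Hom-tensor adjunction and the long exact sequence of $\operatorname{Ext}^*_R(C,-)$ forces $\xi^C_M$ to be an isomorphism; hence $M\cong\Otimes{C}{\Hom{C}{M}}\in\catbc(R)$. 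Part~\eqref{fact111230a2} is dual, using Lemma~\ref{lem110730aa} and the unit of the adjunction in place of Lemma~\ref{lem110730a} and the counit.
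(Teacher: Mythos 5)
The paper does not actually prove this statement: it is recalled verbatim from Takahashi--White \cite[Corollary 2.10 and Theorem 2.11]{takahashi:hasm}, so there is no internal argument to match. The closest internal comparison is the paper's own treatment of the $\catfc$-analogues in Lemma~\ref{lem120121c} and Proposition~\ref{lem111230a}, and your architecture --- transport everything through $\Hom C-$ and $\Otimes C-$ via Lemma~\ref{lem111220a}, use Lemmas~\ref{lem110730a} and~\ref{lem110730aa} to preserve exactness, and use Foxby equivalence to get into $\catbc(R)$ --- is exactly the architecture used there. One simplification you are missing: for the converse of~\eqref{fact111230a3} you do not need $M\in\catbc(R)$ or any exactness. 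Proper resolutions need not be exact, and Lemma~\ref{lem111220a}\eqref{lem111220a3} says that $\Otimes CP$ is a proper $\catpc$-resolution of $M$ for an \emph{arbitrary} module $M$; so a bounded projective resolution $P$ of $\Hom CM$ yields $\pcpd_R(M)\leq n$ immediately. The Bass-class input is only needed for the exactness assertion in~\eqref{fact111230a1}.

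The one step that does not hold up as written is your proof of the auxiliary claim that $\pcpd_R(M)<\infty$ forces $M\in\catbc(R)$. Having shown $\Hom CM\in\catac(R)$, you should simply invoke Remark~\ref{disc111220a}(a): Foxby equivalence as recalled there states that $M\in\catbc(R)$ \emph{if and only if} $\Hom CM\in\catac(R)$, and this ``only if $\Rightarrow$ if'' direction is precisely how the paper's Lemma~\ref{lem120121c} handles the parallel $\catfc$-statement. Your substitute --- a diagram chase through the counit to show $\xi^C_M$ is an isomorphism --- is both harder than necessary and incomplete as sketched. Comparing $L^+$ with the exact complex $\Otimes C{\Hom C{L^+}}$ only shows that $\im(L_0\to M)=\im(\xi^C_M)$ and that $L^+$ is exact at $L_0$ iff $\xi^C_M$ is injective; since $L^+$ is only known to be $\Hom{\catpc}{-}$-exact, nothing in the chase forces $\xi^C_M$ to be surjective, and the triangle identities only give that $\Hom C{\xi^C_M}$ is an isomorphism, from which $\ker\xi^C_M$ and $\coker\xi^C_M$ are merely modules killed by $\Hom C-$ in low degrees, not zero. (You also never address the condition $\Ext iCM=0$ for $i\geq 1$ required for $M\in\catbc(R)$; that does follow once $\xi^C_M$ is an isomorphism, via Remark~\ref{disc111220a}(b), but it should be said.) With the appeal to Remark~\ref{disc111220a}(a) in place of the chase, the rest of your argument --- including the deduction of \eqref{fact111230a5}, \eqref{fact111230a4}, \eqref{fact111230a6} by substitution plus the two-directional Foxby equivalence to handle the infinite-dimension cases --- goes through.
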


The following functors are studied in~\cite{sather:crct,takahashi:hasm}.
We work with them in Propositions~\ref{thm111230a}--\ref{thm111230c},
and use them in the proof of Theorem~\ref{prop110811d}.

\begin{defn}\label{defn111230a}
Let $C$ be a semidualizing $R$-module, and
let $M$ and $N$ be  $R$-modules.
Let $L$ be a proper $\catpc$-resolution of $M$, and let $J$ be a proper $\catic$-coresolution of $N$.
For each $i$, set
\begin{align*}
\extpcm iMN
&:=\HH_{-i}(\Hom LN)
\\
\extmic iMN
&:=\HH_{-i}(\Hom MJ).
\end{align*}
\end{defn}

\begin{fact}\label{fact110811d}
Let $C$ be a semidualizing $R$-module, and
let $M$  be an $R$-module.
Given an integer $n\geq 0$, we know from~\cite[Theorem 3.2(b)]{takahashi:hasm}
that the following conditions are equivalent:
\begin{enumerate}[\rm(i)]
\item \label{fact110811d1}
$\extmic{i} -M = 0$ for all $i>n$;
\item \label{fact110811d2}
$\extmic{n+1} -M = 0$; and
\item \label{fact110811d3}
$\icid_R(M)\leq n$.
\end{enumerate}
\end{fact}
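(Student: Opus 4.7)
The plan is to prove the implications $\eqref{fact110811d3} \Rightarrow \eqref{fact110811d1} \Rightarrow \eqref{fact110811d2} \Rightarrow \eqref{fact110811d3}$. The first two are routine: for $\eqref{fact110811d3} \Rightarrow \eqref{fact110811d1}$, Fact~\ref{fact111230a}\eqref{fact111230a2} supplies an exact sequence $0 \to M \to J^0 \to \cdots \to J^n \to 0$ with each $J^i \in \catic(R)$. One first checks this is a proper $\catic$-coresolution by writing $J^i \cong \Hom{C}{I^i}$ for injective $I^i$ and reducing $\Hom{X}{-}$-exactness for $X \in \catic(R)$ to the injectivity of each $I^i$ via Hom-tensor adjunction; then $\Hom{N}{J^{\bullet}}$ is concentrated in cohomological degrees $[0,n]$, giving $\extmic iNM = 0$ for all $i > n$. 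The implication $\eqref{fact110811d1} \Rightarrow \eqref{fact110811d2}$ is immediate.

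For $\eqref{fact110811d2} \Rightarrow \eqref{fact110811d3}$, the strategy is to produce a bounded exact $\catic$-coresolution of $M$ of length $\leq n$ and then apply Fact~\ref{fact111230a}\eqref{fact111230a2}. Begin with any proper $\catic$-coresolution $0 \to M \to J^0 \to J^1 \to \cdots$ (existence via Remark~\ref{disc111219b}). Since $\catic(R) \subseteq \catbc(R)$ and $M$ embeds in $J^0$, iterated use of the two-of-three property (Remark~\ref{disc111220a}) places $M$ and every cosyzygy in $\catbc(R)$, and the Foxby transform $\Hom{C}{-}$ identifies the proper coresolution with an honest injective coresolution of $\Hom{C}{M}$ in $\catac(R)$, so that the augmented proper coresolution is in fact exact. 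Setting $K = \coker(J^{n-1} \to J^n)$ yields the exact sequence $0 \to M \to J^0 \to \cdots \to J^{n-1} \to K \to 0$, and it remains to show $K \in \catic(R)$. By the Foxby characterization $\catic(R) = \{X \in \catbc(R) : \Hom{C}{X} \in \cati(R)\}$, Bass-class membership of $K$ follows from the two-of-three property applied to the truncated sequence, so the real content is the injectivity of $\Hom{C}{K}$.

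The hypothesis $\extmic{n+1}{-}M = 0$ combined with dimension-shifting along the proper coresolution yields $\extmic{1}{-}K = 0$ as a functor on all $R$-modules. Transporting this vanishing through the equivalence $\Hom{C}{-}$, one obtains $\Ext 1{N'}{\Hom{C}{K}} = 0$ for a sufficiently large class of $R$-modules $N'$, and Baer's criterion then forces $\Hom{C}{K}$ to be injective. The main obstacle is the bookkeeping in this final step: making rigorous the identification $\extmic 1NK \cong \Ext 1{\Otimes C N}{\Hom{C}{K}}$ (or an appropriate adjoint variant) that bridges the relative homology computed via proper $\catic$-coresolutions with the absolute $\Ext$ on the $\catac/\catbc$-side. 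This is where Foxby equivalence is used in its strongest form; once it is in place, the argument reduces to the classical characterization of injective dimension via $\Ext^1$-vanishing.
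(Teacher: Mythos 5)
The paper offers no proof of this statement---it is quoted verbatim from \cite[Theorem 3.2(b)]{takahashi:hasm}---so the only question is whether your argument stands on its own, and the implication \eqref{fact110811d2}$\implies$\eqref{fact110811d3} does not. The first gap is your claim that the augmented proper $\catic$-coresolution $J^+$ of an arbitrary $M$ is exact. Augmented proper resolutions need not be exact (Remark~\ref{disc111219a}), and here the failure is quantifiable: for the standard proper coresolution $J=\Hom CI$ built from an injective coresolution $I$ of $\Otimes CM$, the homology of $J^+$ in degree $j\geq 1$ is $\Ext jC{\Otimes CM}$, while in degrees $0$ and $-1$ it measures the failure of bijectivity of the biduality map $M\to\Hom C{\Otimes CM}$. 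Thus exactness of $J^+$ already encodes most of the condition $M\in\catac(R)$, which is a \emph{consequence} of \eqref{fact110811d3} rather than something available at the outset. Your derivation of it is circular: the two-of-three property applies only to short exact sequences, and the short sequences you would extract from $J^+$ are exact only if $J^+$ is; moreover $\catic(R)\subseteq\catac(R)$, not $\catbc(R)$, and the preenvelope $M\to J^0$ need not be injective. You would first have to prove directly that hypothesis \eqref{fact110811d2} forces $M\in\catac(R)$, or else run the dimension shift without assuming exactness.

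The second gap is the step you yourself flag as ``the main obstacle,'' which remains unresolved and whose proposed resolution points in the wrong direction. The relevant dictionary (from \cite[Theorem 4.1]{takahashi:hasm}, used in this paper in the proof of Proposition~\ref{thm111230b}) is $\extmic 1NK\cong\Ext 1{\Otimes CN}{\Otimes CK}$, and membership $K\in\catic(R)$ amounts, for $K\in\catac(R)$, to injectivity of $\Otimes CK$---not of $\Hom CK$, which is the Bass-class-side transform. More seriously, the hypothesis only yields $\Ext 1{\Otimes CN}{\Otimes CK}=0$ for test modules in the essential image of $\Otimes C-$, whereas Baer's criterion requires $\Ext 1{R/\fa}{\Otimes CK}=0$ for every ideal $\fa$; a cyclic module $R/\fa$ is not of the form $\Otimes CN$ in general, so the vanishing you have does not feed into Baer's criterion. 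Bridging exactly this gap is the content of Takahashi and White's argument, which is why the paper simply cites it. By contrast, your direction \eqref{fact110811d3}$\implies$\eqref{fact110811d1} is essentially sound, modulo a variance slip: properness of a coresolution is $\Hom-\catic(R)$-exactness, and the adjunction computation $\Hom{J^+}{\Hom CI}\cong\Hom{\Otimes C{J^+}}{I}$ reduces the claim to exactness of $\Otimes C{J^+}$ (via Lemma~\ref{lem110730a}\eqref{lem110730a1}, since all terms of the bounded exact complex $J^+$ lie in $\catac(R)$ by two-of-three) together with injectivity of the test module $I$, rather than ``injectivity of each $I^i$.''
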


\subsection*{Relations Between Semidualizing Modules}

\

Over a local ring, the ``isomorphism'' relation on the class of semidualizing
modules is pretty good at distinguishing between semidualizing modules
with different properties. For instance, if  $B$ and $C$ are semidualizing modules
over a local ring, then
$\catbc(R)=\catbb(R)$ if and only if $C\cong B$;
see~\cite{frankild:rbsc} for this result and other similar results.
On the other hand, when $R$ is not local, one has to work a bit harder to distinguish
between homologically similar semidualizing modules. 
The following discussion is also from~\cite{frankild:rbsc}.

\begin{defn}\label{defn111224a}
Let $\Pic(R)$ denote the Picard group of $R$.
The elements of $\Pic(R)$ are the isomorphism classes $[P]$ of 
finitely generated rank 1 projective $R$-modules $P$, that is, the finitely
generated projective $R$-modules $P$ such that $P_{\m}\cong R_{\m}$ for
all maximal (equivalently, for all prime) ideals $\m\subset R$.
The group structure on $\Pic(R)$ is given by tensor product
$[P][Q]=[\Otimes PQ]$, and the identity in $\Pic(R)$ is $[R]$. Inverses
are given by duality $[P]^{-1}=[\Hom PR]$, and similarly for division:
$[P]^{-1}[Q]=[\Hom PQ]$.

Let $\s_0(R)$ denote the set of isomorphism classes $[C]$ of semidualizing $R$-modules.
\end{defn}

\begin{fact}\label{fact111224a}
Let $M$ be an $R$-module. Then $M$ is a finitely generated rank 1 projective $R$-module
if and only if $M$ is a semidualizing $R$-module of finite projective dimension,
by~\cite[Remark 4.7]{frankild:rbsc}. So we have $\Pic(R)\subseteq\s_0(R)$.
Also, there is an action of $\Pic(R)$ on $\s_0(R)$ given by $[P][C]=[\Otimes PC]$.
\end{fact}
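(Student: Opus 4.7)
The plan is to establish the three claims of Fact~\ref{fact111224a} in sequence, using localization together with tensor-Hom manipulations.

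For the forward direction of the biconditional, if $P$ is a finitely generated rank one projective $R$-module, then $\pd_R(P)=0$, and both semidualizing conditions---vanishing of $\Ext{\geq 1}{P}{P}$ and bijectivity of the homothety $R\to\Hom PP$---are local, so they follow from $P_{\fp}\cong R_{\fp}$ at each prime $\fp$. For the reverse direction, assume $M$ is semidualizing with $\pd_R(M)<\infty$. Localizing at an arbitrary maximal ideal $\m$ preserves both conditions, so it suffices to handle the local case. Here I would combine the standard identity $\depth_R(M)=\depth(R)$ for semidualizing modules with the Auslander-Buchsbaum formula to obtain $\pd_R(M)=0$; thus $M\cong R^n$ for some $n\geq 1$. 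The homothety into $\Hom{R^n}{R^n}\cong R^{n^2}$ is an isomorphism only when $n=1$. Globalizing, $M$ is finitely generated and locally free of rank one, hence projective of rank one.

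The containment $\Pic(R)\subseteq\s_0(R)$ is an immediate corollary. To set up the action, I would verify that $\Otimes PC$ is semidualizing whenever $P\in\Pic(R)$ and $[C]\in\s_0(R)$. The key computation
\[
\Hom{\Otimes PC}{\Otimes PC}\cong \Hom P{\Hom C{\Otimes PC}}\cong \Hom P{\Otimes P{\Hom CC}}\cong \Hom PP\cong R
\]
uses tensor-Hom adjointness, tensor evaluation (valid since $P$ is flat and $C$ is finitely presented over noetherian $R$), $\Hom CC\cong R$, and $\Hom PP\cong R$ from the forward direction just proved. For Ext vanishing in positive degrees, a projective resolution $F\to C$ yields a projective resolution $\Otimes PF\to\Otimes PC$, and the identification $\Hom P{\Otimes PC}\cong \Otimes{P^{\vee}\otimes P}C\cong C$ (with $P^{\vee}:=\Hom PR$) together with adjunction shows that $\Hom{\Otimes PF}{\Otimes PC}\cong \Hom FC$ as complexes, so $\Ext{i}{\Otimes PC}{\Otimes PC}\cong \Ext{i}{C}{C}=0$ for $i\geq 1$.

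Finally, the action axioms $[R][C]=[C]$ and $[\Otimes PQ][C]=[P]([Q][C])$ reduce to unitality and associativity of the tensor product. The principal obstacle is the reverse direction of the biconditional, which relies on the structural input $\depth_R(M)=\depth(R)$ for semidualizing $M$ over a local ring; once that is available, the rest of the argument is formal manipulation with localization, adjointness, and tensor evaluation.
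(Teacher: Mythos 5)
Your argument is correct, and it is worth noting that the paper does not actually prove this statement: it is recorded as a Fact with a citation to \cite[Remark 4.7]{frankild:rbsc}, so you are supplying a proof where the authors supply a reference. Your route is the standard one. The forward direction by localization is fine, since $\Ext iMM$ and the homothety map commute with localization for finitely generated modules over a noetherian ring. For the reverse direction, the only genuinely nontrivial input is the identity $\depth_R(M)=\depth(R)$ for a semidualizing module over a local ring, which you correctly flag as external; note that the conclusion you derive from it (finite projective dimension forces $C$ projective of rank one) is already recorded in the paper's Remark~\ref{disc111219bx}, so within this paper you could simply cite that and reserve the depth/Auslander--Buchsbaum argument for a self-contained treatment. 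Two small points of care in the third part. First, your chain of isomorphisms shows $\Hom{\Otimes PC}{\Otimes PC}\cong R$ abstractly, whereas the definition requires the \emph{homothety morphism} to be an isomorphism; since every map in your chain is natural and one checks that the composite sends $\id_{\Otimes PC}$ to $1\in R$, this is fine, but the cleanest verification is again by localization, using $(\Otimes PC)_{\fp}\cong C_{\fp}$ and the fact that being an isomorphism is a local property for maps of finitely generated modules. Second, in the Ext computation you should note that $\Otimes P{F_i}$ is projective and that $\Otimes P{(-)}$ preserves exactness because $P$ is projective, hence flat; with that said, the identification $\Hom{\Otimes PF}{\Otimes PC}\cong\Hom FC$ is valid and gives $\Ext i{\Otimes PC}{\Otimes PC}\cong\Ext iCC=0$ for $i\geq 1$ as you claim. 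The verification of the action axioms by unitality and associativity of tensor product is routine and correct.
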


\begin{defn}\label{defn111224b}
The equivalence relation defined by the action of $\Pic(R)$ on $\s_0(R)$ is denoted $\approx$:
given $[B],[C]\in\s_0(R)$ we have $[B]\approx[C]$ provided that $[B]$ and $[C]$ are in the same
orbit under $\Pic(R)$, that is, provided that there is an element $[P]\in\Pic(R)$ such
that $C\cong\Otimes PB$.
Write $B\approx C$ when $[B]\approx[C]$.
\end{defn}

\begin{fact}\label{fact111224b}
Given semidualizing $R$-modules $B$ and $C$, the following conditions are equivalent:
\begin{enumerate}[(i)]
\item \label{fact111224b1}
$B\approx C$.
\item \label{fact111224b2}
$B_{\m}\cong C_{\m}$ for
all maximal (equivalently, for all prime) ideals $\m\subset R$.
\item \label{fact111224b3}
$\catbb(R)=\catbc(R)$.
\item \label{fact111224b4}
$\catab(R)=\catac(R)$.
\item \label{fact111224b5}
$B\in\catbc(R)$ and $C\in\catbb(R)$.
\end{enumerate}
See~\cite[Theorems 1.4, Propositions 5.1 and 5.4]{frankild:rbsc}.
\end{fact}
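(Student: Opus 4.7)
The plan is to prove (i) $\Leftrightarrow$ (ii) by direct construction, deduce (iii) and (iv) from (i) via localization, invoke Foxby equivalence for (iii) $\Leftrightarrow$ (iv), observe that (iii) forces (v) trivially, and finally tackle the deepest step (v) $\Rightarrow$ (ii) via a local rigidity argument.

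For (i) $\Rightarrow$ (ii), given $C \cong \Otimes{P}{B}$ with $[P] \in \Pic(R)$, localize at any $\m \in \mspec(R)$: since $P_\m \cong R_\m$, one has $C_\m \cong \Otimes[R_\m]{P_\m}{B_\m} \cong B_\m$. For (ii) $\Rightarrow$ (i), I would take $P := \Hom{B}{C}$ and prove simultaneously that $[P] \in \Pic(R)$ and that the evaluation map $\Otimes{P}{B} \to C$ sending $\varphi \otimes b$ to $\varphi(b)$ is an isomorphism. Both are local claims, and since $B$ is finitely presented the formation of $\Hom{B}{C}$ commutes with localization. Applying (ii) gives $P_\m \cong \Hom[R_\m]{B_\m}{C_\m} \cong \Hom[R_\m]{B_\m}{B_\m} \cong R_\m$ via the homothety isomorphism from the semidualizing property of $B_\m$; the localized evaluation map unwinds to the composite of the canonical isomorphism $\Otimes[R_\m]{R_\m}{B_\m} \cong B_\m$ with the fixed $B_\m \cong C_\m$, hence is itself an isomorphism. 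So $P$ is finitely generated, locally free of rank one, and yields $\Otimes{P}{B} \cong C$.

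For (i) $\Rightarrow$ (iii) and (iv), I would exploit that tensoring with an invertible module $P$ is a self-equivalence of $\catm(R)$ sending $\catbb(R)$ bijectively onto $\catb_{\Otimes{P}{B}}(R) = \catbc(R)$, and likewise for Auslander classes; this follows by combining the standard $\Otimes{P}{-} \dashv \Hom{P}{-}$ adjunction with the fact that $P$ is locally free of rank one, so that the defining Tor- and Ext-vanishing conditions transform cleanly. The equivalence (iii) $\Leftrightarrow$ (iv) is Foxby equivalence (Remark~\ref{disc111220a}). For (iii) $\Rightarrow$ (v), I note that any semidualizing module $X$ lies in $\cat{B}_X(R)$: the three defining conditions (the Ext- and Tor-vanishing together with the bidual isomorphism) follow immediately from $R \cong \Hom{X}{X}$ and $\Ext{i}{X}{X} = 0$ for $i \geq 1$. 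Hence $C \in \catbc(R) = \catbb(R)$ and $B \in \catbb(R) = \catbc(R)$.

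The real obstacle is (v) $\Rightarrow$ (ii). Localizing at each $\m \in \mspec(R)$ converts the hypothesis into $B_\m \in \catb_{C_\m}(R_\m)$ and $C_\m \in \catb_{B_\m}(R_\m)$ over a local ring, and the goal becomes $B_\m \cong C_\m$. This is a local rigidity theorem for semidualizing modules: mutual membership in each other's Bass classes forces isomorphism. The strategy would be to use the bidual-type identity $B_\m \cong \Hom[R_\m]{\Hom[R_\m]{C_\m}{B_\m}}{C_\m}$ coming from $B_\m \in \catb_{C_\m}(R_\m)$, combined with the symmetric input $C_\m \in \catb_{B_\m}(R_\m)$ via Foxby equivalence, to conclude that $\Hom[R_\m]{C_\m}{B_\m}$ is free of rank one, whence $B_\m \cong C_\m$. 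This local input is exactly what elevates the global homological equivalence (iii)--(v) to the arithmetic equivalence (i) encoded by the $\Pic(R)$-torsor structure on $\s_0(R)$.
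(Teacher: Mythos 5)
The paper does not prove this statement: it is recorded as a \emph{Fact} with a citation to Frankild--Sather-Wagstaff--Taylor, so your argument has to stand entirely on its own. Your treatment of (i)$\iff$(ii) is correct and complete ($P=\Hom BC$ is finitely generated over the noetherian ring $R$, Hom out of the finitely presented module $B$ commutes with localization, and a finitely generated module free of rank one at every maximal ideal is invertible, so checking the evaluation map locally suffices), and (iii)$\implies$(v) is fine since every semidualizing module lies in its own Bass class.

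There are, however, two genuine gaps. First, ``(iii)$\iff$(iv) is Foxby equivalence'' does not work: Foxby equivalence relates $M\in\catbc(R)$ to $\Hom CM\in\catac(R)$ for a single fixed $C$; it does not convert the equality $\catbb(R)=\catbc(R)$ into $\catab(R)=\catac(R)$. (From $M\in\catab(R)=\catac(R)$ you get $\Otimes CM\in\catbc(R)=\catbb(R)$, hence $\Hom B{\Otimes CM}\in\catab(R)$ --- which is not $M$.) As structured, your proof never establishes any implication of the form (iv)$\implies$(one of the others), so condition (iv) is not actually tied into the cycle. A related, smaller imprecision: knowing that $\Otimes P-$ carries $\catbb(R)$ \emph{bijectively onto} $\catbc(R)$ is not the same as the needed \emph{equality} of classes; what you want is that for a fixed $M$ the defining conditions for $\catbc$ transform into those for $\catbb$ via $\Hom CM\cong\Otimes{\Hom PR}{\Hom BM}$, and the analogous computation for Auslander classes then gives (i)$\implies$(iv), after which (iv) still needs to be fed back into the cycle (e.g.\ by localizing and using local rigidity, which requires knowing that membership in $\cat{A}_{B_{\m}}(R_{\m})$ localizes appropriately).

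Second, the crux (v)$\implies$(ii) is asserted rather than proved, and the identity you propose to launch it from, $B_{\m}\cong\Hom[R_\m]{\Hom[R_\m]{C_{\m}}{B_{\m}}}{C_{\m}}$, is not what Bass-class membership gives you: $B\in\catbc(R)$ yields the evaluation isomorphism $B\cong\Otimes C{\Hom CB}$ together with the Ext- and Tor-vanishing, not that double-dual formula, and the latter does not follow without further argument. The missing local argument can be done as follows: with $(R,\m,k)$ local, set $X=\Hom CB$ and $Y=\Hom BC$; the two memberships give $B\cong\Otimes CX$ and $C\cong\Otimes BY$, hence $B\cong\Otimes B{(\Otimes YX)}$, and Nakayama gives $\beta_0(B)=\beta_0(B)\,\beta_0(X)\,\beta_0(Y)$, forcing $X$ and $Y$ to be cyclic. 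Writing $X\cong R/I$, the isomorphism $B\cong C/IC$ gives $I\subseteq\ann(B)=0$ (the annihilator vanishes because $\Hom BB\cong R$), so $X\cong R$ and $B\cong C$. Without something of this sort, the deepest step of the theorem remains an appeal to the very result being proved.
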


\begin{lem}\label{lem111224c}
Let $B$ and $C$ be semidualizing $R$-modules such that $B\approx C$,
and let $[P]\in\Pic(R)$ such that $C\cong \Otimes PB$.
Then one has $\catpb(R)=\catpc(R)$ 
and
$\catfb(R)=\catfc(R)$
and
$\catib(R)=\catic(R)$. 
\end{lem}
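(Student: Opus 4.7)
The plan is to exploit the invertibility of $P$. Since $[P]\in\Pic(R)$, the dual $P^{-1}=\Hom PR$ satisfies $\Otimes P{P^{-1}}\cong R$, so the hypothesis $C\cong\Otimes PB$ also yields $B\cong\Otimes{P^{-1}}C$. Moreover $P$ is finitely generated projective of rank~$1$, hence a direct summand of some free module $R^n$, and likewise for $P^{-1}$.

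The first step is a small preservation observation. Tensoring with $P$ (or with $P^{-1}$) sends $\catp(R)$ into $\catp(R)$ and $\catf(R)$ into $\catf(R)$ by the usual closure of projectives and flats under tensor products with projective modules. For injectives, if $I\in\cati(R)$ then $\Otimes PI$ is a direct summand of $\Otimes{R^n}I\cong I^n$, so $\Otimes PI\in\cati(R)$; the same holds for $P^{-1}$.

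Given this, the equalities $\catpc(R)=\catpb(R)$ and $\catfc(R)=\catfb(R)$ follow readily from associativity of $\otimes$. For instance, if $M\cong\Otimes QC$ with $Q\in\catp(R)$, then $M\cong\Otimes{(\Otimes QP)}B$ and $\Otimes QP\in\catp(R)$, so $M\in\catpb(R)$; the reverse inclusion uses $B\cong\Otimes{P^{-1}}C$ in the same fashion, and the flat case is identical with $\catp(R)$ replaced by $\catf(R)$.

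The main obstacle is the injective equality $\catic(R)=\catib(R)$, since the expression $\Hom CI$ does not interact with a tensor factor in $C$ as transparently as $\Otimes QC$ does. Here I would combine Hom-tensor adjointness with the tensor-evaluation isomorphism $\Hom PM\cong\Otimes{P^{-1}}M$, which holds for any $R$-module $M$ because $P$ is finitely generated projective. Given $M\cong\Hom CI$ with $I\in\cati(R)$, applying $C\cong\Otimes PB$ and adjointness yields $M\cong\Hom B{\Hom PI}$, and $\Hom PI\cong\Otimes{P^{-1}}I$ is injective by the preservation step, so $M\in\catib(R)$. The reverse inclusion is symmetric via $B\cong\Otimes{P^{-1}}C$, and closure under isomorphism (built into the definition of subcategory) handles any bookkeeping.
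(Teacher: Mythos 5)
Your proof is correct and follows the same route as the paper: associativity of tensor product handles $\catpc$ and $\catfc$, and the reverse inclusions use $B\cong\Otimes{\Hom PR}{C}$. The paper dispatches the injective case with "verified similarly," and your explicit argument via Hom-tensor adjointness ($\Hom{\Otimes PB}{I}\cong\Hom B{\Hom PI}$) together with the observation that $\Hom PI$ is injective (a summand of $I^n$) is precisely the intended filling-in of that step.
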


\begin{proof}
Let $Q$ be a projective $R$-module. The assumption $C\cong \Otimes PB$
implies that 
$$\Otimes{C}{Q}
\cong\Otimes{(\Otimes PB)}{Q}
\cong\Otimes{B}{(\Otimes PQ)}.
$$
Since $\Otimes PQ$ is projective, this implies that $\catpc(R)\subseteq\catpb(R)$. 
The reverse containment is proved similarly, using the isomorphism
$B\cong\Otimes{\Hom PR}{B}$.
The equalities  $\catfb(R)=\catfc(R)$ and $\catib(R)=\catic(R)$ are verified similarly.
\end{proof}

\subsection*{Two Lemmas on Semidualizing Modules}

\

The next two results are for use in Section~\ref{sec111223a}.

\begin{lem}\label{lem120121a}
Assume that $(R,\m,k)$ is local, and let $C$ be a semidualizing $R$-module.
Consider the following conditions:
\begin{enumerate}[\rm(i)]
\item \label{lem120121a1} $C\cong R$.
\item \label{lem120121a2} $\Otimes CC$ is free.
\item \label{lem120121a3} $\pd_R(\Otimes CC)<\infty$.
\end{enumerate}
Then one has~\eqref{lem120121a1}$\iff$\eqref{lem120121a2}$\implies$\eqref{lem120121a3}.
If $R$ is artinian, then the conditions~\eqref{lem120121a1}--\eqref{lem120121a3} are equivalent.
\end{lem}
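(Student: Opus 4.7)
The plan is as follows. The implications (i)$\implies$(ii)$\implies$(iii) are immediate: the first from $\Otimes RR\cong R$, and the second because any free module has projective dimension at most zero. The substance of the lemma lies in (ii)$\implies$(i) and, in the artinian case, (iii)$\implies$(ii).

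For (ii)$\implies$(i), I would begin by writing $\Otimes CC\cong R^n$ for some $n\geq 1$. Since $R^n$ has finite projective dimension, Remark~\ref{disc111220a} puts $R^n$ into $\catbc(R)$, and Foxby equivalence then places $C$ in $\catac(R)$. The defining isomorphism of the Auslander class now supplies
\[
C\cong\Hom{C}{\Otimes CC}.
\]
The key step is to compute $\End_R(\Otimes CC)$ in two different ways. On the one hand, $\End_R(R^n)\cong R^{n^2}$ as an $R$-module. On the other, Hom-tensor adjointness combined with the Auslander-class identification above yields
\[
\End_R(\Otimes CC)\cong\Hom{C}{\Hom{C}{\Otimes CC}}\cong\Hom{C}{C}\cong R,
\]
where the last isomorphism is the semidualizing homothety. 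Comparing these two descriptions forces $R\cong R^{n^2}$ as $R$-modules, hence $n=1$ and $\Otimes CC\cong R$. Tensoring with $k=R/\m$ and using $(\Otimes CC)/\m(\Otimes CC)\cong (C/\m C)\otimes_k(C/\m C)$ gives $(\dim_k C/\m C)^2=1$, so $C$ is cyclic, and Remark~\ref{disc111219bx} concludes that $C\cong R$.

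For the artinian case, (iii)$\implies$(ii) follows from the Auslander--Buchsbaum formula: when $\depth(R)=0$, any module of finite projective dimension is projective, and over the local ring $R$ that means free. Applied to $\Otimes CC$ this produces (ii), and the chain (ii)$\implies$(i)$\implies$(iii) established above completes the equivalence.

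The only genuinely nontrivial step is the two-way computation of $\End_R(\Otimes CC)$ inside the proof of (ii)$\implies$(i); the conceptual point is that the Foxby detour through $\catac(R)$ is exactly what collapses the endomorphism ring from the naive $R^{n^2}$ down to $R$. Without it one only has the tautological map $R\to\End_R(\Otimes CC)$ and no reason for it to be an isomorphism.
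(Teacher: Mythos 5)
Your implications (i)$\implies$(ii)$\implies$(iii) and the artinian case of (iii)$\implies$(ii) via Auslander--Buchsbaum are correct and agree with the paper. The proof of (ii)$\implies$(i), however, has a fatal gap at its very first step. Remark~\ref{disc111220a} places modules of finite flat (hence finite projective) dimension in the \emph{Auslander} class $\catac(R)$; the \emph{Bass} class $\catbc(R)$ is only guaranteed to contain modules of finite \emph{injective} dimension and of finite $\catpc$-projective dimension. So ``$R^n$ has finite projective dimension'' does not put $R^n$ into $\catbc(R)$. Worse, the premise you need is circular: since the defining conditions of the Bass class are additive, $R^n\in\catbc(R)$ is equivalent to $R\in\catbc(R)$, and in the local setting this is equivalent to the conclusion $C\cong R$. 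Indeed, $R\in\catbc(R)$ requires the evaluation map $\Otimes{C}{\Hom CR}\to R$ to be surjective, hence requires some homomorphism $C\to R$ to be onto; such a surjection splits, and since $\End_R(C)\cong R$ is local, $C$ is indecomposable, forcing $C\cong R$. Everything downstream depends on this unjustified membership: the identification $\Hom{C}{\Otimes CC}\cong C$ is precisely the Auslander-class condition for $C$, and without it Hom-tensor adjointness only returns the tautology $\End_R(R^n)\cong R^{n^2}$ rather than collapsing it to $R$.

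The paper's argument avoids Foxby equivalence entirely. With $\beta=\beta_0(C)$, Nakayama gives $\Otimes CC\cong R^{\beta^2}$; the surjection $C^{\beta}\onto\Otimes CC\cong R^{\beta^2}$, obtained from $R^{\beta}\onto C$ by right exactness of tensor product, splits because its target is free, so $R^{\beta^2}$ is a direct summand of $C^{\beta}$. Applying $\End_R(-)$, the module $R^{\beta^4}\cong\End_R(R^{\beta^2})$ becomes a direct summand of $\End_R(C^{\beta})\cong R^{\beta^2}$, whence $\beta^4\leq\beta^2$ and $\beta=1$; then $C$ is cyclic and $C\cong R$ by Remark~\ref{disc111219bx}. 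Note that the homothety isomorphism $\End_R(C)\cong R$ is invoked only for $C$ itself, where it holds by the definition of a semidualizing module. If you want to keep the flavor of your endomorphism-ring computation, this is the correct way to arrange it.
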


\begin{proof}
The implications~\eqref{lem120121a1}$\implies$\eqref{lem120121a2}$\implies$\eqref{lem120121a3} are straightforward. 
When $R$ is artinian, the implication~\eqref{lem120121a3}$\implies$\eqref{lem120121a2}
follows from the Auslander-Buchsbaum formula.

\eqref{lem120121a2}$\implies$\eqref{lem120121a1}
Assume that $\Otimes CC$ is free, and let $\beta=\beta_0(C)$ denote the minimal number of generators of $C$.
By Nakayama's Lemma, the module $\Otimes CC$ is minimally generated by $\beta^2$ many elements, so
we have $\Otimes CC\cong R^{\beta^2}$.
On the other hand, the surjection $R^\beta\onto C$
gives a surjection 
$$C^\beta\onto\Otimes CC\cong R^{\beta^2}$$
by right exactness of tensor product.
This splits, so $R^{\beta^2}$ is a direct summand of $C^{\beta}$. Taking endomorphism rings,
we conclude that $\End(R^{\beta^2})\cong R^{\beta^4}$ is a direct summand of
$\End_R(C^{\beta})\cong R^{\beta^2}$. In particular, this implies that
$\beta^4\leq \beta^2$, which implies that $\beta=1$. It follows that $C$ is cyclic, so $C\cong R$ by
Remark~\ref{disc111219bx}.
\end{proof}

For perspective, the ring $R$ in the next result is isomorphic to the ``trivial extension'' or 
``idealization'' $k\ltimes k^2$.

\begin{lem}\label{lem120121b}
Let $k$ be a field, and set $R=k[X,Y]/(X,Y)^2$. If $C$ is a non-free semidualizing $R$-module, then
$C$ is dualizing for $R$ and $\Otimes CC\cong k^4$.
\end{lem}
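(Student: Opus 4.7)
The plan is to classify the semidualizing $R$-modules by a local-algebra analysis and then compute $\Otimes CC$ directly. Set $\beta=\dim_k(C/\m C)$ and $\nu=\dim_k(\m C)$; since $\m^2=0$ we have $\m C\subseteq\soc(C)$, so $\dim_k C=\beta+\nu$.

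First I would dispose of two trivial cases. If $\beta=1$ then $C\cong R/I$ is cyclic and $\End_R(C)\cong R/I$; the isomorphism $R\xrightarrow{\sim}\End_R(C)$ forces $I=0$, so $C\cong R$, contradicting non-freeness. If $\nu=0$ then $\m C=0$, so $C\cong k^\beta$ as an $R$-module and $\End_R(C)=\operatorname{M}_\beta(k)$ has $k$-dimension $\beta^2\neq 3$.

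Next I would bound $\dim_k\End_R(C)\geq\beta\nu+1$. Fix a $k$-splitting $C=V\oplus\m C$ with $\dim_k V=\beta$. For any $k$-linear map $\phi\colon V\to\m C$, extending $\phi$ by $0$ on $\m C$ yields an $R$-linear endomorphism $\widetilde\phi$ of $C$; $R$-linearity holds because $\m\cdot\m C=0$. These $\widetilde\phi$ span a $\beta\nu$-dimensional subspace of $\End_R(C)$ whose members all have image in $\m C$, and the identity map of $C$ is linearly independent from this subspace since $V\cap\m C=0$ and $V\neq 0$. Combined with $\dim_k\End_R(C)=3$, this gives $\beta\nu\leq 2$; together with $\beta\geq 2$ and $\nu\geq 1$, only $(\beta,\nu)=(2,1)$ survives, so $\dim_k C=3$.

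For uniqueness, choose generators $c_1,c_2$ of $C$ and a basis $e$ of $\m C$, and write $xc_i=a_i e$, $yc_i=b_i e$. A direct computation using $f(xc_i)=xf(c_i)$ and $f(yc_i)=yf(c_i)$ shows that $\dim_k\End_R(C)=3$ forces the $2\times 2$ matrix $A=\bigl(\begin{smallmatrix}a_1&a_2\\ b_1&b_2\end{smallmatrix}\bigr)$ to be invertible; then changes of basis of $V$ and of $\m C$ transform $A$ into the identity. This is exactly the $R$-module structure of $D=E_R(k)$ under the correspondence $c_1\leftrightarrow x^*$, $c_2\leftrightarrow y^*$, $e\leftrightarrow 1^*$, so $C\cong D$, which is dualizing since $R$ is artinian. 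Finally, for $\Otimes CC\cong k^4$ I would take the minimal presentation $R^3\to R^2\to D\to 0$ encoding the relations $yx^*=0$, $xy^*=0$, $xx^*-yy^*=0$, apply $-\otimes_R D$, and compute that the image of the induced map $D^3\to D^2$ equals $(k\cdot 1^*)^{\oplus 2}\subseteq D^2$ of $k$-dimension $2$; hence $\Otimes DD$ has $k$-dimension $6-2=4$ with trivial $\m$-action, yielding $\Otimes DD\cong k^4$. The main obstacle is the lower-bound and matrix-invertibility analysis of $\End_R(C)$; once in place, the remaining steps are routine.
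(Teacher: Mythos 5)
Your proof is correct, but it takes a genuinely different route from the paper's. The paper first shows $C$ is dualizing by a homological argument: from a presentation $0\to k^a\to R^{\beta}\to C\to 0$ (using $\m^2=0$) and the vanishing $\Ext{i}{R}{C}=0=\Ext{i}{C}{C}$ for $i\geq1$ it deduces $\Ext{i}{k}{C}=0$, hence $\id_R(C)<\infty$; it then quotes the known structure of $E_R(k)$ and pins down $\len_R(\Otimes CC)=4$ by squeezing it between bounds coming from the two exact sequences $0\to k\to C\to k^2\to0$ and $0\to R/XR\to C\to k\to0$, finishing with Nakayama. You instead classify the module directly from the single hypothesis that the homothety map $R\to\End_R(C)$ is bijective: the inequality $\dim_k\End_R(C)\geq\beta\nu+1$ forces $(\beta,\nu)=(2,1)$, and the matrix analysis (a rank-one $A$ yields a decomposable module $R/(\ell)\oplus k$ with $\dim_k\End_R(C)\geq4$, while an invertible $A$ is column-reduced to the identity) identifies $C$ with $E_R(k)$; the tensor square is then computed from an explicit presentation. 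Your argument never invokes $\Ext{i}{C}{C}=0$ for $i\geq1$, so it actually proves the slightly stronger statement that every non-free finitely generated $R$-module with bijective homothety map is $E_R(k)$; the price is the endomorphism-ring bookkeeping, which you only sketch (the deferred ``direct computation'' does check out: in the rank-one case one finds $\dim_k\End_R(C)\geq 4$). The one step you should make explicit is why $\Otimes CC$ has trivial $\m$-action; this is immediate from Nakayama, since $\beta_0(\Otimes CC)=\beta_0(C)^2=4=\dim_k(\Otimes CC)$, which is exactly how the paper concludes.
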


\begin{proof}
As $C$ is not free, it is non-cyclic by Remark~\ref{disc111219bx}, so we have $\beta:=\beta_0(C)\geq 2$.
The ring $R$ is local with maximal ideal $\m=(X,Y)R$ such that $\m^2=0$. 

We first show that $C$ is dualizing for $R$.
Since $C$ is non-free, and $\m^2=0$, it follows that there is an exact sequence
$$0\to k^a\to R^{\beta}\to C\to 0$$
with $a\neq 0$. The conditions $\Ext iRC=0=\Ext iCC$ for all $i\geq 1$ imply that
$\Ext i{k^a}C=0$ for all $i\geq 1$. We have $a\neq 0$, so $\Ext i{k}C=0$ for all $i\geq 1$.
Thus $C$ has finite injective dimension, and $C$ is dualizing by definition.

The structure of the dualizing module for this ring is pretty well understood.
For instance, we have $\beta=\mu^0_R(R)=2$. 
Moreover, we can describe $C$ in terms of generators and relations, as follows.
The multi-graded structure on $R$ is represented in the following diagram:
$$\xymatrix@=.5em{
&&&  &  &  &  &  \\
R&&& \bullet &  &  &  &  \\
&&& \bullet \ar[uu]\ar[rr] & \bullet &}$$
where each bullet represents the corresponding monomial in $R$.
It follows that 
$C\cong E_{R}(k)\cong k\cdot X^{-1}\oplus k\cdot Y^{-1}\oplus k\cdot 1$
with multi-graded module structure given by the formulas
\begin{align*}
X\cdot 1&=0
&X\cdot X^{-1}&=1
&X\cdot Y^{-1}&=0
\\
Y\cdot 1&=0
&Y\cdot Y^{-1}&=1
&Y\cdot X^{-1}&=0.
\end{align*}
In other words, the multi-graded structure is represented by the following diagram:
$$\xymatrix@=.5em{
&&& \bullet & \bullet \ar[ll] \ar[dd] &  \\
C&&& &  \bullet &  \\
&&&&&&&&}
$$
where each bullet represents the corresponding monomial in $C$.
Using this grading, one can show that
$C\supseteq RY^{-1}\cong R/XR$ and
$C/RY^{-1}\cong k$.
In particular, there is an exact sequence
\begin{equation}\label{eq120121a}
0\to R/XR\to C\to k\to 0.
\end{equation}
Also, we see that $XC=k\cdot 1$, so $C/XC\cong k^2$.

We claim that $4\leq\len_R(\Otimes CC)\leq 6$.
To check this, consider the exact sequence
\begin{equation*}
0\to k\to C\to k^2\to 0
\end{equation*}
coming from the equalities $\beta=2$, $\len_R(C)=3$, and $\m^2=0$.
The right exactness of $\Otimes C-$ implies that the next sequence is exact:
$$\Otimes Ck\to \Otimes CC\to \Otimes C{k^2}\to 0.$$
Since $\Otimes Ck\cong k^\beta=k^2$, it follows that
$$4\leq\len_R(\Otimes CC)\leq 4+2=6$$
as claimed.

Next, we show that  $\len_R(\Otimes CC)\leq 4$.
For this, we apply $\Otimes C-$ to the sequence~\eqref{eq120121a}
to obtain the next exact sequence
$$C/XC\to \Otimes CC\to \Otimes Ck\to 0.
$$
As we noted above, we have $C/XC\cong k^2\cong\Otimes Ck$, so additivity of length implies that $\len_R(\Otimes CC)\leq 4$.

It follows that $\len_R(\Otimes CC)= 4$.
Also, we have $\beta_0(\Otimes CC)=\beta_0(C)^2=\beta^2=4$, by Nakayama's Lemma.
That is, the modules $\Otimes CC$ and $(\Otimes CC)/\m(\Otimes CC)$ both have length 4.
Since $(\Otimes CC)/\m(\Otimes CC)$ is a homomorphic image of $\Otimes CC$, it follows that 
$\Otimes CC\cong(\Otimes CC)/\m(\Otimes CC)\cong k^4$
as desired.
\end{proof}

\section{Proper Resolutions}\label{sec111231a}

Throughout this section,  $C$ is a semidualizing $R$-module, and  $M$ is an $R$-module.

\

The results of this section document some properties of proper $\catfc$-resolutions and proper $\catpc$-resolutions.
 We begin with some notation.

\begin{construction}\label{constr111220b}
Let $F$ be a flat (e.g., projective) resolution of $\Hom CM$.
$$F^+=\cdots\xra{\partial^F_2}F_1\xra{\partial^F_1}F_0\xra{\tau}\Hom CM\to 0.$$
Let $\xi^C_M\colon\Otimes{C}{\Hom CM}\to M$ denote the natural evaluation map, and let
$(\Otimes{C}{F})^\pm$ denote the following complex
$$(\Otimes{C}{F})^\pm=\cdots\xra{\Otimes[]{C}{\partial^F_2}}\Otimes C{F_1}
\xra{\Otimes[]{C}{\partial^F_1}}\Otimes C{F_0}\xra{\xi^C_M\circ(\Otimes C\tau)}M\to 0
$$
where $\Otimes{C}{\Hom CM}\xra{\xi^C_M} M$ is the natural evaluation map.
In other words,  $(\Otimes{C}{F})^\pm$ is obtained by augmenting the complex
$\Otimes CF$ by the composition
$$\Otimes C{F_0}\xra{\Otimes C\tau}\Otimes C{\Hom CM}\xra{\xi^C_M}M.
$$
\end{construction}

The next lemma  is  implicit in~\cite{takahashi:hasm}.

\begin{lem}\label{lem111220a}
\begin{enumerate}[\rm(a)]
\item \label{lem111220a1}
If  $F$ is a proper flat resolution of $\Hom CM$, then
$\Otimes CF$ is a proper $\catfc$-resolution of $M$.
\item \label{lem111220a2}
If $G$ is a proper $\catfc$-resolution of $M$, then
$\Hom CG$ is a proper flat resolution of $\Hom CM$.
\item \label{lem111220a3}
If $P$ is a projective resolution of $\Hom CM$, then
$\Otimes CP$ is a proper $\catpc$-resolution of $M$.
\item \label{lem111220a4}
If $Q$ is a proper $\catpc$-resolution of $M$, then
$\Hom CQ$ is a projective resolution of $\Hom CM$.
\end{enumerate}
\end{lem}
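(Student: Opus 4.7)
The plan is to exploit Hom-tensor adjunction together with Foxby-type identifications for flat and projective modules. The essential input from Remark~\ref{disc111220a} is that every flat $R$-module $G$ lies in $\catac(R)$, so the natural unit map yields an isomorphism $\Hom{C}{\Otimes C{G}}\cong G$. Moreover, the triangle identity for the adjunction $\Otimes C{-}\dashv \Hom C{-}$ guarantees that the composition
\[\Hom CM\xra{\eta}\Hom C{\Otimes C\Hom CM}\xra{\Hom C{\xi^C_M}}\Hom CM\]
is the identity for any $R$-module $M$; this is what lets us handle augmentations even though $\Hom CM$ itself may fail to lie in $\catac(R)$.

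For parts (a) and (c), starting from a proper flat (respectively, projective) resolution $F$ of $\Hom CM$ with augmentation $\tau\colon F_0\to\Hom CM$, I would verify the two conditions defining a proper $\catfc$-resolution (respectively, $\catpc$-resolution) of $M$. Each $\Otimes CF_i$ lies in $\catfc(R)$ (respectively, $\catpc(R)$) by definition. To verify properness, let $H$ be flat (respectively, projective), so that $\Otimes CH\in\catfc(R)$ (respectively, $\catpc(R)$). Combining adjunction with the identifications $\Hom C{\Otimes CF_i}\cong F_i$ and the triangle identity above (to handle the composed augmentation $\xi^C_M\circ(\Otimes C\tau)$) produces a natural isomorphism of complexes
\[\Hom{\Otimes CH}{(\Otimes CF)^{\pm}}\;\cong\;\Hom{H}{F^+},\]
and the right-hand side is exact since $F^+$ is a proper flat resolution, or since every projective resolution is automatically proper by Remark~\ref{disc111219a}.

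For parts (b) and (d), starting from a proper $\catfc$-resolution (respectively, $\catpc$-resolution) $G$ of $M$, write $G_i\cong\Otimes C{H_i}$ with $H_i$ flat (respectively, projective); then $\Hom C{G_i}\cong H_i$ is flat (respectively, projective). Since $C\cong\Otimes RC$ itself lies in $\catpc(R)\subseteq\catfc(R)$ and $G$ is proper, applying $\Hom C{-}$ to the augmented complex $G^+$ yields an exact sequence $(\Hom CG)^+$, so $\Hom CG$ is a flat (respectively, projective) resolution of $\Hom CM$ in the usual sense. For (b), properness with respect to an arbitrary flat $H$ follows from the adjunction identification $\Hom{H}{(\Hom CG)^+}\cong\Hom{\Otimes CH}{G^+}$ together with the properness of $G$ (using $\Otimes CH\in\catfc(R)$); for (d), no further check is needed because projective resolutions are automatically proper.

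The main obstacle will be bookkeeping the augmentation maps through the chain of natural isomorphisms in parts (a) and (c), and in particular confirming that $\xi^C_M\circ(\Otimes C\tau)$ corresponds to $\tau$ itself under the adjunction. This is precisely where the triangle identity is indispensable; once it is in hand, the remaining verifications reduce to routine applications of adjunction and the Auslander-class identifications for flat modules.
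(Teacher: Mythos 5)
Your proposal is correct and follows essentially the same route as the paper: Hom-tensor adjointness combined with the Auslander-class identification $\Hom C{\Otimes CF_i}\cong F_i$ for flat (or projective) $F_i$, with the dual direction handled by applying $\Hom C-$ and using that $C\in\catfc(R)$. Your explicit appeal to the triangle identity is exactly the content of the step the paper dismisses as ``straightforward (but tedious),'' namely that the composed augmentation $\xi^C_M\circ(\Otimes C\tau)$ corresponds to $\tau$ under adjunction.
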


\begin{proof}
\eqref{lem111220a1}
To show that $\Otimes CF$ is a proper $\catfc$-resolution of $M$,
it suffices to show that the complex $(\Otimes{C}{F})^\pm$ 
from Construction~\ref{constr111220b} is $\Hom{\catfc}{-}$-exact.
Let $L$ be a flat $R$-module. We need to show that
the complex $\Hom{\Otimes CL}{(\Otimes{C}{F})^\pm}$
is exact. This complex has the following form.
$$
\cdots\!
\to\Hom{\Otimes CL}{\Otimes C{F_1}}
\to\Hom{\Otimes CL}{\Otimes C{F_0}}
\to\Hom{\Otimes CL}{M}\to 0
$$
By Hom-tensor adjointness, this is isomorphic to the next complex where $(-)'=\Hom C{\Otimes C-}$:
$$
\cdots\to
\Hom{L}{F_1'}\to
\Hom{L}{F_0'}\to
\Hom{L}{\Hom{C}{M}}\to
0.$$
Since each $F_i$ is in $\catac(R)$, this is isomorphic to a complex of the following form:
$$
\cdots\to
\Hom{L}{F_1}\to
\Hom{L}{F_0}\to
\Hom{L}{\Hom{C}{M}}\to
0.$$
It is straightforward (but tedious) to show that this complex is isomorphic to
$\Hom{L}{F^+}$ which is exact since $F$ is a proper flat resolution of $\Hom CM$.
Thus, $(\Otimes{C}{F})^\pm$ 
is $\Hom{\catfc}{-}$-exact, as desired.

\eqref{lem111220a2} For each $C$-flat module $Y$, the module
$\Hom{C}{Y}$ is flat. Thus, the fact that $\Hom CG$ is a proper flat resolution of $\Hom CM$
follows as in part~\eqref{lem111220a1}.

Parts~\eqref{lem111220a3} and~\eqref{lem111220a4} are proved similarly.
\end{proof}

\begin{lem}\label{lem111224a}
Let $X$ be an $R$-complex.
Then $X$ is $\Hom{\catpc}{-}$-exact if and only if $\Hom CX$ is exact.
\end{lem}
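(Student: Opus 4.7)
The plan is to use Hom-tensor adjointness to translate the definition of $\Hom{\catpc}{-}$-exactness into a condition about $\Hom CX$, and then exploit the fact that $R$ itself is projective.

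First I would observe that for any projective $R$-module $P$, there is a natural isomorphism of complexes
\begin{equation*}
\Hom{\Otimes PC}{X}\cong\Hom{P}{\Hom CX}
\end{equation*}
given by Hom-tensor adjointness. Since every object of $\catpc(R)$ is isomorphic to $\Otimes PC$ for some projective $P$, the complex $X$ is $\Hom{\catpc}{-}$-exact if and only if $\Hom{P}{\Hom CX}$ is exact for every projective $R$-module $P$.

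For the ``if'' direction, assume $\Hom CX$ is exact. Then for every projective $P$ the functor $\Hom P-$ is exact, so $\Hom P{\Hom CX}$ is exact, and by the adjointness isomorphism above this shows that $X$ is $\Hom{\catpc}{-}$-exact. For the ``only if'' direction, assume $X$ is $\Hom{\catpc}{-}$-exact. Specializing to $P=R$, which is projective, the adjointness isomorphism gives
\begin{equation*}
\Hom CX\cong\Hom R{\Hom CX}\cong \Hom{\Otimes RC}{X}\cong\Hom{C}{X},
\end{equation*}
and the right-hand side is exact by hypothesis (since $C\in\catpc(R)$).

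There is no real obstacle here; the argument is essentially a two-line application of adjointness together with the observation that the free module $R$ lies among the test objects, since $\Otimes RC\cong C$.
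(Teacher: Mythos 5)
Your proof is correct and follows essentially the same route as the paper's: the forward direction is immediate because $C\cong\Otimes RC$ lies in $\catpc(R)$, and the reverse direction is Hom-tensor adjointness $\Hom{\Otimes CP}{X}\cong\Hom P{\Hom CX}$ combined with the exactness of $\Hom P-$ for $P$ projective. Nothing to add.
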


\begin{proof}
The forward implication is from the condition $C\in\catpc(R)$.
For the reverse implication, let $P$ be a projective $R$-module.
Since $\Hom CX$ is exact,
the fact that $P$ is projective implies that the complex
$\Hom{\Otimes CP}{X}\cong\Hom{P}{\Hom CX}$
is exact, as desired.
\end{proof}

The next result is the first of several  applications of Lemma~\ref{lem111220a}.
We do not know whether the corresponding result for proper $\catfc$-resolutions holds.
See, however, Corollary~\ref{cor111231b}.

\begin{prop}\label{lem111221a}
Let $R\to S$ be a flat ring homomorphism.
If $L$ is a proper $\catpc$-resolution of $M$ over $R$, then
$\Otimes S{L}$ is a proper $\catp_{\Otimes{S}{C}}$-resolution of $\Otimes{S}M$ over $S$.
\end{prop}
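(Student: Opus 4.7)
The plan is to reduce the claim to a base-change property of ordinary projective resolutions, using Lemma~\ref{lem111220a} and Lemma~\ref{lem111224a} to translate between proper $\catpc$-resolutions and projective resolutions of $\Hom CM$.

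First, I would check that each term $\Otimes S{L_i}$ lies in $\catp_{\Otimes SC}(S)$. Writing $L_i\cong\Otimes PC$ for some projective $R$-module $P$, associativity of tensor product gives $\Otimes S{L_i}\cong\Otimes[S]{(\Otimes SP)}{(\Otimes SC)}$; as $\Otimes SP$ is projective over $S$, this shows $\Otimes S{L_i}\in\catp_{\Otimes SC}(S)$.

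Next, I would verify that the augmented complex $(\Otimes SL)^+=\Otimes S{L^+}$ is $\Hom[S]{\catp_{\Otimes SC}(S)}{-}$-exact. By Lemma~\ref{lem111224a}, applied over $S$ with semidualizing module $\Otimes SC$, it is enough to see that $\Hom[S]{\Otimes SC}{(\Otimes SL)^+}$ is exact as an $S$-complex. By Lemma~\ref{lem111220a}\eqref{lem111220a4}, $\Hom CL$ is a projective resolution of $\Hom CM$, so the augmented complex $\Hom C{L^+}$ is exact over $R$; flatness of $S$ preserves this after tensoring. Finally, since $C$ is finitely presented over the noetherian ring $R$ and $S$ is flat over $R$, the natural tensor-evaluation map
$$\Otimes S{\Hom CX}\xra{\cong}\Hom[S]{\Otimes SC}{\Otimes SX}$$
is an isomorphism for every $R$-module $X$, and naturality in $X$ promotes it to an isomorphism of $S$-complexes $\Otimes S{\Hom C{L^+}}\cong\Hom[S]{\Otimes SC}{(\Otimes SL)^+}$, giving the desired exactness.

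The only point requiring care is this last chain-level tensor-evaluation isomorphism, but it follows automatically from the naturality of the standard map in its second argument. As an alternative, one may apply Lemma~\ref{lem111220a}\eqref{lem111220a3} over $S$ to the projective resolution $\Otimes S{\Hom CL}$ of $\Otimes S{\Hom CM}\cong\Hom[S]{\Otimes SC}{\Otimes SM}$ to produce a proper $\catp_{\Otimes SC}$-resolution of $\Otimes SM$, and then identify it with $\Otimes SL$ via the Bass-class unit isomorphism ($L_i\in\catpc(R)\subseteq\catbc(R)$ ensures $\Otimes C{\Hom C{L_i}}\cong L_i$).
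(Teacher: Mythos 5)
Your proposal is correct and follows essentially the same route as the paper's proof: check termwise membership in $\catp_{\Otimes SC}(S)$ via associativity of tensor products, then reduce properness to the exactness of $\Hom[S]{\Otimes SC}{\Otimes S{L^+}}\cong\Otimes S{\Hom C{L^+}}$ using Lemma~\ref{lem111224a} over $S$ together with flatness of $S$ and the finiteness of $C$. The alternative argument you sketch at the end (pushing a projective resolution of $\Hom CM$ through Lemma~\ref{lem111220a} over $S$) is a valid variant but is not needed.
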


\begin{proof}
We augment $\Otimes SL$ in the natural way, via the given augmentation for $L$, so that we have
$(\Otimes SL)^+\cong \Otimes S{(L^+)}$. Thus, the notation $\Otimes SL^+$ is unambiguous.

To show  that $\Otimes S{L}$ is a proper $\catp_{\Otimes{S}{C}}$-resolution of $\Otimes{S}M$,
first note that each module in $L$  is of the form
$L_i\cong\Otimes{C}{P_i}$ for some projective $R$-module $P_i$.
Hence, the module $\Otimes{S}{P_i}$ is projective over $S$.
The isomorphisms
$$\Otimes S{L_i}\cong\Otimes{S}{(\Otimes{C}{P_i})}\cong\Otimes[S]{(\Otimes{S}C)}{(\Otimes{S}{P_i})}$$
imply that $\Otimes{S}{L_i}\in \catp_{\Otimes{S}{C}}(S)$.

Next, since $L$ is a proper $\catpc$-resolution of $M$ over $R$,
the complex $\Hom C{L^+}$ is exact. The fact that $S$ is flat over $R$ implies 
that the next complex
$$\Hom[S]{\Otimes SC}{\Otimes S{L^+}}\cong \Otimes S{\Hom C{L^+}}$$
is also exact. Hence, 
Lemma~\ref{lem111224a} implies that
$\Otimes S{L^+}$ is $\Hom[S]{\catp_{\Otimes SC}}{-}$-exact, 
so $\Otimes SL$ is proper, as desired.
\end{proof}

The previous result works for projectives, but not necessarily for flats.
On the other hand, the next result works for flats, but not for projectives.

\begin{prop}\label{lem111224b}
Let $R\to S$ be a flat ring homomorphism.
Assume that $M$ is an $S$-module, and
let $L$ be a proper $\catf_{\Otimes{S}{C}}$-resolution of $M$ over $S$.
Then $L$ is a proper $\catfc$-resolution of $M$ over $R$.
\end{prop}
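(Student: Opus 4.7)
The plan is to verify directly the two defining conditions of a proper $\catfc$-resolution of $M$ over $R$, transferring them from the analogous conditions over $S$ by means of the flatness of $R\to S$ together with Hom-tensor adjointness. First, I would check that each term $L_i$ lies in $\catfc(R)$. By hypothesis, $L_i\cong\Otimes[S]{G_i}{(\Otimes{S}{C})}$ for some flat $S$-module $G_i$. The standard identification $\Otimes[S]{G_i}{(\Otimes{S}{C})}\cong\Otimes{G_i}{C}$, combined with transitivity of flatness (so $G_i$ is $R$-flat since $S$ is), yields $L_i\in\catfc(R)$.

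Next, I would verify properness over $R$. Writing $L^+$ for the augmentation of $L$ by the surjection $L_0\to M$ (which is automatically $R$-linear since $M$ is an $R$-module via restriction of scalars), I must show that $\Hom{\Otimes{F}{C}}{L^+}$ is exact for every flat $R$-module $F$. Using the adjunction $\Hom{A}{B}\cong\Hom[S]{\Otimes{S}{A}}{B}$ valid for any $R$-module $A$ and any $S$-module $B$, together with the base-change identification $\Otimes{S}{(\Otimes{F}{C})}\cong\Otimes[S]{(\Otimes{S}{F})}{(\Otimes{S}{C})}$, one obtains
$$\Hom{\Otimes{F}{C}}{L^+}\cong\Hom[S]{\Otimes[S]{(\Otimes{S}{F})}{(\Otimes{S}{C})}}{L^+}.$$
Since $F$ is $R$-flat, $\Otimes{S}{F}$ is $S$-flat, so $\Otimes[S]{(\Otimes{S}{F})}{(\Otimes{S}{C})}\in\catf_{\Otimes{S}{C}}(S)$. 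The properness hypothesis on $L$ over $S$ then guarantees that the right-hand complex is exact, as required.

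I expect no substantive obstacle beyond bookkeeping with the adjunction and the base-change identification. The one point to handle carefully is that $L^+$ is the same complex whether one views $L_0\to M$ as $S$-linear or as $R$-linear, so the condition to be verified over $R$ matches, up to adjunction, the one assumed over $S$. This also explains the asymmetry with Proposition~\ref{lem111221a}: the adjunction flows in the correct direction to cover flat test-modules after restriction of scalars, whereas projective $R$-modules need not produce projective $S$-modules under extension, which is why the analogous restriction-of-scalars statement for $\catpc$-resolutions is not claimed.
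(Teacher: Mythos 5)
Your proposal is correct and follows essentially the same route as the paper: identify each $L_i$ as $\Otimes C{F_i}$ with $F_i$ flat over $R$ by transitivity of flatness, then use the adjunction $\Hom{A}{L^+}\cong\Hom[S]{\Otimes SA}{L^+}$ together with the base-change isomorphism $\Otimes S{(\Otimes CG)}\cong\Otimes[S]{(\Otimes SC)}{(\Otimes SG)}$ to reduce properness over $R$ to properness over $S$. (Only a cosmetic quibble: the augmentation $L_0\to M$ of a proper resolution need not be surjective, but nothing in your argument uses surjectivity.)
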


\begin{proof}
Each module $L_i$ is of the form 
$L_i\cong\Otimes[S]{(\Otimes{S}{C})}{F_i}\cong\Otimes{C}{F_i}$
for some flat $S$-module $F_i$. Since $S$ is flat over $R$, it follows that
each $F_i$ is flat over $R$, so each $L_i$ is in $\catfc(R)$.
To show that $L$ is proper over $R$, let $G$ be a flat $R$-module:
\begin{align*}
\Hom{\Otimes CG}{L^+}
&\cong\Hom{\Otimes CG}{\Hom[S]{S}{L^+}}\\
&\cong\Hom[S]{\Otimes {S}{(\Otimes CG)}}{L^+} \\
&\cong\Hom[S]{\Otimes[S]{(\Otimes SC)}{(\Otimes SG)}}{L^+}.
\end{align*}
Since  $G$ is flat over $R$, we know that $\Otimes SG$ is flat over $S$, and
it follows that $\Otimes[S]{(\Otimes SC)}{(\Otimes SG)}$ is $\Otimes SC$-flat over $S$.
Thus, the fact that $L$ is proper over $S$ implies that the displayed complexes are exact,
so $L$ is proper over $R$.
\end{proof}

Of course, localization gives useful examples of  flat ring homomorphisms.

\begin{cor}\label{cor111221a}
Let $U$ be a multiplicatively closed subset of $R$.
If $L$ is a proper $\catpc$-resolution of $M$ over $R$, then
$U^{-1}L$ is a proper $\catp_{U^{-1}C}$-resolution of $U^{-1}M$ over $U^{-1}R$.
\end{cor}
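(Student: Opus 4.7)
The plan is to observe that this corollary is an immediate consequence of Proposition~\ref{lem111221a}, since the localization map $R \to U^{-1}R$ is a flat ring homomorphism. First I would set $S = U^{-1}R$, which is flat over $R$ by standard commutative algebra. Then I would apply Proposition~\ref{lem111221a} directly with this $S$.

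The key identifications to make are the canonical isomorphisms $\Otimes{S}{L} \cong U^{-1}L$, $\Otimes{S}{M} \cong U^{-1}M$, and $\Otimes{S}{C} \cong U^{-1}C$. Under these identifications, Proposition~\ref{lem111221a} says exactly that $U^{-1}L$ is a proper $\catp_{U^{-1}C}$-resolution of $U^{-1}M$ over $U^{-1}R$, which is the desired conclusion.

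There is essentially no obstacle here; the statement is really a verbatim specialization of the previous proposition to the special case of a localization homomorphism. No additional work beyond invoking the proposition and recognizing the standard identifications of tensor products with localizations is required.
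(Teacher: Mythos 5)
Your proposal is correct and matches the paper's intent exactly: the paper states this corollary without proof, immediately after remarking that localization gives examples of flat ring homomorphisms, so it is indeed just Proposition~\ref{lem111221a} applied to $S=U^{-1}R$ together with the standard identifications $\Otimes{U^{-1}R}{(-)}\cong U^{-1}(-)$.
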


\begin{cor}\label{cor111221c}
Let $U$ be a multiplicatively closed subset of $R$, and assume that $M$ is a $U^{-1}R$-module.
If $L$ is a proper $\catf_{U^{-1}C}$-resolution of $M$ over $U^{-1}R$, then
$L$ is a proper $\catfc$-resolution of $M$ over $R$.
\end{cor}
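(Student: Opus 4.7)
The plan is to recognize this corollary as a direct specialization of Proposition~\ref{lem111224b} to the flat ring homomorphism given by localization. Specifically, I would take $S = U^{-1}R$ and invoke the fact (standard for noetherian rings, but true in general) that the localization map $R \to U^{-1}R$ is flat.

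First I would note the canonical identification $\Otimes{U^{-1}R}{C} \cong U^{-1}C$ as $U^{-1}R$-modules. With this in hand, the hypothesis that $L$ is a proper $\catf_{U^{-1}C}$-resolution of $M$ over $U^{-1}R$ becomes exactly the hypothesis of Proposition~\ref{lem111224b} applied with $S = U^{-1}R$. The conclusion of that proposition, namely that $L$ is a proper $\catfc$-resolution of $M$ over $R$, is precisely what we need.

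There is no real obstacle here, since all the work has already been done in Proposition~\ref{lem111224b}. The only point worth verifying carefully is that the $R$-module structure on $M$ and on each $L_i$ obtained by restriction of scalars along $R \to U^{-1}R$ matches the structure used when interpreting $L$ as a complex of $C$-flat $R$-modules; this is automatic from the identification $\Otimes{U^{-1}R}{C} \cong U^{-1}C$ and the fact that a flat $U^{-1}R$-module is flat over $R$. Thus the proof reduces to a single sentence citing Proposition~\ref{lem111224b}.
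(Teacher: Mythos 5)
Your proof is correct and is exactly the paper's intended argument: the corollary is stated immediately after the remark that localization is a special case of flat base change, and the paper derives it from Proposition~\ref{lem111224b} with $S=U^{-1}R$ via the identification $\Otimes{U^{-1}R}{C}\cong U^{-1}C$, just as you do. Nothing is missing.
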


The proofs of parts~\eqref{lem111228a3} and~\eqref{lem111228a2} 
of the next result are necessarily different because $\Hom L-$ does not commute with coproducts in general.

\begin{lem}\label{lem111228a}
Let $\{M_{j}\}_{j\in J}$ be a set of $R$-modules.
For each $j\in J$, let $X_j$ be a proper $\catfc$-resolution of $M_j$, and
let $Y_j$ be a proper $\catpc$-resolution of $M_j$.
\begin{enumerate}[\rm(a)]
\item \label{lem111228a3}
The product $\prod_jX_j$ is a proper $\catfc$-resolution of $\prod_jM_j$.
\item \label{lem111228a2}
The coproduct $\coprod_jY_j$ is a proper $\catpc$-resolution of $\coprod_jM_j$.
\end{enumerate}
\end{lem}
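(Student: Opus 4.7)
The plan is to handle the two parts separately, since the hint in the paper already signals that the proof for products uses direct commutativity of $\Hom$ with products, while the coproduct case requires a detour through Lemma~\ref{lem111224a}.

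For part~\eqref{lem111228a3}, I would first observe that each module $\prod_j (X_j)_i$ lies in $\catfc(R)$ because Remark~\ref{disc111219b} guarantees that $\catfc(R)$ is closed under products. Next, I would form the candidate augmentation $\bigl(\prod_j X_j\bigr)^+ = \prod_j X_j^+$, which makes sense termwise. To verify the properness condition, I would fix an arbitrary $Y \in \catfc(R)$ and exploit the fact that $\Hom{Y}{-}$ commutes with products in its second argument, yielding
\[
\Hom{Y}{\bigl(\textstyle\prod_j X_j\bigr)^+} \;\cong\; \prod_j \Hom{Y}{X_j^+}.
\]
Each factor is exact by properness of $X_j$, and a product of exact sequences of abelian groups is exact, which finishes the argument.

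For part~\eqref{lem111228a2}, the strategy is first to note that each term $\coprod_j (Y_j)_i$ is in $\catpc(R)$, using the closure of $\catpc(R)$ under coproducts from Remark~\ref{disc111219b}. For properness I would \emph{not} try to commute $\Hom{Y}{-}$ past the coproduct directly; instead I would invoke Lemma~\ref{lem111224a}, which reduces the problem to showing that $\Hom{C}{\bigl(\coprod_j Y_j\bigr)^+}$ is exact. Here the key input is that $C$ is finitely generated over the noetherian ring $R$, so $\Hom{C}{-}$ commutes with arbitrary coproducts (a standard check: any homomorphism from a finitely generated module into a coproduct lands in a finite subsum). Thus
\[
\Hom{C}{\bigl(\textstyle\coprod_j Y_j\bigr)^+} \;\cong\; \coprod_j \Hom{C}{Y_j^+},
\]
and each $\Hom{C}{Y_j^+}$ is exact by another application of Lemma~\ref{lem111224a} (this time to $Y_j$). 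A coproduct of exact sequences of abelian groups is exact, completing the proof.

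The main obstacle is essentially the one flagged by the remark preceding the lemma: $\Hom{L}{-}$ with $L\in\catpc(R)$ does not in general commute with coproducts, so the symmetric argument used in part~\eqref{lem111228a3} breaks down in part~\eqref{lem111228a2}. The workaround is to test properness on $C$ alone via Lemma~\ref{lem111224a}, where finite generation of $C$ restores the needed commutativity.
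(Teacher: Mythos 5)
Your proof is correct and follows essentially the same route as the paper's: for products you commute $\Hom{L}{-}$ past the product directly, and for coproducts you reduce via Lemma~\ref{lem111224a} to testing exactness of $\Hom{C}{-}$, where finite generation of $C$ lets $\Hom{C}{-}$ commute with coproducts. No gaps.
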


\begin{proof}
\eqref{lem111228a3}
Since $\catfc(R)$ is closed under products by Remark~\ref{disc111219b},
the complex $\prod_jX_j$ consists of modules in $\catfc(R)$.
The augmentation map for $(\prod_jX_j)^+$ is the natural one
induced on products, so we have
$(\prod_jX_j)^+=\prod_jX_j^+$. To show that this complex is $\Hom{\catfc}{-}$-exact,
let $L\in \catfc(R)$ and compute:
$$\textstyle\Hom{L}{\prod_jX_j^+}
\cong\prod_i\Hom{L}{X_j^+}.
$$
Since each complex $\Hom{L}{X_j^+}$ is exact by assumption,
the same is true of the displayed complex, as desired.

\eqref{lem111228a2}
As in part~\eqref{lem111228a3}, the modules in $\coprod_jY_j$ are $C$-projective.
To show that $\coprod_jY_j$ is proper,
Lemma~\ref{lem111224a} shows that we need only check that
$\Hom{C}{\coprod_jY_j^+}$ is exact.
Since $C$ is finitely generated, we know that $\Hom C-$  commutes with coproducts,
so the desired result follows as in the proof of part~\eqref{lem111228a3}.
\end{proof}

\section{Relative Homology} \label{sec0}

In this section,  $C$ is a semidualizing $R$-module, and  $M$ and $N$ are $R$-modules.

\

In our setting, there are four different relative Tor-modules to consider.
They are gotten by resolving in the first slot by modules in $\catpc(R)$ or $\catfc(R)$, and similarly for the second slot.

\begin{defn}\label{defn110730a}
Let $Q$ be a proper $\catpc$-resolution of $M$, and let $G$ be a proper $\catfc$-resolution of $M$.
For each $i\geq 0$, set
\begin{align*}
\torpcm iMN
&:=\HH_i(\Otimes QN)
&
\tormpc iNM
&:=\HH_i(\Otimes NQ)
\\
\torfcm iMN
&:=\HH_i(\Otimes GN)
&
\tormfc iNM
&:=\HH_i(\Otimes NG).
\end{align*}
\end{defn}

\begin{disc}
\label{disc120121a}
The properness assumption on the resolutions in Definition~\ref{defn110730a} guarantee that these
relative Tor constructions  are independent of the choice of resolutions and functorial in both arguments. 
See~\cite[Section 8.2]{enochs:rha}.
Also, there are natural transformations of bifunctors
\begin{align*}
\torpcm 0--
&\to\Otimes --
&
\tormpc 0--
&\to\Otimes --
\\
\torfcm 0--
&\to\Otimes --
&
\tormfc 0--
&\to\Otimes --.
\end{align*}
In general, these are not isomorphisms, as we see in Example~\ref{ex111220a} below. 

Given the symmetric nature of the definitions, one 
has 
\begin{align*}
\torpcm iMN
&\cong
\tormpc iNM
&
\torfcm iMN
&\cong
\tormfc iNM.
\end{align*}
Thus, every result for $\torpcm i--$ has a companion result for $\tormpc i--$,
and similarly for $\torfcm i--$ and $\tormfc i--$.
For the sake of brevity, we do not state both versions explicitly in most cases.
\end{disc}

\newcommand{\catfr}{\cat{F}_R}
\newcommand{\catpr}{\cat{P}_R}
\newcommand{\tormpr}[3]{\Tor[\catm\catpr]{#1}{#2}{#3}}
\newcommand{\torprm}[3]{\Tor[\catpr\catm]{#1}{#2}{#3}}
\newcommand{\tormfr}[3]{\Tor[\catm\catfr]{#1}{#2}{#3}}
\newcommand{\torfrm}[3]{\Tor[\catfr\catm]{#1}{#2}{#3}}

\begin{ex}\label{ex111223a}
In the trivial case $C=R$, 
we have $\catfr(R)=\catf(R)$ and $\catpr(R)=\catp(R)$, and
the relative Tors are the same as the absolute Tors.
$$\torprm i--
\cong\tormpr i--
\cong\torfrm i--
\cong\tormfr i--
\cong\Tor i--$$
\end{ex}

The following long exact sequences come from~\cite[Theorem 8.2.3]{enochs:rha}.

\begin{prop}
\label{prop111220b}
Let $\mathbb{L}=(0\to L'\to L\to L''\to 0)$ be a complex of $R$-modules.
\begin{enumerate}[\rm(a)]
\item \label{prop111220b1}
If $\mathbb{L}$ is $\Hom{\catpc}{-}$-exact (i.e., if $\Hom C{\mathbb L}$ is exact,
e.g., if $L'\in\catbc(R)$),
then there is a long exact sequence
$$\cdots\torpcm 1{L''}{N}\to\torpcm 0{L'}{N}\to\torpcm 0{L}{N}\to\torpcm 0{L''}{N}\to 0$$
that is natural in $\mathbb{L}$ and $N$.
\item \label{prop111220b2}
If $\mathbb{L}$ is $\Hom{\catfc}{-}$-exact, then there is a long exact sequence
$$\cdots\torfcm 1{L''}{N}\to\torfcm 0{L'}{N}\to\torfcm 0{L}{N}\to\torfcm 0{L''}{N}\to 0$$
that is natural in $\mathbb{L}$ and $N$.
\end{enumerate}
\end{prop}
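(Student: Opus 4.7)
The plan is to prove part~(a) via a relative Horseshoe Lemma for proper $\catpc$-resolutions; part~(b) will follow by the same template with $\catfc$ in place of $\catpc$. First I would dispatch the parenthetical case: if $L'\in\catbc(R)$, then $\Ext{1}{C}{L'}=0$, so applying the left-exact functor $\Hom{C}{-}$ to $\mathbb{L}$ yields an exact sequence, and Lemma~\ref{lem111224a} then guarantees $\mathbb{L}$ is $\Hom{\catpc}{-}$-exact. So I may assume that hypothesis throughout.

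Next I would fix proper $\catpc$-resolutions $Q'\to L'$ and $Q''\to L''$ and build a proper $\catpc$-resolution $Q\to L$ together with a degreewise split short exact sequence of complexes $0\to Q'\to Q\to Q''\to 0$ augmenting $\mathbb{L}$. In degree zero, set $Q_{0}=Q'_{0}\oplus Q''_{0}$; the map $Q_{0}\to L$ uses the composition $Q'_{0}\to L'\hookrightarrow L$ on the first summand and a lift $Q''_{0}\to L$ of the given precover $Q''_{0}\to L''$ on the second. The lift exists because the $\Hom{\catpc}{-}$-exactness of $\mathbb{L}$, applied to $X=Q''_{0}\in\catpc(R)$, makes $\Hom{Q''_{0}}{L}\to\Hom{Q''_{0}}{L''}$ surjective. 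A standard snake-lemma argument shows that $Q_{0}\to L$ is again a $\catpc$-precover and that the three kernels fit in a short exact sequence $0\to K'\to K\to K''\to 0$ which, using the split-exactness of $0\to Q'_{0}\to Q_{0}\to Q''_{0}\to 0$, is itself $\Hom{\catpc}{-}$-exact. Iterating produces the promised degreewise split short exact sequence of proper $\catpc$-resolutions.

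Applying $\Otimes{-}{N}$ preserves degreewise split exactness, so I obtain a short exact sequence of complexes $0\to\Otimes{Q'}{N}\to\Otimes{Q}{N}\to\Otimes{Q''}{N}\to 0$; its long exact sequence in homology is exactly the desired sequence. Naturality in $\mathbb{L}$ and $N$ is inherited from the functoriality (up to chain homotopy) of both the Horseshoe construction and the tensor product, which suffices on homology. Part~(b) follows by rerunning the same argument with $\catfc$ throughout, using $\catfc$-precovers of $L'$, $L''$ and the hypothesis that $\mathbb{L}$ is $\Hom{\catfc}{-}$-exact to produce the requisite lift.

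The main obstacle I anticipate is the inductive bookkeeping in the Horseshoe step, specifically confirming that the kernel sequence $0\to K'\to K\to K''\to 0$ inherits $\Hom{\catpc}{-}$-exactness (respectively $\Hom{\catfc}{-}$-exactness) so that the induction can proceed; once this is in hand, the rest is routine diagram chasing together with the standard long exact sequence in homology.
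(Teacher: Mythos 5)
Your overall strategy---a relative Horseshoe Lemma producing a degreewise split short exact sequence of proper resolutions, followed by tensoring with $N$ and taking the long exact sequence in homology---is exactly the content of the result the paper invokes here, namely \cite[Theorem 8.2.3]{enochs:rha}; the paper offers no proof beyond that citation. However, your implementation of the inductive step has a genuine gap. First, $\mathbb{L}$ is only assumed to be a \emph{complex} that is $\Hom{\catpc}{-}$-exact, not an exact sequence of modules, so the snake lemma does not apply to the diagram comparing $0\to Q'_0\to Q_0\to Q''_0\to 0$ with $\mathbb{L}$. Second, even when $\mathbb{L}$ happens to be exact, the augmentations $\tau'\colon Q'_0\to L'$, etc., are $\catpc$-precovers and need not be surjective (the paper stresses in Remark~\ref{disc111219a} that augmented proper resolutions need not be exact). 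The snake lemma then yields an exact sequence $0\to K'\to K\to K''\to\coker(\tau')\to\cdots$ whose connecting map $K''\to\coker(\tau')$, $x''\mapsto[\beta_0(x'')]$, has no reason to vanish: $K''$ need not lie in $\catpc(R)$, so the precover property of $\tau'$ cannot be used to push $\beta_0(K'')$ into $\im(\tau')$. Thus the sequence of kernels on which your recursion relies need not be short exact, and the induction breaks at the first step; the difficulty is more serious than the ``inheritance of $\Hom{\catpc}{-}$-exactness'' you flag.

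The standard repair avoids kernels entirely. Set $Q_n=Q'_n\oplus Q''_n$ and build the differentials in the form $\partial^Q_n=\left(\begin{smallmatrix}\partial'_n&\alpha_n\\0&\partial''_n\end{smallmatrix}\right)$, constructing $\alpha_n\colon Q''_n\to Q'_{n-1}$ inductively by lifting: the composite of $\beta_0\partial''_1$ with $L\to L''$ is zero, so by exactness of $\Hom{Q''_1}{\mathbb{L}}$ it factors through a map $Q''_1\to L'$, which lifts through $\tau'$ because $\Hom{Q''_1}{(Q')^{+}}$ is exact; for $n\geq 2$ the map $-\alpha_{n-1}\partial''_n$ is a cycle in $\Hom{Q''_n}{(Q')^{+}}$, hence a boundary, yielding $\alpha_n$ with $\partial'_{n-1}\alpha_n=-\alpha_{n-1}\partial''_n$. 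Properness of $Q^{+}$ then follows by applying $\Hom{X}{-}$ to the degreewise split exact sequence $0\to (Q')^{+}\to Q^{+}\to (Q'')^{+}\to 0$ (whose degree $-1$ part is exact by the hypothesis on $\mathbb{L}$) and using the long exact sequence in homology, the two outer complexes being exact. From there your argument---tensoring the degreewise split sequence of resolutions with $N$ and passing to homology---goes through verbatim, as does the analogous argument for $\catfc$.
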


\begin{construction}\label{constr111220a}
For each $i$, there is a natural transformation of bifunctors
$$\varrho_i\colon\torpcm i--\to \torfcm i--.$$
To construct $\varrho_i$,
let  $Q$ be a proper $\catpc$-resolution of $M$,
and let $G$ be a proper $\catfc$-resolution of $M$.
The containment $\catpc(R)\subseteq\catfc(R)$ implies that the augmented resolution
${G}^+$ is $\Hom{\catpc}{-}$-exact.
As in the proof of the functoriality of the relative Tors,
it follows that there is a morphism of complexes
$Q^+\to  G^+$ that is an isomorphism in degree $-1$. 
Furthermore, this morphism is unique up to homotopy.
Thus,  the induced morphism
$\Otimes {Q}N\to\Otimes {G}N$ gives rise to the desired map by taking homology.
\end{construction}

The next result compares to~\cite[Theorem 4.1]{takahashi:hasm} which has similar formulas for relative Ext.
This contains Theorem~\ref{intthm120115a} from the introduction.

\begin{thm}\label{prop110730a'}
For each $i$, there  are natural isomorphisms
$$\torpcm iMN\xra\cong\Tor i{\Hom CM}{\Otimes CN}\xra\cong\torfcm iMN$$
and the morphism $\torpcm i--\xra{\varrho_i}\torfcm i--$ is an isomorphism.
\end{thm}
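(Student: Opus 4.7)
The core idea is to exhibit both $\torpcm iMN$ and $\torfcm iMN$ as the classical $\Tor i{\Hom CM}{\Otimes CN}$, using Lemma~\ref{lem111220a} to transport (proper) projective/flat resolutions of $\Hom CM$ to proper $\catpc$/$\catfc$-resolutions of $M$.

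First I would fix a projective resolution $P$ of $\Hom CM$. By Lemma~\ref{lem111220a}\eqref{lem111220a3}, the complex $Q:=\Otimes CP$ is a proper $\catpc$-resolution of $M$. Since properness guarantees the independence of $\torpcm iMN$ from the chosen resolution (Remark~\ref{disc120121a}), I may compute
\[
\torpcm iMN=\HH_i(\Otimes QN)=\HH_i(\Otimes{(\Otimes CP)}{N})\cong\HH_i(\Otimes P{(\Otimes CN)})=\Tor i{\Hom CM}{\Otimes CN},
\]
where the last equality uses that $P$ is a projective (hence flat) resolution of $\Hom CM$. This yields the first isomorphism, naturally in $M$ (via the comparison theorem for projective resolutions of $\Hom CM$) and in $N$ (via tensor functoriality).

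Next I would choose a proper flat resolution $F$ of $\Hom CM$; this exists by Remark~\ref{disc120116a}, and by Remark~\ref{disc111219a} the augmented complex $F^+$ is automatically exact, so $F$ is also a flat resolution of $\Hom CM$ in the classical sense. By Lemma~\ref{lem111220a}\eqref{lem111220a1}, the complex $G:=\Otimes CF$ is a proper $\catfc$-resolution of $M$, and the same associativity computation gives
\[
\torfcm iMN=\HH_i(\Otimes GN)\cong\HH_i(\Otimes F{(\Otimes CN)})=\Tor i{\Hom CM}{\Otimes CN},
\]
since $\Tor{*}{-}{-}$ may be computed from any flat resolution of the first argument. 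Combined with the previous step, this produces both of the claimed natural isomorphisms.

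Finally, to see that these isomorphisms identify $\varrho_i$ with the identity on $\Tor i{\Hom CM}{\Otimes CN}$, I would realize the lift $Q^+\to G^+$ used in Construction~\ref{constr111220a} explicitly: take any chain map $\phi\colon P^+\to F^+$ over $\mathrm{id}_{\Hom CM}$ (which exists since $P$ is projective and $F^+$ is exact) and observe that $\Otimes C\phi\colon Q^+\to G^+$ is a valid choice of the comparison map, since it restricts to $\xi^C_M\circ(\Otimes C{\mathrm{id}})$ in degree~$-1$. Under the isomorphisms above, $\varrho_i$ is then induced by $\Otimes\phi{(\Otimes CN)}$, which is a quasiisomorphism between flat resolutions of $\Hom CM$ tensored with $\Otimes CN$; hence $\varrho_i$ is an isomorphism. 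I expect the main technical point to be this last identification, in particular verifying that $\Otimes C\phi$ is a legitimate choice for the (homotopy-unique) comparison $Q^+\to G^+$ of Construction~\ref{constr111220a}; once that is in place, the rest follows from standard facts about classical $\Tor$.
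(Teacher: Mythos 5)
Your proposal is correct and follows essentially the same route as the paper's proof: transport a projective resolution $P$ and a proper flat resolution $F$ of $\Hom CM$ through $\Otimes C-$ via Lemma~\ref{lem111220a}, use tensor associativity to identify both relative Tors with $\Tor i{\Hom CM}{\Otimes CN}$, and realize $\varrho_i$ as $\Otimes C\phi$ for a lift $\phi\colon P\to F$ of the identity, so that it computes the standard comparison between a projective and a flat resolution in classical Tor. Your extra care in noting that $F^+$ is automatically exact (so $F$ is a flat resolution in the classical sense) and that $\Otimes C\phi$ is a legitimate choice of the homotopy-unique comparison map matches the paper's argument.
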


\begin{proof}
Let $F$ be a proper flat resolution of $\Hom CM$.
Lemma~\ref{lem111220a}\eqref{lem111220a1} implies that $\Otimes CF$ is a proper $\catfc$-resolution of $M$,
so we have
\begin{align*}
\torfcm iMN
&\cong\HH_i(\Otimes{(\Otimes CF)}{N})\\
&\cong\HH_i(\Otimes{F}{(\Otimes CN)})\\
&\cong\Tor i{\Hom CM}{\Otimes CN}.
\end{align*}
The naturality of this isomorphism comes from the naturality of the constructions,
and similarly for $\torpcm iMN$.

Let $P\to F$ be a lift of the identity map on $\Hom CM$.
Then the induced map $(\Otimes{C}{P})^{\pm}\to(\Otimes{C}{F})^{\pm}$ is of the form
$Q^+\to  G^+$, as in Construction~\ref{constr111220a}.
It follows that $\varrho_i(M,N)$ is the map gotten by taking homology
in the map 
$$\Otimes{(\Otimes{C}{P})}N\to\Otimes{(\Otimes{C}{F})}N.$$
Of course, this is equivalent to taking homology
in the map 
$$\Otimes{P}{(\Otimes{C}{N})}\to\Otimes{F}{(\Otimes{C}{N})}.$$
The fact that $\Tor i{\Hom CM}{\Otimes CN}$ can be computed using $P$ or $F$ implies that
the induced maps on homology are isomorphisms, as desired.
\end{proof}

The assumptions on $\mathbb L$ in the next result are satisfied, e.g., when $\mathbb L$ is exact and $L''\in\catac(R)$.

\begin{cor}
\label{cor111220b}
Let $\mathbb{L}=(0\to L'\to L\to L''\to 0)$ be a complex of $R$-modules
such that  $\Otimes C{\mathbb L}$ is exact.
Then there are  long exact sequences
\begin{gather*}
\cdots\torpcm 1{N}{L''}\to\torpcm 0{N}{L'}\to\torpcm 0{N}{L}\to\torpcm 0{N}{L''}\to 0\\
\cdots\torfcm 1{N}{L''}\to\torfcm 0{N}{L'}\to\torfcm 0{N}{L}\to\torfcm 0{N}{L''}\to 0
\end{gather*}
that are natural in $\mathbb{L}$ and $N$.
\end{cor}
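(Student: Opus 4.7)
The plan is to reduce the statement to the corresponding long exact sequences for absolute $\Tor$ via the identifications provided by Theorem~\ref{prop110730a'}. The key observation is that, although the hypothesis ``$\mathbb{L}$ is exact'' is not enough to guarantee that $\mathbb{L}$ stays exact after tensoring with resolutions in $\catpc(R)$ or $\catfc(R)$, the stronger hypothesis that $\Otimes C{\mathbb{L}}$ is exact translates, via Theorem~\ref{prop110730a'}, into exactness of a short exact sequence of modules to which absolute $\Tor$ can be applied.

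First I would combine Theorem~\ref{prop110730a'} with the symmetry isomorphism $\torpcm iXY\cong\tormpc iYX$ recorded in Remark~\ref{disc120121a} to obtain, for each $R$-module $L^{*}\in\{L',L,L''\}$, natural isomorphisms
\begin{equation*}
\torpcm i N{L^{*}}\;\cong\;\tormpc i{L^{*}}N\;\cong\;\Tor i{\Hom C N}{\Otimes C{L^{*}}}\;\cong\;\torfcm i N{L^{*}}.
\end{equation*}
The naturality in $L^{*}$ and in $N$ is crucial; it comes directly from the naturality built into Theorem~\ref{prop110730a'}.

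Next I would use the hypothesis that the sequence
\begin{equation*}
0\to \Otimes C{L'}\to\Otimes C L\to\Otimes C{L''}\to 0
\end{equation*}
is exact. Applying the standard long exact sequence of absolute $\Tor_*^R(\Hom C N,-)$ to this short exact sequence produces a long exact sequence
\begin{equation*}
\cdots\to \Tor 1{\Hom C N}{\Otimes C{L''}}\to \Tor 0{\Hom C N}{\Otimes C{L'}}\to \Tor 0{\Hom C N}{\Otimes C L}\to \Tor 0{\Hom C N}{\Otimes C{L''}}\to 0.
\end{equation*}
Translating each term through the natural isomorphisms of the first step, and using that the connecting homomorphisms of absolute $\Tor$ are natural in the short exact sequence, yields the two desired long exact sequences simultaneously (the $\catpc$-version and the $\catfc$-version coincide under these isomorphisms).

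The only point requiring care is verifying that the connecting maps produced from the absolute $\Tor$ long exact sequence really correspond to the ``relative'' connecting maps that one would construct directly from proper resolutions. This, however, is automatic from the naturality of the isomorphism in Theorem~\ref{prop110730a'} together with the naturality of the absolute $\Tor$ long exact sequence, so no separate diagram chase is required beyond invoking these naturality statements.
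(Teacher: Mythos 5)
Your proposal is correct and is essentially the paper's own argument: the paper likewise notes that $0\to\Otimes C{L'}\to\Otimes CL\to\Otimes C{L''}\to 0$ is exact by hypothesis and then takes the long exact sequence of $\Tor i{\Hom CN}{-}$, transporting it back through the natural isomorphisms of Theorem~\ref{prop110730a'}. The detour through $\tormpc i{L^{*}}{N}$ is unnecessary (Theorem~\ref{prop110730a'} applies directly with first argument $N$ and second argument $L^{*}$), but this is harmless.
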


\begin{proof}
Apply $\Otimes C-$ to get the exact sequence
$$0\to \Otimes C{L'}\to \Otimes CL\to \Otimes C{L''}\to 0.$$
Now take the long exact sequence in $\Tor i{\Hom CN}{-}$
using Theorem~\ref{prop110730a'}.
\end{proof}

Theorem~\ref{prop110730a'} allows for a certain amount of flexibility for relative Tor,
in the same way that flat and projective resolutions give flexibility for absolute Tor. 
For instance, in the next result, it is not clear that a finitely generated module $M$ has
a proper $\catfc$-resolution $L$ such that each $L_i$ is finitely generated.

\begin{prop}\label{prop111221b}
Assume that $M$ and $N$ are finitely generated over $R$.
For all $i$ the modules $\torpcm iMN$ and $\torfcm iMN$
are finitely generated over $R$.
\end{prop}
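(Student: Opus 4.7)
The plan is to reduce the question to a fact about absolute Tor via Theorem~\ref{prop110730a'}, thereby avoiding the difficulty of producing a proper $\catfc$-resolution (or even a proper $\catpc$-resolution) consisting of finitely generated modules.

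First, Theorem~\ref{prop110730a'} provides a natural isomorphism
$$\torpcm iMN\cong \Tor i{\Hom CM}{\Otimes CN}\cong \torfcm iMN,$$
so it suffices to show that the middle module is finitely generated for each $i$.

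Next, since $R$ is noetherian and $C$ is finitely generated, the standing finite-generation hypotheses on $M$ and $N$ imply that $\Hom CM$ and $\Otimes CN$ are both finitely generated $R$-modules. Consequently, $\Hom CM$ admits a resolution $P$ by finitely generated free (in particular, projective) $R$-modules, and $\Tor i{\Hom CM}{\Otimes CN}$ is the $i$-th homology of $\Otimes{P}{(\Otimes CN)}$, each term of which is finitely generated. Since $R$ is noetherian, the subquotients $\ker/\operatorname{im}$ computing this homology are finitely generated as well, completing the argument.

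There is no real obstacle here; the content of the proposition is precisely that Theorem~\ref{prop110730a'} lets one sidestep the fact, noted just before the statement, that it is not transparent how to build a proper $\catfc$-resolution out of finitely generated modules. The translation to absolute Tor of finitely generated modules is then routine.
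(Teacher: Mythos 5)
Your argument is correct and is essentially the paper's own first proof: both reduce via Theorem~\ref{prop110730a'} to $\Tor i{\Hom CM}{\Otimes CN}$, which is finitely generated since $\Hom CM$ and $\Otimes CN$ are finitely generated over the noetherian ring $R$. (The paper also notes an equivalent alternative using a degreewise finite proper $\catpc$-resolution $\Otimes CF$, but the substance is the same.)
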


\begin{proof}
Since $C$, $M$ and $N$ are finitely generated, so are $\Hom CM$ and $\Otimes CN$,
and hence so is $\Tor i{\Hom CM}{\Otimes CN}$.
Thus, the desired conclusion follows from Theorem~\ref{prop110730a'}.

Alternately, given a degreewise finite $R$-free resolution $F$ of $\Hom CM$,
the complex $\Otimes CF$ is a degreewise finite proper $\catpc$-resolution of $M$
by Lemma~\ref{lem111220a}\eqref{lem111220a3}. It follows that the complex
$\Otimes{(\Otimes CF)}{N}$ is degreewise finite, so the homology modules
$\torpcm iMN\cong\torfcm iMN$
are finitely generated over $R$.
\end{proof}

\begin{prop}\label{prop111228a}
Let $\{N_j\}_{j\in J}$ be a set of $R$-modules.
\begin{enumerate}[\rm(a)]
\item \label{prop111228a1}
For each $i$, there are isomorphisms
\begin{align*}
\textstyle\torpcm i{M}{\coprod_jN_j}
&\textstyle\cong\coprod_j\torpcm i{M}{N_j}
\\
\textstyle\torpcm i{\coprod_jN_j}{M}
&\textstyle\cong\coprod_j\torpcm i{N_j}{M}
\end{align*}
and similarly for $\tor^{\catfc\catm}$.
\item \label{prop111228a2}
If $M$ is finitely generated, then for each $i$, there are isomorphisms
\begin{align*}
\textstyle\torpcm i{M}{\prod_jN_j}
&\textstyle\cong\prod_j\torpcm i{M}{N_j}
\\
\textstyle\torpcm i{\prod_jN_j}{M}
&\textstyle\cong\prod_j\torpcm i{N_j}{M}
\end{align*}
and similarly for $\tor^{\catfc\catm}$.
\end{enumerate}
\end{prop}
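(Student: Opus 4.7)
The plan is to derive both parts by combining Theorem~\ref{prop110730a'}, Lemma~\ref{lem111228a}, and the degreewise-finite proper $\catpc$-resolution implicit in Proposition~\ref{prop111221b}, together with the standard exactness of (co)products in $\catm(R)$ and the commutation of $\otimes$ with coproducts in general and with products for finitely presented modules. The $\tor^{\catfc\catm}$ statements will in each case follow for free from the $\tor^{\catpc\catm}$ statements via Theorem~\ref{prop110730a'}.

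For part (a), I would handle the two isomorphisms by resolving in the appropriate slot. For the first, fix a proper $\catpc$-resolution $Q$ of $M$ and note that $\Otimes{Q}{-}$ commutes with coproducts, while homology commutes with arbitrary coproducts since direct sums of $R$-modules are exact; taking $\HH_i$ of the resulting isomorphism of complexes
\[
\textstyle\Otimes Q{\coprod_jN_j}\cong\coprod_j\Otimes Q{N_j}
\]
gives the claim. For the second, for each $j$ choose a proper $\catpc$-resolution $Y_j$ of $N_j$. Lemma~\ref{lem111228a}\eqref{lem111228a2} tells me that $\coprod_jY_j$ is a proper $\catpc$-resolution of $\coprod_jN_j$, so $\torpcm i{\coprod_jN_j}{M}=\HH_i(\Otimes{(\coprod_jY_j)}{M})\cong\coprod_j\HH_i(\Otimes{Y_j}{M})$ as required.

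For part (b), the finite generation of $M$ lets me use the degreewise-finite proper $\catpc$-resolution $Q:=\Otimes CF$ constructed in the second proof of Proposition~\ref{prop111221b} from a degreewise finite free resolution $F$ of $\Hom CM$; each $Q_i$ is finitely generated, and hence finitely presented, over the noetherian ring $R$. For finitely presented $Q_i$, tensor commutes with arbitrary products, and taking $\HH_i$ (which commutes with products by exactness of products in $\catm(R)$) of the resulting isomorphism $\Otimes Q{\prod_jN_j}\cong\prod_j\Otimes Q{N_j}$ of complexes yields the first isomorphism. For the second, I would first apply Theorem~\ref{prop110730a'} to rewrite
\[
\textstyle\torpcm i{\prod_jN_j}{M}\cong\Tor i{\Hom C{\prod_jN_j}}{\Otimes CM}\cong\Tor i{\prod_j\Hom C{N_j}}{\Otimes CM},
\]
using that $\Hom C-$ commutes with products. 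Since $\Otimes CM$ is finitely generated over the noetherian ring $R$, it admits a resolution by finitely generated free modules, and the identical ``tensor commutes with products when one factor is finitely presented'' argument now commutes the product outside of $\Tor$. A final application of Theorem~\ref{prop110730a'} recovers $\prod_j\torpcm i{N_j}{M}$.

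The main obstacle, if there is one, is only bookkeeping: ensuring that finite presentation is invoked correctly to commute tensor (or $\Tor$ via a finite-free resolution) with arbitrary products. The essential content has already been packaged by Lemma~\ref{lem111228a} (which supplies the coproduct of proper $\catpc$-resolutions) and by the degreewise-finite resolution produced in Proposition~\ref{prop111221b}; beyond that, only standard commutations of (co)limits with tensor and homology are needed.
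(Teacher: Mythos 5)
Your proof is correct. For part (a) and for the first isomorphism of part (b) it coincides with the paper's argument: resolve $M$ by a proper $\catpc$-resolution and commute $\otimes$ and $\HH_i$ with coproducts; use Lemma~\ref{lem111228a}\eqref{lem111228a2} for the coproduct in the resolved slot; and use the degreewise finite proper $\catpc$-resolution $\Otimes CF$ from Proposition~\ref{prop111221b} together with the fact that tensoring with a finitely presented module commutes with arbitrary products (finitely generated suffices here since $R$ is noetherian). The only genuine divergence is in the second isomorphism of part (b). The paper stays at the level of proper resolutions: it invokes Lemma~\ref{lem111228a}\eqref{lem111228a3} to realize $\prod_jX_j$ as a proper $\catfc$-resolution of $\prod_jN_j$, tensors with the finitely presented module $M$, and then transfers to $\tor^{\catpc\catm}$ via Theorem~\ref{prop110730a'}. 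You instead apply Theorem~\ref{prop110730a'} first, rewrite $\torpcm i{\prod_jN_j}{M}$ as $\Tor i{\prod_j\Hom C{N_j}}{\Otimes CM}$ using that $\Hom C-$ preserves products, and then commute the product past absolute $\Tor$ via a degreewise finite free resolution of $\Otimes CM$. Both routes ultimately rest on the same two ingredients (balance via Theorem~\ref{prop110730a'} and finite presentation of the fixed argument); the paper's version has the small advantage of exercising Lemma~\ref{lem111228a}\eqref{lem111228a3}, which is why that lemma was stated for products of $\catfc$-resolutions in the first place, while yours avoids any appeal to properness of the product resolution.
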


\begin{proof}
\eqref{prop111228a1}
For the first isomorphism, let $X$ be a proper $\catpc$-resolution of $M$,
and use the isomorphism
$\Otimes X{\coprod_jN_j}\cong\coprod_j\Otimes X{N_j}$.
For the second isomorphism, use Lemma~\ref{lem111228a}\eqref{lem111228a2}.
The isomorphisms for $\tor^{\catfc\catm}$ follow using Theorem~\ref{prop110730a'}.

\eqref{prop111228a2}
If $M$ is finitely generated, then $\Otimes -M$ commutes with arbitrary products.
Hence, the  isomorphism 
$\torfcm i{\prod_jN_j}{M}
\textstyle\cong\prod_j\torfcm i{N_j}{M}$
follows from Lemma~\ref{lem111228a}\eqref{lem111228a3},
and the corresponding  isomorphisms for $\tor^{\catpc\catm}$ follow using Theorem~\ref{prop110730a'}.
Finally, as in the proof of Proposition~\ref{prop111221b}, the module $M$ has a proper $\catpc$-resolution
$X$ such that each $X_i$ is finitely generated. Hence, the functor $\Otimes X-$ respects arbitrary products,
and the final isomorphisms follow.
\end{proof}

Next, we discuss flat base change. 

\begin{prop}\label{prop111221a}
Let $R\to S$ be a flat ring homomorphism.
Then for all $i$ there are $S$-module isomorphisms
\begin{align*}
\Tor[\catp_{\Otimes{S}{C}}\catm] i{\Otimes{S}{M}}{\Otimes{S}{N}}&\cong\Otimes{S}{\torpcm iMN}
\\
\Tor[\catf_{\Otimes{S}{C}}\catm] i{\Otimes{S}{M}}{\Otimes{S}{N}}&\cong\Otimes{S}{\torfcm iMN}.
\end{align*}
\end{prop}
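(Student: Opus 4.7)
The plan is to reduce each isomorphism to a statement about a single resolved complex and then invoke the flatness of $R\to S$. For the first isomorphism I would apply Proposition~\ref{lem111221a} directly; for the second, I would combine the first with Theorem~\ref{prop110730a'}.

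For the projective case, fix a proper $\catpc$-resolution $L$ of $M$ over $R$. Proposition~\ref{lem111221a} shows that $\Otimes SL$ is a proper $\catp_{\Otimes SC}$-resolution of $\Otimes SM$ over $S$, where one uses that $\Otimes SC$ is a semidualizing $S$-module (Remark~\ref{disc111219bx}). Thus
$$\Tor[\catp_{\Otimes SC}\catm]{i}{\Otimes SM}{\Otimes SN} = \HH_i\bigl(\Otimes[S]{(\Otimes SL)}{(\Otimes SN)}\bigr).$$
A routine associativity/base-change computation produces a natural isomorphism of $S$-complexes $\Otimes[S]{(\Otimes SL)}{(\Otimes SN)}\cong\Otimes S{(\Otimes LN)}$, and since $S$ is flat over $R$ the functor $\Otimes S-$ commutes with homology. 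Chaining these identifications yields $\Otimes S{\torpcm iMN}$, as desired.

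For the flat case, Theorem~\ref{prop110730a'} applied over $S$ (with semidualizing module $\Otimes SC$) identifies $\Tor[\catf_{\Otimes SC}\catm]{i}{\Otimes SM}{\Otimes SN}$ with $\Tor[\catp_{\Otimes SC}\catm]{i}{\Otimes SM}{\Otimes SN}$, while the same theorem applied over $R$ identifies $\torpcm iMN$ with $\torfcm iMN$. Splicing these with the isomorphism already proved for the projective case produces the second claim.

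The one strategic point worth flagging is the decision to route the $\catfc$ case through the $\catpc$ case. A direct argument via an $\catfc$-analogue of Proposition~\ref{lem111221a} is not available: the paper explicitly flags, in the remark immediately preceding Proposition~\ref{lem111221a}, that base change for proper $\catfc$-resolutions is not known in general. Passing through Theorem~\ref{prop110730a'} therefore sidesteps this obstacle and delivers both isomorphisms simultaneously with essentially no additional work.
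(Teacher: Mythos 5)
Your proof is correct and follows essentially the same route as the paper's: reduce the $\catfc$ statement to the $\catpc$ statement via Theorem~\ref{prop110730a'}, then prove the $\catpc$ case by base-changing a proper $\catpc$-resolution via Proposition~\ref{lem111221a} and commuting $\Otimes S-$ past homology using flatness. The strategic remark about why one must route around the missing $\catfc$ base-change result matches the paper's reasoning exactly.
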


\begin{proof}
In view of Theorem~\ref{prop110730a'}, it suffices to justify the first isomorphism.
Let $L$ be a 
proper $\catpc$-resolution  of $M$.
Proposition~\ref{lem111221a} implies 
that $\Otimes S{L}$ is a proper $\catp_{\Otimes{S}{C}}$-resolution of $\Otimes{S}M$.
This explains the first step in the next sequence
\begin{align*}
\Tor[\catp_{\Otimes{S}{C}}\catm] i{\Otimes{S}{M}}{\Otimes{S}{N}}
&\cong\HH_i(\Otimes[S]{(\Otimes{S}{L})}{(\Otimes{S}{N})}) \\
&\cong\HH_i(\Otimes{S}{(\Otimes{L}{N})}) \\
&\cong\Otimes{S}{\HH_i(\Otimes{L}{N})} \\
&\cong\Otimes{S}{\torpcm iMN}.
\end{align*}
The third isomorphism is by the $R$-flatness of $S$.
\end{proof}

Of course, localization is a special case of flat base change:

\begin{cor}\label{cor111221b}
Let $U$ be a multiplicatively closed subset of $R$.
Then for all $i$ there are $U^{-1}R$-module isomorphisms
\begin{align*}
\Tor[\catp_{U^{-1}C}\catm] i{U^{-1}M}{U^{-1}N}&\cong U^{-1}\torpcm iMN
\\
\Tor[\catf_{U^{-1}C}\catm] i{U^{-1}M}{U^{-1}N}&\cong U^{-1}\torfcm iMN.
\end{align*}
\end{cor}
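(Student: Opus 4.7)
The plan is to invoke Proposition~\ref{prop111221a} directly, taking the flat ring homomorphism to be the localization map $R \to U^{-1}R$. Since localization is flat, the hypotheses of that proposition are satisfied with $S = U^{-1}R$, so we obtain $U^{-1}R$-module isomorphisms
\begin{align*}
\Tor[\catp_{\Otimes{U^{-1}R}{C}}\catm] i{\Otimes{U^{-1}R}{M}}{\Otimes{U^{-1}R}{N}}
&\cong\Otimes{U^{-1}R}{\torpcm iMN} \\
\Tor[\catf_{\Otimes{U^{-1}R}{C}}\catm] i{\Otimes{U^{-1}R}{M}}{\Otimes{U^{-1}R}{N}}
&\cong\Otimes{U^{-1}R}{\torfcm iMN}.
\end{align*}

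To finish, I would apply the standard natural identifications $\Otimes{U^{-1}R}{X} \cong U^{-1}X$ for any $R$-module $X$; in particular $\Otimes{U^{-1}R}{M}\cong U^{-1}M$, $\Otimes{U^{-1}R}{N}\cong U^{-1}N$, $\Otimes{U^{-1}R}{C}\cong U^{-1}C$, and $\Otimes{U^{-1}R}{\torpcm iMN}\cong U^{-1}\torpcm iMN$. Substituting these into the displayed isomorphisms yields the claim. Since the corollary is a direct specialization of the preceding proposition via a well-known flat extension, there is no substantive obstacle; the only minor point to keep in mind is that under these identifications the subcategories $\catp_{\Otimes{U^{-1}R}{C}}(U^{-1}R)$ and $\catf_{\Otimes{U^{-1}R}{C}}(U^{-1}R)$ match $\catp_{U^{-1}C}(U^{-1}R)$ and $\catf_{U^{-1}C}(U^{-1}R)$, which follows tautologically from the definitions of these subcategories.
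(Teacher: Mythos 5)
Your argument is exactly the paper's: Corollary~\ref{cor111221b} is stated immediately after Proposition~\ref{prop111221a} with the remark that localization is a special case of flat base change, so the intended proof is precisely the specialization $S=U^{-1}R$ together with the standard identifications $\Otimes{U^{-1}R}{X}\cong U^{-1}X$. Your proposal is correct and matches the paper's approach.
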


\begin{prop}\label{prop111221c}
Let $R\to S$ be a flat ring homomorphism,
and assume that $N$ is an $S$-module.
Then for all $i$ there are $S$-module isomorphisms
\begin{align*}
\Tor[\catp_{\Otimes{S}{C}}\catm] i{\Otimes{S}{M}}{N}&\cong\torpcm iMN
\\
\Tor[\catf_{\Otimes{S}{C}}\catm] i{\Otimes{S}{M}}{N}&\cong\torfcm iMN.
\end{align*}
\end{prop}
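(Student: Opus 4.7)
The plan is to reduce everything to the construction of a proper $\catp_{\Otimes SC}$-resolution of $\Otimes SM$ provided by Proposition~\ref{lem111221a}, and then exploit the fact that $N$ already carries an $S$-module structure to collapse the base change.

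First, I would pick a proper $\catpc$-resolution $L$ of $M$ over $R$. Proposition~\ref{lem111221a} then yields that $\Otimes SL$ is a proper $\catp_{\Otimes SC}$-resolution of $\Otimes SM$ over $S$. Hence, by the definition of the relative Tor,
\begin{equation*}
\Tor[\catp_{\Otimes{S}{C}}\catm] i{\Otimes SM}{N}
\cong \HH_i\bigl(\Otimes[S]{(\Otimes SL)}{N}\bigr).
\end{equation*}
Because $N$ is an $S$-module, associativity of tensor products gives a natural $S$-module isomorphism of complexes
\begin{equation*}
\Otimes[S]{(\Otimes SL)}{N} \cong \Otimes{L}{(\Otimes[S]{S}{N})} \cong \Otimes{L}{N},
\end{equation*}
where the $S$-action on the right is induced from $N$. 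Taking homology gives
\begin{equation*}
\Tor[\catp_{\Otimes{S}{C}}\catm] i{\Otimes SM}{N}
\cong \HH_i(\Otimes LN)
= \torpcm iMN,
\end{equation*}
which is the first isomorphism.

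For the second isomorphism, I would simply invoke Theorem~\ref{prop110730a'}, applied both over $R$ and over $S$ (noting that $\Otimes SC$ is a semidualizing $S$-module by Remark~\ref{disc111219bx}). This gives natural isomorphisms
\begin{equation*}
\torpcm iMN \cong \torfcm iMN
\qquad\text{and}\qquad
\Tor[\catp_{\Otimes{S}{C}}\catm]{i}{\Otimes SM}{N}
\cong
\Tor[\catf_{\Otimes{S}{C}}\catm]{i}{\Otimes SM}{N},
\end{equation*}
so the isomorphism for the $\catf$-version follows formally from the one just established.

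There is no serious obstacle here: the only substantive ingredient is Proposition~\ref{lem111221a}, which guarantees that the proper $\catpc$-resolution $L$ over $R$ base-changes to a proper resolution over $S$ of the right flavor. The rest is bookkeeping with associativity of tensor products and Theorem~\ref{prop110730a'}. The statement's structure, in which only $M$ is extended to an $S$-module (while $N$ already lives over $S$), is precisely what makes the base-change factor $S$ cancel, so that no direct appeal to Proposition~\ref{prop111221a} is required.
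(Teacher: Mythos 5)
Your proof is correct and follows exactly the paper's own argument: take a proper $\catpc$-resolution $L$ of $M$, apply Proposition~\ref{lem111221a} to base-change it, use the cancellation $\Otimes[S]{(\Otimes SL)}{N}\cong\Otimes LN$, and deduce the flat version from Theorem~\ref{prop110730a'}. No differences worth noting.
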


\begin{proof}
As in the proof of Proposition~\ref{prop111221a}, the first isomorphism in the next sequence
is from Proposition~\ref{lem111221a}
\begin{align*}
\Tor[\catp_{\Otimes{S}{C}}\catm] i{\Otimes{S}{M}}{N}
&\cong\HH_i(\Otimes[S]{(\Otimes{S}{L})}{N}) 
\cong\HH_i(\Otimes{L}{N}) 
\cong\torpcm iMN.
\end{align*}
This is the first of our desired isomorphisms; the second one follows by~\ref{prop110730a'}.
\end{proof}

\begin{prop}\label{prop111221d}
Let $R\to S$ be a flat ring homomorphism,
and assume that $M$ is an $S$-module.
Then for all $i$ there are $S$-module isomorphisms
\begin{align*}
\Tor[\catp_{\Otimes{S}{C}}\catm] i{\Otimes{S}{M}}{N}&\cong\torpcm iMN
\\
\Tor[\catf_{\Otimes{S}{C}}\catm] i{\Otimes{S}{M}}{N}&\cong\torfcm iMN.
\end{align*}
\end{prop}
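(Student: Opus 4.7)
My plan follows the template of the proof of Proposition~\ref{prop111221c}, but with two twists: I would establish the second ($\catfc$) isomorphism first (using Proposition~\ref{lem111224b} as the key input) and then deduce the first ($\catpc$) isomorphism from it via Theorem~\ref{prop110730a'}. The reason for this detour is that, unlike the situation of Proposition~\ref{prop111221c}, there is no mechanism here to pull a proper $\catp_{\Otimes SC}(S)$-resolution back to a proper $\catpc(R)$-resolution: $S$-projective modules are only $R$-flat in general, so the analog of Proposition~\ref{lem111221a} fails. We do, however, have a perfectly good analog in the flat case, namely Proposition~\ref{lem111224b}.

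For the $\catfc$ isomorphism, I would let $L$ be a proper $\catf_{\Otimes SC}(S)$-resolution of $M$ over $S$, which exists since $\catf_{\Otimes SC}(S)$ is precovering (Remark~\ref{disc111219b}). Because $M$ is already an $S$-module, the natural map $\Otimes SM\to M$ is an isomorphism, so the same $L$ serves as a proper $\catf_{\Otimes SC}(S)$-resolution of $\Otimes SM$. The critical point is then Proposition~\ref{lem111224b}, which simultaneously makes $L$ a proper $\catfc(R)$-resolution of $M$ over $R$. Chaining these observations together, I expect to obtain
\[
\Tor[\catf_{\Otimes SC}\catm]i{\Otimes SM}N
\cong \HH_i(\Otimes[S] LN)
\cong \HH_i(\Otimes[R] LN)
\cong \torfcm iMN,
\]
where the outer isomorphisms are the definitions of relative Tor over $S$ and over $R$ using the single resolution $L$, and the middle step uses the identification $\Otimes[S] XN\cong\Otimes[R] XN$ for an $S$-complex $X$ and $S$-module $N$, exactly as in the middle step of the proof of Proposition~\ref{prop111221c}.

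For the $\catpc$ isomorphism, I would then apply Theorem~\ref{prop110730a'} over $S$ to identify the left-hand side with $\Tor[\catf_{\Otimes SC}\catm] i{\Otimes SM}N$, invoke the $\catfc$ case just established, and finally apply Theorem~\ref{prop110730a'} over $R$ to convert $\torfcm iMN$ into $\torpcm iMN$. Symbolically,
\[
\Tor[\catp_{\Otimes SC}\catm]i{\Otimes SM}N
\cong \Tor[\catf_{\Otimes SC}\catm]i{\Otimes SM}N
\cong \torfcm iMN
\cong \torpcm iMN.
\]

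The main obstacle is really the $\catfc$ step, and its subtlety lies entirely in the asymmetric behavior of $\catp_{\Otimes SC}$ versus $\catf_{\Otimes SC}$ with respect to restriction of scalars. Once one recognizes Proposition~\ref{lem111224b} as the correct bridge and resolves the interpretive question about $N$ the same way as in the proof of Proposition~\ref{prop111221c} (namely, by treating $N$ as an $S$-module so that $\Otimes[S] LN$ and $\Otimes[R] LN$ agree), the argument collapses to a single three-arrow computation and a clean appeal to Theorem~\ref{prop110730a'} on both sides.
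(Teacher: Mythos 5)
Your skeleton matches the paper's proof exactly: the paper likewise takes a proper $\catf_{\Otimes{S}{C}}$-resolution $L$ of $M$ over $S$, invokes Proposition~\ref{lem111224b} to view the same complex $L$ as a proper $\catfc$-resolution of $M$ over $R$, computes both relative Tors from this single resolution as in Proposition~\ref{prop111221c}, and obtains the $\catpc$-isomorphism from Theorem~\ref{prop110730a'}. Your diagnosis of why one must route through the flat case --- restriction of scalars takes $S$-projectives only to $R$-flats, so there is no projective analogue of Proposition~\ref{lem111224b} --- is also correct.

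The gap lies in the two identifications you use to pass between the $S$-tensor and the $R$-tensor; both are false in general. For an $S$-module $M$ the natural map $\Otimes{S}{M}\to M$ is a surjection but not an isomorphism: with $R=k\subseteq S=k[x]$ and $M=S$ one gets $\Otimes{S}{M}\cong k[x,y]$. Likewise, for an $S$-complex $L$ and an $S$-module $N$ one does not have $\Otimes[S]{L}{N}\cong\Otimes{L}{N}$; there is only a natural surjection $\Otimes{L}{N}\onto\Otimes[S]{L}{N}$ (take $L=N=S$). The middle step of Proposition~\ref{prop111221c} is a different statement, namely the cancellation $\Otimes[S]{(\Otimes{S}{L})}{N}\cong\Otimes{L}{N}$ for an $R$-complex $L$, and its correct incarnation here puts the base change on the \emph{second} argument: $\Otimes[S]{L}{(\Otimes{S}{N})}\cong\Otimes{L}{N}$. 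What the resolution $L$ and Proposition~\ref{lem111224b} actually yield is therefore
$$\Tor[\catf_{\Otimes{S}{C}}\catm]{i}{M}{\Otimes{S}{N}}\cong\HH_i(\Otimes[S]{L}{(\Otimes{S}{N})})\cong\HH_i(\Otimes{L}{N})\cong\torfcm iMN,$$
with $M$ itself (not $\Otimes{S}{M}$) in the first slot and $\Otimes{S}{N}$ in the second; note that in this proposition only $M$ is assumed to be an $S$-module, so the second slot of the $S$-relative Tor cannot carry $N$ unadorned. Thus, while your choice of resolution and of key lemma is exactly the paper's, the step closing the computation does not go through as written and needs to be replaced by the cancellation isomorphism above.
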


\begin{proof}
Let $L$ be a proper $\catf_{\Otimes{S}{C}}$-resolution of $M$ over $S$.
Proposition~\ref{lem111224b}  shows that $L$ is a proper $\catfc$-resolution of $M$ over $R$.
The desired isomorphisms 
now follow as in the proof of Proposition~\ref{prop111221c}.
\end{proof}

The next three results provide relative versions of some standard results for absolute homology,
beginning with Hom-tensor adjointness.

\begin{prop} \label{thm111230a}
Let $I$ be an injective $R$-module.
For all $i \geq 0$ one has
\begin{align}
\label{thm111230a1} \extpcm {i} {M} {\hom NI} &\cong \hom {\torpcm {i} MN}{I}\\
\label{thm111230a2} \extmic {i} {M} {\hom NI} &\cong \hom {\tormpc {i} MN}{I}.
\end{align}
\end{prop}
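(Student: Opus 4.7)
The proof I envisage follows the classical derivation of $\Ext i M{\Hom NI} \cong \Hom{\Tor iMN}{I}$ for injective $I$: pair chain-level Hom-tensor adjointness with exactness of $\Hom -I$, which lets that functor commute with homology and send $\HH^{-i}$ to $\Hom{\HH_i(-)}{I}$.

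For~\eqref{thm111230a1}, pick a proper $\catpc$-resolution $L$ of $M$. Standard chain-level adjointness gives an isomorphism of complexes $\Hom L{\Hom NI} \cong \Hom{\Otimes LN}{I}$, and taking $\HH^{-i}$ on both sides yields
$$\extpcm iM{\Hom NI} \cong \HH^{-i}(\Hom{\Otimes LN}{I}) \cong \Hom{\HH_i(\Otimes LN)}{I} \cong \Hom{\torpcm iMN}{I},$$
with naturality in $M$ and $N$ coming from the up-to-homotopy functoriality of proper $\catpc$-resolutions.

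For~\eqref{thm111230a2}, I would mirror this argument with a proper $\catpc$-resolution $Q$ of $N$, so that $\tormpc iMN = \HH_i(\Otimes MQ)$ and the same three-step computation produces
$$\Hom{\tormpc iMN}{I} \cong \HH^{-i}(\Hom{\Otimes MQ}{I}) \cong \HH^{-i}(\Hom M{\Hom QI}).$$
To identify the right-hand side with $\extmic iM{\Hom NI}$, the natural plan is to verify that $\Hom QI$ is a proper $\catic$-coresolution of $\Hom NI$: writing $Q_j=\Otimes C{P_j}$ with $P_j$ projective, adjointness gives $\Hom{Q_j}{I}\cong\Hom C{\Hom{P_j}{I}}$ with $\Hom{P_j}{I}$ injective, placing each term in $\catic(R)$; properness then reduces, via a further adjointness, to the known exactness of the projective resolution $\Hom C{Q^+}$ of $\Hom CN$ supplied by Lemma~\ref{lem111220a}\eqref{lem111220a4}.

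The main obstacle is establishing that the augmented complex $\Hom NI \to \Hom{Q_0}{I} \to \Hom{Q_1}{I} \to \cdots$ is actually exact, since Remark~\ref{disc111219a} cautions that $Q^+$ itself need not be exact, so one cannot simply apply $\Hom -I$ to an exact sequence. If a direct verification fails, my fallback route is to invoke the relative Ext formula $\extmic iMN \cong \Ext i{\Otimes CM}{\Otimes CN}$ (analogous to Theorem~\ref{prop110730a'}, cf.~\cite{takahashi:hasm}), the symmetric formula $\tormpc iMN \cong \Tor i{\Otimes CM}{\Hom CN}$, and the tensor-evaluation isomorphism $\Otimes C{\Hom NI} \cong \Hom{\Hom CN}{I}$ (valid since $C$ is finitely presented over the noetherian ring $R$ and $I$ is injective), then finish via the classical identity $\Ext iX{\Hom YI} \cong \Hom{\Tor iXY}{I}$.
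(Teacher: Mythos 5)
Your proof is correct, but your primary argument takes a genuinely different route from the paper's. The paper translates everything into absolute homological algebra: it cites $\extpcm iMN\cong \Ext i{\Hom CM}{\Hom CN}$ from Takahashi--White, uses Hom-tensor adjointness to rewrite $\Hom C{\Hom NI}$ as $\Hom{\Otimes CN}{I}$, invokes the classical duality $\Ext iX{\Hom YI}\cong\Hom{\Tor iXY}{I}$, and finishes with Theorem~\ref{prop110730a'}; your ``fallback'' paragraph is essentially this argument verbatim. Your primary argument instead works at the chain level with the defining resolutions, combining adjointness with exactness of $\Hom -I$ to pass that functor through homology; this is more self-contained (no appeal to the Takahashi--White translation formulas) at the cost of having to check that $\Hom QI$ is a proper $\catic$-coresolution of $\Hom NI$, which is exactly what your sketch for~\eqref{thm111230a2} supplies. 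Two small points on that verification. First, the ``main obstacle'' you identify is not one: by the definitions (see Remark~\ref{disc111219a}), proper (co)resolutions are required only to be $\Hom-{\catic}$-exact, not exact, so you never need exactness of the augmented complex $\Hom NI\to\Hom{Q_0}{I}\to\cdots$. Second, ``a further adjointness'' is slightly imprecise: for $J'=\Hom C{I'}\in\catic(R)$, adjointness gives $\Hom{\Hom{Q^+}{I}}{J'}\cong\Hom{\Otimes C{\Hom{Q^+}{I}}}{I'}$, and you then need Hom-evaluation (valid since $C$ is finitely presented and $I$ is injective, as you note at the end) to identify $\Otimes C{\Hom{Q^+}{I}}$ with $\Hom{\Hom C{Q^+}}{I}$, which is exact by Lemma~\ref{lem111220a}(\ref{lem111220a4}) and the exactness of $\Hom-I$. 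With those two points tidied up, your argument is complete.
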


\begin{proof}
The first isomorphism in the next sequence follows from~\cite[Theorem 4.1]{takahashi:hasm}
\begin{align*}
\extpcm {i} {M} {\hom NI}& \cong \Ext {i}{\Hom CM} {\Hom {C}{\hom NI}}\\
&\cong \Ext {i}{\Hom CM} {\hom {C \otimes _{R} N}{I}}\\
&\cong \hom {\Tor {i} {\Hom CM}{C \otimes _{R} N}}{I}\\
&\cong \hom {\torpcm {i} {M}{N}}{I}.
\end{align*}
The second isomorphism is by Hom-tensor adjointness, and the remaining steps follow 
from~\cite[Theorem 3.2.1]{enochs:rha} and
Theorem~\ref{prop110730a'}.
This explains~\eqref{thm111230a1}, and~\eqref{thm111230a2} is established similarly.
\end{proof}

The next result is a version of tensor evaluation for relative Ext.

\begin{prop}\label{thm111230b}
Assume that  $M$ is finitely generated over $R$, and  let $F$ be a flat $R$-module.
For all $i \geq 0$ there are isomorphisms
\begin{align}
\label{thm111230b1}\extmic {i} MN \otimes _{R} F &\cong \extmic {i} {M} {N \otimes _{R} F}\\
\label{thm111230b2}\extpcm {i} MN \otimes _{R} F& \cong \extpcm {i} {M}{N \otimes _{R} F}.
\end{align}
\end{prop}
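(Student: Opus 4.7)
The plan is to follow exactly the strategy used for Proposition~\ref{thm111230a}: translate each side of the claimed isomorphism into an absolute $\ext$ via the formulas $\extpcm iMN\cong\Ext i{\Hom CM}{\Hom CN}$ and $\extmic iMN\cong\Ext i{\Otimes CM}{\Otimes CN}$ from~\cite[Theorem 4.1]{takahashi:hasm}, invoke the classical tensor-evaluation isomorphism for absolute $\ext$, and then translate back.

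For~\eqref{thm111230b2}, the first translation formula rewrites $\extpcm iMN$ as an absolute $\ext$ whose first argument $\Hom CM$ is finitely generated (since $R$ is noetherian and $C$, $M$ are). Classical tensor evaluation for $\ext$ then yields
$$\Ext i{\Hom CM}{\Hom CN}\otimes_R F \;\cong\; \Ext i{\Hom CM}{\Hom CN\otimes_R F}.$$
Because $C$ is finitely presented and $F$ is flat, tensor evaluation for $\Hom$ gives a further isomorphism $\Hom CN\otimes_R F\cong\Hom C{N\otimes_R F}$, and running the Takahashi-White translation in reverse identifies the right-hand side with $\extpcm iM{N\otimes_R F}$.

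For~\eqref{thm111230b1}, the argument repeats with $\extmic iMN\cong\Ext i{\Otimes CM}{\Otimes CN}$. Since $\Otimes CM$ is finitely generated, classical tensor evaluation for $\ext$ applies directly, and the remaining step is simply tensor-product associativity: $(\Otimes CN)\otimes_R F\cong\Otimes C{N\otimes_R F}$. No tensor-Hom evaluation is needed in this case.

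The only place where I anticipate needing to pay attention is the invocation of the Takahashi-White translation formulas, namely that they do not carry a hidden semidualizing-type hypothesis on $N$ that would obstruct their use here; given how they are applied without such restrictions in the proof of Proposition~\ref{thm111230a}, this should be routine. All other ingredients---classical tensor evaluation for $\ext$, tensor evaluation for $\Hom$, and tensor-product associativity---are standard, so I expect the full proof to be short and essentially parallel to that of Proposition~\ref{thm111230a}.
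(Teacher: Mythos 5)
Your proposal is correct and matches the paper's own argument: the paper likewise rewrites $\extmic iMN$ as $\Ext i{\Otimes CM}{\Otimes CN}$ via \cite[Theorem 4.1]{takahashi:hasm}, applies classical tensor evaluation for $\ext$ \cite[Theorem 3.2.15]{enochs:rha} using that $\Otimes CM$ is finitely generated, finishes with tensor associativity, and then notes that \eqref{thm111230b2} is established similarly (exactly your $\Hom CN\otimes_RF\cong\Hom C{N\otimes_RF}$ step). Your worry about hidden hypotheses on $N$ is unfounded, as the translation formulas hold for arbitrary modules.
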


\begin{proof}
The  isomorphism~\eqref{thm111230b1} follows from the next display
\begin{align*}
\extmic {i} MN \otimes _{R} F & \cong \Ext {i}{C \otimes _{R} M} {C \otimes _{R} N} \otimes _{R} F\\
&\cong \Ext {i}{C \otimes _{R}M} {(C \otimes _{R} N) \otimes _{R} F}\\
&\cong \Ext {i}{C \otimes _{R}M} {C \otimes _{R} (N\otimes _{R} F)}\\
&\cong \extmic {i} {M} {N \otimes _{R} F}
\end{align*}
which is from~\cite[Theorem 4.1]{takahashi:hasm} and~\cite[Theorem 3.2.15]{enochs:rha}.
The isomorphism~\eqref{thm111230b2} is established similarly.
\end{proof}

Next, we have a version of Hom-evaluation for the relative setting.

\begin{prop}\label{thm111230c}
Assume that $M$ is finitely generated over $R$, 
and
let $I$ be an injective $R$-module.
Then for all $i \geq 0$ there are isomorphisms
\begin{align}
\label{thm111230c1} \torpcm {i} {M} {\hom NI} &\cong \hom {\extpcm {i} MN}{I}\\
\label{thm111230c2} \tormpc {i} {M}{\hom NI}  &\cong \hom {\extmic {i} MN}{I}.
\end{align}
\end{prop}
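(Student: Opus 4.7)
The plan is to mimic the strategy used in Propositions~\ref{thm111230a} and~\ref{thm111230b}: translate the relative Tor and Ext into absolute Tor and Ext via Theorem~\ref{prop110730a'} and~\cite[Theorem 4.1]{takahashi:hasm}, then invoke classical Hom-tensor manipulations together with the standard ``Hom-into-an-injective'' identity
$$\Tor{i}{A}{\Hom{B}{I}}\cong\Hom{\Ext{i}{A}{B}}{I}\qquad(A\text{ finitely generated},\ I\text{ injective})$$
from~\cite[Theorem 3.2.13]{enochs:rha}. The finiteness hypothesis on $M$ will feed into this identity through the fact that $\Hom{C}{M}$ and $\Otimes{C}{M}$ are both finitely generated, which is where the assumption is crucial.

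For~\eqref{thm111230c1}, I would first rewrite the left-hand side using Theorem~\ref{prop110730a'}:
$$\torpcm{i}{M}{\hom{N}{I}}\cong\Tor{i}{\Hom{C}{M}}{\Otimes{C}{\Hom{N}{I}}}.$$
Next, since $C$ is finitely presented (being finitely generated over the noetherian ring $R$) and $I$ is injective, Hom-evaluation gives
$$\Otimes{C}{\Hom{N}{I}}\cong\Hom{\Hom{C}{N}}{I}.$$
Applying the Enochs--Jenda identity with $A=\Hom{C}{M}$ (finitely generated) and $B=\Hom{C}{N}$ yields
$$\Tor{i}{\Hom{C}{M}}{\Hom{\Hom{C}{N}}{I}}\cong\Hom{\Ext{i}{\Hom{C}{M}}{\Hom{C}{N}}}{I},$$
and~\cite[Theorem 4.1]{takahashi:hasm} rewrites the inner Ext as $\extpcm{i}{M}{N}$, giving~\eqref{thm111230c1}.

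For~\eqref{thm111230c2}, I would begin by using the symmetry observed in Remark~\ref{disc120121a} to write $\tormpc{i}{M}{\Hom{N}{I}}\cong\torpcm{i}{\Hom{N}{I}}{M}$ and then Theorem~\ref{prop110730a'}:
$$\tormpc{i}{M}{\Hom{N}{I}}\cong\Tor{i}{\Hom{C}{\Hom{N}{I}}}{\Otimes{C}{M}}.$$
Ordinary Hom-tensor adjointness (no evaluation needed this time) gives $\Hom{C}{\Hom{N}{I}}\cong\Hom{\Otimes{C}{N}}{I}$, and after swapping the factors of Tor, the Enochs--Jenda identity applies with $A=\Otimes{C}{M}$ (finitely generated, using the hypothesis on $M$) and $B=\Otimes{C}{N}$:
$$\Tor{i}{\Otimes{C}{M}}{\Hom{\Otimes{C}{N}}{I}}\cong\Hom{\Ext{i}{\Otimes{C}{M}}{\Otimes{C}{N}}}{I}.$$
Finally,~\cite[Theorem 4.1]{takahashi:hasm} identifies this inner Ext with $\extmic{i}{M}{N}$, completing~\eqref{thm111230c2}.

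There is no real obstacle; the only subtlety is making sure the correct version of Hom-evaluation is used in~\eqref{thm111230c1} (we need $C$ finitely presented and $I$ injective, both of which are available) while for~\eqref{thm111230c2} plain adjointness suffices. Ensuring naturality of every step is routine since each ingredient---Theorem~\ref{prop110730a'}, the Takahashi isomorphism, Hom-evaluation, and the Enochs--Jenda identity---is natural in $M$ and $N$.
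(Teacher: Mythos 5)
Your argument for \eqref{thm111230c1} is exactly the paper's: Theorem~\ref{prop110730a'}, then Hom-evaluation and the $\Tor$--$\Hom$--$\Ext$ duality from \cite[Theorems 3.2.11 and 3.2.13]{enochs:rha}, then \cite[Theorem 4.1]{takahashi:hasm}; and your treatment of \eqref{thm111230c2} is precisely the ``similarly'' the paper leaves to the reader, with the finiteness hypothesis correctly routed through $\Hom CM$ and $\Otimes CM$. The proposal is correct and takes essentially the same approach as the paper.
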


\begin{proof}
The first isomorphism in the next display is from Theorem~\ref{prop110730a'}:
\begin{align*}
\torpcm {i}{M} {\hom {N}{I}}& \cong \Tor {i}{\Hom CM} {C \otimes _{R} \hom N I}\\
&\cong \Tor {i}{\Hom CM} { \hom {\Hom CN}{ I}}\\
&\cong \hom  {\Ext {i} {\Hom CM} {\Hom C N}} {I}\\
&\cong \hom  {\extpcm {i}{M}{N}} {I}.
\end{align*}
The second and third isomorphisms are from~\cite[Theorem 3.2.11 and 3.2.13]{enochs:rha}, 
and the fourth isomorphism follows from \cite [Theorem 4.1]{takahashi:hasm}.
This explains~\eqref{thm111230c1}, and~\eqref{thm111230c2} is established similarly.
\end{proof}

\section{Comparison of Relative Homologies}\label{sec111223a}

In this section,  $B$, $C$  are semidualizing $R$-modules, and  $M$, $N$ are $R$-modules.

\

Using Theorem~\ref{prop110730a'}, we show that relative Tors do not satisfy the naive version of balance, that they are not commutative,
and that they do not agree with absolute Tor in general.

\begin{ex}\label{ex111220a}
Assume that $(R,\m,k)$ is local and that $C$  is not free, that is,
 that $C$ is not cyclic. 
We show that 
\begin{gather*}
\torfcm ikC\ncong \tormfc ikC
\\
\torfcm iCk\ncong\torfcm ikC
\\
\torfcm ikC\ncong\Tor ikC
\end{gather*}
for all $i$, at least in a specific example.

Let $\beta=\beta_0^R(C)\geq 2$.
It is straightforward to show that
$\tormfc ikC=0$ for all $i\geq 1$ and that $\tormfc 0kC\cong \Otimes kC\cong k^\beta$;
see also Proposition~\ref{prop111223e} and Theorem~\ref{prop110811d}.
From Theorem~\ref{prop110730a'}, we have
$$\torfcm 0kC\cong \Otimes{\Hom Ck}{(\Otimes CC)}\cong\Otimes{k^\beta}{(\Otimes CC)}\cong k^{\beta^3}.$$
This is not isomorphic to 
$$\Tor 0Ck\cong\torfcm 0Ck\cong k^\beta\cong \tormfc 0kC$$ 
as $\beta\geq 2$, so 
$\torfcm 0Ck\ncong\torfcm 0kC\ncong \tormfc 0kC$
and $\torfcm 0kC\ncong\Tor 0kC$.

Again using Theorem~\ref{prop110730a'}, for $i\geq 1$ we have
\begin{equation}\label{eq111223a}
\torfcm ikC\cong \Tor i{\Hom Ck}{\Otimes CC}\cong\Tor ik{\Otimes CC}^{\beta}
\end{equation}
and
$$\torfcm iCk=0= \tormfc ikC.$$
Thus, to show that 
$\torfcm iCk\ncong\torfcm ikC\ncong \tormfc ikC$ in general, it suffices to find an example such that 
$\Tor ik{\Otimes CC}\neq 0$ for all $i\geq 1$, that is, such that
$\pd_R(\Otimes CC)=\infty$.\footnote{We believe that this is true in general, under the assumption that
$C$ is not free; see~\cite{frankild:rbsc}.} This is supplied by Lemma~\ref{lem120121a}, assuming that $R$ is artinian.

Finally, we give a specific  example  where
$\torfcm ikC\ncong\Tor ikC$
for all $i$.
Note that~\eqref{eq111223a} shows that
$\torfcm ikC\cong k^{\beta\cdot\beta_i(\Otimes CC)}$.
Since $\Tor ikC\cong k^{\beta_i(C)}$, it suffices to provide an example
where 
\begin{equation}\label{eq111223b}
\beta\cdot\beta_i(\Otimes CC)>\beta_i(C)
\end{equation}
for all $i\geq 1$.

Set $R=k[X,Y]/(X,Y)^2$, so we have $\m^2=0$. 
Let $C=\Hom[k]Rk$, which is dualizing for $R$ and has $\beta=\mu^0_R(R)=2$. 
Lemma~\ref{lem120121b} implies that $\Otimes CC\cong k^4$, 
so we have 
\begin{equation}\label{eq120121b}
\beta_i(\Otimes CC)=4\beta_i(k)=4\cdot 2^i=2^{i+2} 
\end{equation}
for all $i\geq 0$.
Also, we have $\len_R(C)=3$. Since $\m^2=0$, it follows that there is an exact sequence
\begin{equation}\label{eq111223d}
0\to k^3\to R^2\to C\to 0
\end{equation}
which implies that $\beta_1(C)=3$.
Dimension shifting in the sequence~\eqref{eq111223d} implies that
$$\beta_i(C)=3\beta_{i-1}(k)=3\cdot 2^{i-1}$$ 
for all $i\geq 1$. 
From this, one easily deduces the inequality~\eqref{eq111223b} for all $i\geq 1$,
using~\eqref{eq120121b} with the equality $\beta=2$.
\end{ex}

In general, Example~\ref{ex111220a} shows that many naive properties fail for relative homology.
We continue this section by giving some special cases where these naive properties do hold.

\begin{prop}\label{prop111223e}
If the natural map $\Otimes  {C}{\Hom  CM}\to M$
 is an isomorphism
(e.g., if $M\in\catbc(R)$), then 
$\torfcm 0M-\cong\Otimes M-$.
\end{prop}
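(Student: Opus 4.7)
The plan is to derive the isomorphism directly from Theorem~\ref{prop110730a'} together with the evaluation hypothesis, without having to construct a proper $\catfc$-resolution of $M$ by hand.

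First, I would invoke Theorem~\ref{prop110730a'} at $i = 0$, which gives a natural isomorphism
\[
\torfcm{0}{M}{N} \cong \Tor{0}{\Hom{C}{M}}{\Otimes{C}{N}} \cong \Otimes{\Hom{C}{M}}{(\Otimes{C}{N})}.
\]
Then I would apply associativity and commutativity of the tensor product to rewrite the right-hand side as
\[
\Otimes{\Hom{C}{M}}{(\Otimes{C}{N})} \cong \Otimes{(\Otimes{C}{\Hom{C}{M}})}{N}.
\]
Now the hypothesis is that the natural evaluation morphism $\xi^C_M \colon \Otimes{C}{\Hom{C}{M}} \to M$ is an isomorphism, so tensoring this over $R$ with $N$ yields an isomorphism
\[
\Otimes{(\Otimes{C}{\Hom{C}{M}})}{N} \xrightarrow{\cong} \Otimes{M}{N},
\]
and composing with the previous displays produces the desired isomorphism $\torfcm{0}{M}{N} \cong \Otimes{M}{N}$.

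For the parenthetical, I would note that the isomorphism $\Otimes{C}{\Hom{C}{M}} \xrightarrow{\cong} M$ is precisely part (ii) of the definition of $\catbc(R)$ in Definition~\ref{defn111219b}, so the hypothesis is automatic in that case. The only item that requires a brief check is naturality of the composite isomorphism in $N$: each of the three isomorphisms above (the comparison from Theorem~\ref{prop110730a'}, the associativity/commutativity swap, and tensoring $\xi^C_M$ with $N$) is natural in $N$, so no extra work is needed. There is no real obstacle here; the statement is essentially a direct consequence of Theorem~\ref{prop110730a'} and the definition of the Bass class, and the proof is short enough that the main thing to be careful about is making the naturality of each step explicit.
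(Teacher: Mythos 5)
Your proof is correct and is essentially identical to the paper's: both invoke Theorem~\ref{prop110730a'} at $i=0$, use associativity/commutativity of tensor product to regroup as $\Otimes{(\Otimes{C}{\Hom CM})}{N}$, and then apply the evaluation isomorphism. Your extra remarks on naturality are fine but not needed beyond what the paper records.
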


\begin{proof}
Again, by Theorem~\ref{prop110730a'} we have
\begin{align*}
\torfcm 0MN
&\cong\Otimes{\Hom CM}{(\Otimes CN)}\\
&\cong\Otimes{(\Otimes{\Hom CM}{C})}{N}\\
&\cong\Otimes MN
\end{align*}
where the last isomorphism is from the assumption $\Otimes  {C}{\Hom  CM}\cong M$.
\end{proof}

In general, we have $\torfcm iM-\ncong\Tor iM-$ by Example~\ref{ex111220a},
even when $M\in\catbc(R)$.
The next result gives  conditions on $M$ and $N$
guaranteeing that the isomorphism $\torfcm iMN\cong\Tor iMN$ does hold.

\begin{prop}\label{prop110730b}
If $M\in\catbc(R)$ and $N\in\catac(R)$, then
for each $i$ there   are isomorphisms
\begin{align*}
\torpcm iMN
&\cong\Tor i{M}{N}\\
\torfcm iMN&\cong\Tor i{M}{N}.
\end{align*}
\end{prop}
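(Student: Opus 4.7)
The plan is to apply Theorem~\ref{prop110730a'} to reduce both claimed isomorphisms to the single statement
$$\Tor i {\Hom C M}{\Otimes C N}\cong\Tor i M N.$$

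To prove this, I would choose a flat resolution $F$ of $\Hom C M$ and argue that the complex $\Otimes C F$ is a (generally non-flat) resolution of $M$ that nevertheless still computes $\Tor{\bullet} M N$. Foxby equivalence (Remark~\ref{disc111220a}) turns $M \in \catbc(R)$ into $\Hom C M \in \catac(R)$, and every flat module automatically belongs to $\catac(R)$, so every term of the exact complex $F^+$ lies in $\catac(R)$. Lemma~\ref{lem110730a}\eqref{lem110730a1} then gives exactness of $\Otimes C F^+$; and the evaluation isomorphism $\Otimes C {\Hom C M}\xra{\cong}M$ (which holds because $M \in \catbc(R)$) identifies the augmentation of $\Otimes C F^+$ with a surjection onto $M$. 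Thus $\Otimes C F$ is an exact resolution of $M$ by $C$-flat modules.

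To finish, I need to check that this resolution computes $\Tor{\bullet} M N$, i.e., that $\Tor j {\Otimes C {F_i}}{N}=0$ for all $j\geq 1$ and all $i$. Picking a flat resolution $G$ of $N$, flatness of $F_i$ lets $F_i$ pass through homology, so
$$\Tor j {\Otimes C {F_i}}{N}\cong\HH_j(F_i\otimes_R (\Otimes C G))\cong F_i\otimes_R \Tor j C N = 0,$$
the last equality being the Auslander-class hypothesis on $N$. Consequently
$$\Tor i M N \cong \HH_i((\Otimes C F)\otimes_R N)\cong \HH_i(F\otimes_R(\Otimes C N))\cong\Tor i {\Hom C M}{\Otimes C N},$$
and combining with Theorem~\ref{prop110730a'} yields both formulas.

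The proof is really a bookkeeping exercise: no single step is especially hard, but one must be careful to invoke Foxby equivalence so that Lemma~\ref{lem110730a}\eqref{lem110730a1} applies cleanly, and to leverage flatness of the $F_i$ in the Tor-vanishing computation so that the hypothesis $N \in \catac(R)$ can be brought to bear.
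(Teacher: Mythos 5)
Your proposal is correct. It differs from the paper's proof mainly in how the ``balance'' step is executed. The paper works directly with the definition of $\torpcm iMN$: it takes a projective resolution $P$ of $\Hom CM$ and a projective resolution $Q$ of $N$, notes that $\Otimes CP$ is a proper $\catpc$-resolution of $M$ that is moreover quasiisomorphic to $M$ (this is where $M\in\catbc(R)$ enters, via Foxby equivalence and Lemma~\ref{lem110730a}\eqref{lem110730a1}), and then shuttles between $\Otimes{(\Otimes CP)}{Q}$, $\Otimes MQ$, and $\Otimes{(\Otimes CP)}{N}$ using the fact that tensoring with a bounded below complex of projectives preserves quasiisomorphisms together with Lemma~\ref{lem110730a}\eqref{lem110730a2}; the hypothesis $N\in\catac(R)$ is used there to ensure the terms of $\Otimes PN$ lie in $\catac(R)$. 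You instead route everything through Theorem~\ref{prop110730a'}, reducing both isomorphisms to the absolute identity $\Tor i{\Hom CM}{\Otimes CN}\cong\Tor iMN$, and you prove that identity by checking that $\Otimes CF$ is an exact resolution of $M$ whose terms are $\Tor$-acyclic against $N$ (the acyclicity coming from $\Tor jCN=0$ for $j\geq 1$, which is where you use $N\in\catac(R)$), then invoking the standard fact that such a resolution computes $\Tor_\bullet(M,N)$. Your version avoids resolving $N$ in the main argument and replaces the quasiisomorphism bookkeeping with the classical dimension-shifting principle, at the cost of leaving that principle unproved; the paper's version stays entirely within the lemmas it has already established. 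Both uses of the hypotheses land in the same conceptual places, so the proofs are equivalent in substance.
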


\begin{proof}
Let ${P}$ be a  projective resolution of $\Hom CM$,
and let ${Q}$ be a projective resolution of $N$.
Lemma~\ref{lem111220a}\eqref{lem111220a3} implies that  $\Otimes {C}{{P}}$ is
a proper $\catpc$-resolution of~$M$.%

We use the tensor product of complexes.
Since ${Q}$ is a bounded below complex of projective $R$-modules, it respects quasiisomorphisms.
This explains the second isomorphism in the next sequence:
\begin{align*}
\Tor{i}{M}{N}
&\cong\HH_i(\Otimes{M}{{Q}})\\
&\cong\HH_i(\Otimes{(\Otimes {C} {{P}})}{{Q}})\\
&\cong\HH_i(\Otimes{(\Otimes {C} {{P}})}{N})\\
&\cong\torpcm iMN.
\end{align*}
The first isomorphism is from the balance of Tor.
The fourth isomorphism is by definition.
It remains to explain the third isomorphism.

Since ${Q}$ is a projective resolution of $N$, there is a quasiisomorphism ${Q}\xra\simeq N$.
Since ${P}$ is a bounded below complex of projective $R$-modules, the functor
$\Otimes P-$ respects quasiisomorphisms.
So there is a quasiisomorphism $\Otimes {{P}}{{Q}}\xra\simeq \Otimes {{P}}{N}$.
Lemma~\ref{lem110730a}\eqref{lem110730a2} implies that the induced map
$\Otimes C{\Otimes {{P}}{{Q}}}\xra\simeq \Otimes C{\Otimes {{P}}{N}}$ is also a quasiisomorphism.
By the associativity of tensor product, this implies that the complexes
$\Otimes{(\Otimes {C}{{P}})}{{Q}}$ and $\Otimes{(\Otimes {C}{{P}})}{N}$ are quasiisomorphic;
in particular, they have isomorphic homologies, as desired.
\end{proof}

The best results (as best we know) for balance and commutativity are the following.

\begin{prop}\label{prop111229a}
If $M\in\catbb(R)\cap\catac(R)$ and $N\in\catbc(R)\cap\catab(R)$, then one has
$\torfbm iMN\cong\tormfc iMN$ for all $i\geq 0$.
\end{prop}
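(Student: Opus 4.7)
The plan is to reduce both $\torfbm iMN$ and $\tormfc iMN$ to the absolute Tor module $\Tor iMN$ by invoking Proposition~\ref{prop110730b} twice, once for each of $B$ and $C$, together with the symmetry observation from Remark~\ref{disc120121a}. The key structural point is that Proposition~\ref{prop110730b} is asymmetric in its two arguments (the first must lie in the Bass class, the second in the Auslander class). The four-fold hypothesis $M\in\catbb(R)\cap\catac(R)$ and $N\in\catbc(R)\cap\catab(R)$ is arranged precisely so that Proposition~\ref{prop110730b} can be applied in each of the two needed orientations, once for $B$ in the ordering $(M,N)$ and once for $C$ in the ordering $(N,M)$.

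First I would apply Proposition~\ref{prop110730b} with semidualizing module $B$ directly to the pair $(M,N)$: the hypotheses $M\in\catbb(R)$ and $N\in\catab(R)$ yield
$$\torfbm iMN\cong\Tor iMN.$$
Next I would rewrite $\tormfc iMN\cong\torfcm iNM$ using the symmetry recorded in Remark~\ref{disc120121a}, and then apply Proposition~\ref{prop110730b} with semidualizing module $C$ to the swapped pair $(N,M)$: the hypotheses $N\in\catbc(R)$ and $M\in\catac(R)$ give $\torfcm iNM\cong\Tor iNM$. Combining these with the commutativity of absolute Tor yields
$$\tormfc iMN\cong\torfcm iNM\cong\Tor iNM\cong\Tor iMN.$$
Chaining the two displays produces the desired isomorphism $\torfbm iMN\cong\tormfc iMN$ for each $i\geq 0$, and all the isomorphisms involved are natural in $M$ and $N$ because the underlying constructions in Theorem~\ref{prop110730a'} and Proposition~\ref{prop110730b} are.

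There is essentially no genuine obstacle here; both applications of Proposition~\ref{prop110730b} use exactly the hypotheses supplied, and the remaining ingredients (symmetry of the relative Tor in its two arguments, commutativity of absolute Tor) are formal. The only subtlety worth flagging is the bookkeeping needed to match the asymmetric hypothesis of Proposition~\ref{prop110730b} against the correct argument in each application, which is what forces both the Bass-class and Auslander-class hypotheses on each of $M$ and $N$.
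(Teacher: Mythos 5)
Your argument is correct and is exactly the paper's proof: the paper also reduces both $\torfbm iMN$ and $\tormfc iMN$ to $\Tor iMN$ via two applications of Proposition~\ref{prop110730b} (one for $B$ with the pair $(M,N)$, one for $C$ after swapping arguments), merely stating the chain $\torfbm iMN\cong\Tor iMN\cong\tormfc iMN$ without spelling out the bookkeeping. Your write-up just makes explicit the orientation of each application and the use of the symmetry from Remark~\ref{disc120121a}.
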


\begin{proof}
Proposition~\ref{prop110730b} implies that
$$\torfbm iMN\cong\Tor iMN\cong\tormfc iMN$$
for all $i\geq 0$.
\end{proof}

The next result is proved similarly.

\begin{prop}\label{prop111229b}
If $M,N\in\catbc(R)\cap\catac(R)$, then one has
$\torfcm iMN\cong\torfcm iNM$ for all $i\geq 0$.
\end{prop}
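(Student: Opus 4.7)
The plan is to reduce the statement directly to the absolute case via Proposition~\ref{prop110730b}. The hypothesis that $M$ and $N$ both lie in $\catbc(R) \cap \catac(R)$ is exactly what is needed to apply that proposition in both orders of the arguments.

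Concretely, I would proceed as follows. Since $M \in \catbc(R)$ and $N \in \catac(R)$, Proposition~\ref{prop110730b} yields a natural isomorphism
\[
\torfcm i M N \cong \Tor i M N
\]
for every $i \geq 0$. Symmetrically, the hypothesis $N \in \catbc(R)$ and $M \in \catac(R)$ allows another application of Proposition~\ref{prop110730b} with the roles of $M$ and $N$ swapped, giving
\[
\torfcm i N M \cong \Tor i N M.
\]
Combining these two isomorphisms with the commutativity of the absolute Tor functor, $\Tor i M N \cong \Tor i N M$, produces the desired natural isomorphism.

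There is essentially no obstacle here: the only subtlety is verifying that the two-sided membership $M, N \in \catbc(R) \cap \catac(R)$ is used in two different ways, one to activate each instance of Proposition~\ref{prop110730b}. In fact, the entire argument fits into a single chain of natural isomorphisms, mirroring the proof of Proposition~\ref{prop111229a} but with $B=C$ and with both modules required to lie in both Foxby classes in order to invoke the proposition in each direction.
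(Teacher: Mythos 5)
Your proof is correct and matches the paper's intended argument: the paper proves this "similarly" to Proposition~\ref{prop111229a}, i.e., by applying Proposition~\ref{prop110730b} in both orders and invoking commutativity of absolute Tor. Nothing is missing.
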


We next give examples of some modules that satisfy the hypotheses of the previous two results.
First, we show how to find some modules in $\catbc(R)\cap\catab(R)$ and $\catbb(R)\cap\catac(R)$.

\begin{ex}\label{ex111229a}
By~\cite[Corollary 3.8]{frankild:rbsc}, the following conditions are equivalent:
\begin{enumerate}[(i)]
\item 
$C\in\catab(R)$.
\item
$B\in\catac(R)$, and
\item
$\Tor iBC=0$ for all $i\geq 1$ and $\Otimes BC$ is a semidualizing $R$-module.
\end{enumerate}
For instance, if $A$ is a semiduaizling $R$-module such that 
$A\in\catbc(R)$, then $B=\Hom CA$ satisfies these conditions with $\Otimes BC\cong A$.

Assume that the above conditions  are satisfied. 
Then $B\in\catbb(R)\cap\catac(R)$, and it follows that $\catfb(R)\subseteq\catbb(R)\cap\catac(R)$.
By the two-of-three property from
Remark~\ref{disc111220a}, every module of finite $\catfb$-projective dimension is in $\catbb(R)\cap\catac(R)$.
Similarly, every module of finite $\catfc$-projective dimension is in $\catbc(R)\cap\catab(R)$.

Another class of modules like this is from~\cite[Fact 3.13]{sather:crct}.\footnote{Since 
this example is only given to put our results in perspective, we refer the reader to~\cite{sather:crct} for
the relevant notations and definitions.}
Assume that $R$ is Cohen-Macaulay with a dualizing module $D$.
Then $D\in\catbc(R)$, so the dual $\cd:=\Hom CD$ is a semidualizing $R$-module such that
$C\in\catacd(R)$.
Every module of finite $\catg(\catpc)$-projective dimension is in $\catbc(R)\cap\catacd(R)$,
and every module of finite $\catg(\catpcd)$-projective dimension is in $\catbcd(R)\cap\catac(R)$
by symmetry since $C\cong C^{\dagger\dagger}$.
Also, every module of finite $\catg(\caticd)$-injective dimension is in $\catbc(R)\cap\catacd(R)$,
and every module of finite $\catg(\catic)$-injective dimension is in $\catbcd(R)\cap\catac(R)$.
\end{ex}

Finding modules that are in $\catac(R)\cap\catbc(R)$ is more difficult in general.

\begin{ex}\label{ex111229x}
Assume that $R$ is a domain. Then the quotient field $Q(R)$ is both flat and injective, so it is
in $\catac(R)\cap\catbc(R)$ for each semidualizing $R$-module $C$.
\end{ex}

Of course, if $B\cong R\cong C$, then we have
$\torfbm iMN\cong\Tor iMN\cong\tormfc iMN$; see Example~\ref{ex111223a}.
The following result shows that, in the local case, this is the only way to achieve balance
of all $M$ and $N$; it is Theorem~\ref{aprop111223a} from the introduction. 
We discuss the non-local case below because it requires
more technology.

\begin{thm}\label{prop111223a}
Assume that $(R,\m,k)$ is local.
The following  are equivalent:
\begin{enumerate}[\rm(i)]
\item \label{prop111223a1}
$\torfbm iXY\cong\tormfc iXY$ for all $i\geq 0$ and for all $R$-modules $X$, $Y$.
\item \label{prop111223a2}
$\torfbm iB{k}\cong\tormfc iB{k}$ for $i=0$ and some $i\geq 1$.
\item \label{prop111223a3}
$\torfbm i{k}C\cong\tormfc i{k}C$ for $i=0$ and some $i\geq 1$.
\item \label{prop111223a4}
$B\cong R\cong C$.
\end{enumerate}
\end{thm}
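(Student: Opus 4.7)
The implication \eqref{prop111223a4}$\implies$\eqref{prop111223a1} is immediate from Example~\ref{ex111223a}, since if $B\cong R\cong C$ then both relative Tors reduce to absolute Tor. The implications \eqref{prop111223a1}$\implies$\eqref{prop111223a2} and \eqref{prop111223a1}$\implies$\eqref{prop111223a3} are trivial specializations. So the real content is to prove \eqref{prop111223a2}$\implies$\eqref{prop111223a4} and \eqref{prop111223a3}$\implies$\eqref{prop111223a4}; these are symmetric, so I will describe \eqref{prop111223a2}$\implies$\eqref{prop111223a4} in detail.

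The central tool is Theorem~\ref{prop110730a'}, which rewrites the two sides as honest absolute Tors. With $\beta:=\beta_0(B)$ and $\gamma:=\beta_0(C)$, compute
\[
\torfbm iB{k}\cong\Tor i{\Hom BB}{\Otimes Bk}\cong\Tor iR{k^{\beta}}
\]
which is $k^{\beta}$ when $i=0$ and vanishes for $i\geq 1$. On the other side,
\[
\tormfc iB{k}\cong\torfcm i{k}{B}\cong\Tor i{\Hom Ck}{\Otimes CB}\cong\Tor i{k^{\gamma}}{\Otimes CB}.
\]
At $i=0$, Nakayama gives $\beta_0(\Otimes CB)=\gamma\beta$, so this is $k^{\gamma^2\beta}$. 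Equating the two $i=0$ dimensions forces $\beta=\gamma^2\beta$, hence $\gamma=1$. Thus $C$ is cyclic, so $C\cong R$ by Remark~\ref{disc111219bx}.

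With $C\cong R$ in hand, the hypothesis for some $i\geq 1$ becomes $\Tor i{k^{\gamma}}{B}=\Tor ikB^{\gamma}=0$, i.e., $\Tor ikB=0$ for some $i\geq 1$. The standard local-algebra consequence (rigidity of Tor for $k$, or equivalently, vanishing of $\Tor i-$ against the residue field forces finite projective dimension) gives $\pd_R(B)<\infty$. Since $B$ is semidualizing, Remark~\ref{disc111219bx} then forces $B$ to be free of rank $1$, i.e., $B\cong R$. This completes \eqref{prop111223a2}$\implies$\eqref{prop111223a4}.

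The proof of \eqref{prop111223a3}$\implies$\eqref{prop111223a4} is entirely symmetric: now $\tormfc i{k}{C}$ collapses to $\Tor iR{k^{\gamma}}$ (using $\Hom CC\cong R$), vanishing for $i\geq 1$ and equal to $k^{\gamma}$ for $i=0$; and $\torfbm i{k}{C}\cong\Tor i{k^{\beta}}{\Otimes BC}$ gives $k^{\beta^2\gamma}$ at $i=0$. Matching at $i=0$ yields $\beta=1$ so $B\cong R$, after which the $i\geq 1$ hypothesis becomes $\Tor ikC=0$, forcing $C\cong R$. The only real obstacle is the Tor-rigidity step that turns a single vanishing $\Tor ikB=0$ into $\pd_R(B)<\infty$; this is a classical fact about local rings applied once $\gamma=1$ (resp.\ $\beta=1$) has reduced the problem to absolute homology.
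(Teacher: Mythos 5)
Your proposal is correct and follows essentially the same route as the paper: apply Theorem~\ref{prop110730a'} to rewrite both sides as absolute Tors, use the $i=0$ isomorphism and a count of minimal generators via Nakayama to force $\beta_0(C)=1$ (hence $C\cong R$), then use the vanishing at some $i\geq 1$ together with minimality of free resolutions to force $\pd_R(B)<\infty$ and hence $B\cong R$ by Remark~\ref{disc111219bx}. The only cosmetic difference is that you name the ``single Betti number vanishing implies finite projective dimension'' step explicitly, which the paper leaves implicit.
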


\begin{proof}
We verify the implications~\eqref{prop111223a1}$\implies$\eqref{prop111223a2}$\implies$\eqref{prop111223a4}$\implies$\eqref{prop111223a1}.
The implications~\eqref{prop111223a1}$\implies$\eqref{prop111223a3}$\implies$\eqref{prop111223a4}$\implies$\eqref{prop111223a1} are verified similarly.
Of course, the implication~\eqref{prop111223a1}$\implies$\eqref{prop111223a2}
is trivial, and the implication~\eqref{prop111223a4}$\implies$\eqref{prop111223a1} 
is from Example~\ref{ex111223a}.

\eqref{prop111223a2}$\implies$\eqref{prop111223a4}
We exploit Theorem~\ref{prop110730a'}:
\begin{align*}
\torfbm iBk&\cong \Tor i{\Hom BB}{\Otimes Bk}\\
&\cong\Tor iR{k^{\beta_0(B)}}
\\
&\cong\begin{cases}
k^{\beta_0(B)} &\text{if $i=0$} \\ 0 & \text{if $i\neq 0$}
\end{cases}
\\
\tormfc iBk
&\cong \Tor i{\Otimes CB}{\Hom Ck}\\
&\cong \Tor i{\Otimes CB}{k^{\beta_0(C)}}
\\
&\cong k^{\beta_i(\Otimes CB)\beta_0(C)}.
\end{align*}
Assuming that 
$\torfbm 0B{k}\cong\tormfc 0B{k}$, we conclude that
$$\beta_0(B)=\beta_0(\Otimes CB)\beta_0(C)=\beta_0(B)\beta_0(C)^2.$$
Since $\beta_0(B)\neq 0$, it follows that $\beta_0(C)=1$.
So $C$ is cyclic, and therefore $C\cong R$
by Remark~\ref{disc111219bx}.
Assuming 
$\torfbm iB{k}\cong\tormfc iB{k}$
for some $i\geq 1$, we have
$$0=\beta_i(\Otimes CB)\beta_0(C)=\beta_i(B).$$
It follows that $\pd_R(B)<\infty$, so Remark~\ref{disc111219bx} implies that $B\cong R$, as desired.
\end{proof}

Here is a similar result for commutativity.

\begin{cor}\label{prop111223c}
Assume that $(R,\m,k)$ is local.
The following  are equivalent:
\begin{enumerate}[\rm(i)]
\item \label{prop111223c1}
$\torfcm iXY\cong\torfcm iYX$ for all $i\geq 0$ and for all $R$-modules $X$, $Y$.
\item \label{prop111223c1'}
$\tormfc iXY\cong\tormfc iYX$ for all $i\geq 0$ and for all $R$-modules $X$, $Y$.
\item \label{prop111223c2}
$\torfcm iC{k}\cong\torfcm ikC$ for $i=0$ and some $i\geq 1$.
\item \label{prop111223c2'}
$\tormfc iC{k}\cong\tormfc ikC$ for $i=0$ and some $i\geq 1$.
\item \label{prop111223c3}
$C\cong R$.
\end{enumerate}
\end{cor}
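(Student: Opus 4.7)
The plan is to deduce this corollary directly from Theorem~\ref{prop111223a} by specializing $B=C$ and invoking the tensor-product symmetry
\[
\torfcm iMN\cong\tormfc iNM
\]
recorded in Remark~\ref{disc120121a}.

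First I would observe that conditions \eqref{prop111223c1} and \eqref{prop111223c1'} encode the same statement. Indeed, renaming variables and applying the symmetry above, \eqref{prop111223c1} says exactly that
\[
\torfcm iXY\cong\tormfc iXY \quad\text{for all } i,X,Y,
\]
and likewise \eqref{prop111223c1'} unwinds to the very same formula. The same observation shows that \eqref{prop111223c2} and \eqref{prop111223c2'} are both reformulations of
\[
\torfcm iCk\cong\tormfc iCk \quad \text{for } i=0 \text{ and some } i\geq 1.
\]

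With these reformulations in hand, the corollary reduces to showing the equivalence of
\begin{enumerate}[\rm(A)]
\item $\torfcm iXY\cong\tormfc iXY$ for all $i$ and all $R$-modules $X,Y$;
\item $\torfcm iCk\cong\tormfc iCk$ for $i=0$ and some $i\geq 1$;
\item $C\cong R$.
\end{enumerate}
But this is precisely what Theorem~\ref{prop111223a} says when one specializes $B=C$: its conditions \eqref{aprop111223a1}, \eqref{aprop111223a2}, \eqref{aprop111223a4} become (A), (B), (C) respectively, and (C) corresponds to the statement $B\cong R\cong C$ which collapses to $C\cong R$ under $B=C$. Thus no new computation is required.

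The only thing to watch out for is the careful bookkeeping of the symmetry $\torfcm iMN\cong\tormfc iNM$: one must identify the right pairs of modules when translating between the ``commutativity'' statements \eqref{prop111223c1}, \eqref{prop111223c1'} and the ``balance'' statement that appears in Theorem~\ref{prop111223a}. There is no genuine obstacle beyond this symbol-pushing, since all the hard computational work — namely the evaluation of relative Tor on $k$ via Theorem~\ref{prop110730a'} and the deduction that $\beta_0(C)=1$ and $\pd_R(C)<\infty$ — has already been carried out in the proof of Theorem~\ref{prop111223a}.
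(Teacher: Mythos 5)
Your proposal is correct and follows essentially the same route as the paper: the paper also deduces the corollary from Theorem~\ref{prop111223a} with $B=C$, using the symmetry $\torfcm iMN\cong\tormfc iNM$ from Remark~\ref{disc120121a} to convert the commutativity conditions into the balance conditions of that theorem. Your bookkeeping of which condition maps to which is accurate, so nothing further is needed.
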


\begin{proof}
This follows from Theorem~\ref{prop111223a} with $B=C$.
For instance, we verify the implication~\eqref{prop111223c2}$\implies$\eqref{prop111223c3}.
Assume that $\torfcm iC{k}\cong\torfcm ikC$ for $i=0$ and some $i\geq 1$. Then
Remark~\ref{disc120121a} implies that
$$\torfcm iC{k}\cong\torfcm ikC\cong\tormfc iCk$$
for $i=0$ and some $i\geq 1$, so we have $C\cong R$ by 
Theorem~\ref{prop111223a}\eqref{prop111223a2}$\implies$\eqref{prop111223a4}.
\end{proof}

Here is another result of the same flavor.

\begin{cor}\label{prop111223d}
Assume that $(R,\m,k)$ is local.
The following  are equivalent:
\begin{enumerate}[\rm(i)]
\item \label{prop111223d1}
$\torfcm iXY\cong\Tor iXY$ for all $i\geq 0$ and for all $R$-modules $X$, $Y$.
\item \label{prop111223d1'}
$\tormfc iXY\cong\Tor iXY$ for all $i\geq 0$ and for all $R$-modules $X$, $Y$.
\item \label{prop111223d2}
$\torfcm iC{k}\cong\Tor iC{k}$ for some $i\geq 1$.
\item \label{prop111223d2'}
$\tormfc i{k}C\cong\Tor i{k}C$ for some $i\geq 1$.
\item \label{prop111223d3}
$C\cong R$.
\item \label{prop111223d4}
$\torfcm ik{k}\cong\Tor ik{k}$ for some $i\geq 0$.
\item \label{prop111223d4'}
$\tormfc i{k}k\cong\Tor i{k}k$ for some $i\geq 0$.
\end{enumerate}
\end{cor}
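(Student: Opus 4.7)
My plan is to close the cycle of implications by combining the triviality of relative Tor when $C\cong R$ with two direct Betti-number computations in the style of Theorem~\ref{prop111223a}. First I would note that~\eqref{prop111223d3} implies each of~\eqref{prop111223d1},~\eqref{prop111223d1'},~\eqref{prop111223d2},~\eqref{prop111223d2'},~\eqref{prop111223d4},~\eqref{prop111223d4'} immediately from Example~\ref{ex111223a}, and that~\eqref{prop111223d1}$\implies$\eqref{prop111223d2},~\eqref{prop111223d4} and~\eqref{prop111223d1'}$\implies$\eqref{prop111223d2'},~\eqref{prop111223d4'} are trivial specializations. Applying the commutativity isomorphism $\torfcm iMN\cong\tormfc iNM$ from Remark~\ref{disc120121a}, I would identify~\eqref{prop111223d2} with~\eqref{prop111223d2'} and~\eqref{prop111223d4} with~\eqref{prop111223d4'}, so the work reduces to proving~\eqref{prop111223d2}$\implies$\eqref{prop111223d3} and~\eqref{prop111223d4}$\implies$\eqref{prop111223d3}.

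For~\eqref{prop111223d2}$\implies$\eqref{prop111223d3}, I would compute both sides via Theorem~\ref{prop110730a'}:
\[
\torfcm iCk \cong \Tor i{\Hom CC}{\Otimes Ck} \cong \Tor iR{k^{\beta_0(C)}},
\]
which vanishes for every $i\geq 1$. Hypothesis~\eqref{prop111223d2} would then force $\Tor iCk=0$, that is, $\beta_i(C)=0$, for some $i\geq 1$. This makes $\pd_R(C)$ finite, whence $C\cong R$ by Remark~\ref{disc111219bx}.

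For~\eqref{prop111223d4}$\implies$\eqref{prop111223d3}, the analogous rewriting produces
\[
\torfcm ikk \cong \Tor i{k^{\beta_0(C)}}{k^{\beta_0(C)}} \cong k^{\beta_0(C)^2\beta_i(k)},
\]
to be compared with $\Tor ikk\cong k^{\beta_i(k)}$, so the hypothesis becomes the equation $\beta_0(C)^2\beta_i(k)=\beta_i(k)$ for some $i\geq 0$. The main obstacle I anticipate is the degenerate case $\beta_i(k)=0$, in which the equation carries no information about $\beta_0(C)$; this can occur only for $i\geq 1$ and forces $R$ to be regular, hence Gorenstein, so Remark~\ref{disc111219bx} still supplies $C\cong R$. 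In the remaining case $\beta_i(k)\neq 0$ (automatic at $i=0$ since $\beta_0(k)=1$) the equation gives $\beta_0(C)^2=1$, so $C$ is cyclic and once again $C\cong R$ by the same remark.
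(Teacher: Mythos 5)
Your proposal is correct and follows essentially the same route as the paper: reduce everything to the implications \eqref{prop111223d2}$\implies$\eqref{prop111223d3} and \eqref{prop111223d4}$\implies$\eqref{prop111223d3}, then settle these by computing both sides through Theorem~\ref{prop110730a'} as Betti numbers (your treatment of \eqref{prop111223d4}, including the degenerate case $\beta_i(k)=0$ forcing $R$ regular, is verbatim the paper's argument). The only cosmetic difference is that for \eqref{prop111223d2}$\implies$\eqref{prop111223d3} you inline the computation $\torfcm iCk\cong\Tor iR{k^{\beta_0(C)}}=0$ and conclude $\pd_R(C)<\infty$ directly, whereas the paper routes this through Theorem~\ref{prop111223a} with $B=R$; the underlying calculation is the same.
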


\begin{proof}
\eqref{prop111223d2'}$\implies$\eqref{prop111223d3}
Example~\ref{ex111223a}
implies that $\Tor i--
\cong\torfrm i--$.
Thus, we can apply Theorem~\ref{prop111223a} with $B=R$.
Indeed, Proposition~\ref{prop111223e} implies that
$\tormfc 0{k}C\cong\Tor 0{k}C\cong\tormfr 0kC$.
Condition~\eqref{prop111223d2'} translates to say that
$\tormfc i{k}C\cong\Tor i{k}C\cong\tormfr ikC$
for some $i\geq 1$, so we have $C\cong R$ by 
Theorem~\ref{prop111223a}\eqref{prop111223a3}$\implies$\eqref{prop111223a4}.

The equivalence of conditions~\eqref{prop111223d1}--\eqref{prop111223d3}
follows similarly from Theorem~\ref{prop111223a} with $B=R$.
And Example~\ref{ex111223a} justifies the implications 
\eqref{prop111223d3}$\implies$\eqref{prop111223d4}
and
\eqref{prop111223d3}$\implies$\eqref{prop111223d4'}.

\eqref{prop111223d4}$\implies$\eqref{prop111223d3}
Since $\Hom Ck\cong k^{\beta_0(C)}\cong\Otimes Ck$,
we have
\begin{gather*}
\torfcm ik{k}\cong\Tor i{\Hom Ck}{\Otimes Ck}
\cong\Tor ikk^{\beta_0(C)^2} 
\cong k^{\beta_0(C)^2\beta_i(k)}
\end{gather*}
and of course
$\Tor ik{k}
\cong k^{\beta_i(k)}$.
Suppose that $\torfcm ik{k}\cong\Tor ik{k}$ for some $i\geq 0$.
It follows that $\beta_i(k)=\beta_0(C)^2\beta_i(k)$,
so either $\beta_0(C)=1$ or $\beta_i(k)=0$.
In the first case, we have $C\cong R$ as before.
In the other case, the ring $R$ is regular, hence Gorenstein, so 
$C\cong R$ by Remark~\ref{disc111219bx}.

The implication \eqref{prop111223d4'}$\implies$\eqref{prop111223d3}
is verified similarly.
\end{proof}

\begin{disc}
Note that Theorem~\ref{prop111223a} and Corollary~\ref{prop111223c}
do not contain versions of 
the conditions~\eqref{prop111223d4} and~\eqref{prop111223d4'} of Corollary~\ref{prop111223d}.
Indeed, for Corollary~\ref{prop111223c} this is because we always have
$$\torfcm ik{k}\cong k^{\beta_0(C)^2\beta_i(k)}\cong\tormfc ikk$$
as the proof of Corollary~\ref{prop111223c} shows.
Similarly, if one assumes in Theorem~\ref{prop111223a} 
that $\torfbm i{k}k\cong\tormfc i{k}k$, then the only conclusion one would be able to draw from this is that
$\beta_0(B)=\beta_0(C)$, which is not enough to guarantee that $B$ and $C$ are isomorphic,
let alone isomorphic to $R$.
\end{disc}

Now we prove the non-local versions of the  results~\ref{prop111223a}--\ref{prop111223d}.
First, we have the following.
Recall the relation $\approx$ from Definition~\ref{defn111224b}.

\begin{prop}\label{prop111224a}
Assume that $B\approx C$,
and let $[P]\in\Pic(R)$ such that $C\cong \Otimes PB$.
For each $i$, there are natural isomorphisms
\begin{align*}
\torfcm i{M}{N}&\cong\torfbm i{M}N\\
\torpcm i{M}{N}&\cong\torpbm i{M}N
\end{align*}
\end{prop}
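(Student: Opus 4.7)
The plan is to deduce the proposition directly from Lemma \ref{lem111224c}. Under the hypothesis $B \approx C$ with $C \cong \Otimes PB$ for some $[P] \in \Pic(R)$, that lemma furnishes the equalities of subcategories
$$\catpc(R) = \catpb(R), \qquad \catfc(R) = \catfb(R).$$

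The key observation is that the notion of a proper $\catx$-resolution depends only on the underlying subcategory $\catx \subseteq \catm(R)$: by definition, such a resolution is a complex $X$ built from objects of $\catx$ such that the augmented complex $X^+$ is $\Hom{\catx}{-}$-exact. Since $\catpc(R)$ and $\catpb(R)$ are literally the same subcategory of $\catm(R)$, the classes of proper $\catpc$-resolutions and proper $\catpb$-resolutions of $M$ coincide on the nose, and similarly for the flat case.

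Consequently, for any proper $\catpc$-resolution $Q$ of $M$, the same complex $Q$ is also a proper $\catpb$-resolution of $M$, and Definition \ref{defn110730a} gives
$$\torpcm iMN = \HH_i(\Otimes QN) = \torpbm iMN,$$
with the analogous identification $\torfcm iMN = \torfbm iMN$ obtained via the equality $\catfc(R) = \catfb(R)$. Naturality in $M$ and $N$ requires no additional argument, because the functorial comparison maps between resolutions (arising from lifts of identity morphisms, as in Construction \ref{constr111220a}) are identical in either interpretation.

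The main obstacle, such as it is, is purely conceptual: one must confirm that the constructions in Definition \ref{defn110730a} really depend only on the subcategories $\catpc(R)$ and $\catfc(R)$, and not on auxiliary data tied to the module $C$ itself (for instance, through some implicit use of the functors $\Otimes C-$ or $\Hom C-$ in the definition of properness). Direct inspection of Definition \ref{defn110730a} confirms that this is the case, so the proposition follows as a relabeling statement once Lemma \ref{lem111224c} is in hand. If desired, an alternative derivation routes through Theorem \ref{prop110730a'} together with the isomorphisms $\Hom CM \cong \Hom{\Hom PR}{\Hom BM}^{\vee}$-type manipulations afforded by invertibility of $[P]$, but the argument just sketched is cleaner.
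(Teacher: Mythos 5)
Your proof is correct and follows exactly the paper's approach: the paper's entire proof reads ``This follows immediately from Lemma~\ref{lem111224c},'' relying on the same observation that the equalities $\catpc(R)=\catpb(R)$ and $\catfc(R)=\catfb(R)$ make the respective proper resolutions, and hence the relative Tor functors, literally identical. Your additional remarks spelling out why the construction depends only on the subcategory are a reasonable elaboration of what the paper leaves implicit.
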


\begin{proof}
This follows immediately from Lemma~\ref{lem111224c}.
\end{proof}

\begin{cor}\label{cor111225a}
Let $[C]\in\Pic(R)$.
For each $i$ there   are isomorphisms
\begin{align*}
\torpcm iMN
&\cong\Tor i{M}{N}\\
\torfcm iMN&\cong\Tor i{M}{N}.
\end{align*}
\end{cor}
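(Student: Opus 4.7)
The plan is to reduce directly to the trivial case $B=R$ of Proposition~\ref{prop111224a}, using the hypothesis $[C]\in\Pic(R)$ to verify the equivalence $R\approx C$.

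First I would recall Definition~\ref{defn111224b}: we have $B\approx C$ precisely when there exists $[P]\in\Pic(R)$ with $C\cong \Otimes PB$. Taking $B=R$ and $P=C$ (which is a legitimate element of $\Pic(R)$ by hypothesis), we have $\Otimes PB = \Otimes CR \cong C$, so the relation $R\approx C$ holds, witnessed by the element $[C]\in\Pic(R)$ itself.

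Next I would invoke Proposition~\ref{prop111224a} with the semidualizing modules $B=R$ and $C$, obtaining natural isomorphisms
\begin{align*}
\torfcm i{M}{N} &\cong \Tor[\catm\catfr]{i}{M}{N},\\
\torpcm i{M}{N} &\cong \Tor[\catpr\catm]{i}{M}{N}.
\end{align*}
Finally, I would appeal to Example~\ref{ex111223a}, which observes that in the case $C=R$ one has $\catfr(R)=\catf(R)$ and $\catpr(R)=\catp(R)$, so the relative Tor functors coincide with the absolute ones: $\Tor[\catm\catfr]{i}{-}{-}\cong\Tor{i}{-}{-}\cong\Tor[\catpr\catm]{i}{-}{-}$. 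Composing these identifications yields the desired isomorphisms.

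There is no real obstacle here; the statement is essentially a combination of two earlier results, and the only point to verify is that $[C]\in\Pic(R)$ implies $R\approx C$, which is immediate from the definition of $\approx$.
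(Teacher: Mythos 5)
Your proposal is correct and follows essentially the same route as the paper: the paper's proof likewise observes that $[C]\in\Pic(R)$ is equivalent to $C\approx R$ and then cites Proposition~\ref{prop111224a} together with Example~\ref{ex111223a}. Your explicit verification that $P=C$ witnesses the relation $R\approx C$ is a nice touch that the paper leaves implicit.
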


\begin{proof}
The condition $[C]\in\Pic(R)$ is equivalent to $C\approx R$. So, 
the result follows from Example~\ref{ex111223a} and Proposition~\ref{prop111224a}.
\end{proof}

\begin{cor}\label{prop111223b}
The following conditions are equivalent:
\begin{enumerate}[\rm(i)]
\item \label{prop111223b1}
$\torfbm iXY\cong\tormfc iXY$ for all $i\geq 0$ and for all $R$-modules $X$, $Y$.
\item \label{prop111223b2}
$\torfbm iB{R/\m}\cong\tormfc iB{R/\m}$ for  $i=0$, for some $i\geq 1$, and for all $\m\in\mspec(R)$.
\item \label{prop111223b3}
$\torfbm i{R/\m}C\cong\tormfc i{R/\m}C$ for  $i=0$, for some $i\geq 1$, and for all $\m\in\mspec(R)$.
\item \label{prop111223b4}
$B\approx R\approx C$, i.e., $[B],[C]\in\Pic(R)$.
\end{enumerate}
\end{cor}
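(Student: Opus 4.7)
The plan is to prove the cycle of implications
\[
\text{(iv)} \implies \text{(i)} \implies \text{(ii)} \implies \text{(iv)}
\]
and analogously with (iii) in place of (ii), thereby reducing everything to the already-established local case, Theorem~\ref{prop111223a}.

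The implication \eqref{prop111223b4}$\implies$\eqref{prop111223b1} is immediate from Corollary~\ref{cor111225a}: if $[B],[C]\in\Pic(R)$, then both $\torfbm iXY$ and $\tormfc iXY$ coincide with the absolute $\Tor iXY$, so they are certainly isomorphic. The implications \eqref{prop111223b1}$\implies$\eqref{prop111223b2} and \eqref{prop111223b1}$\implies$\eqref{prop111223b3} are obtained by specializing $X$ and $Y$ appropriately (and in particular, the hypothesis in (i) holds for all $i\geq 0$, which is stronger than what (ii) and (iii) demand).

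The main content lies in \eqref{prop111223b2}$\implies$\eqref{prop111223b4}; the implication \eqref{prop111223b3}$\implies$\eqref{prop111223b4} is entirely analogous. Fix $\m\in\mspec(R)$ and localize the given isomorphism $\torfbm iB{R/\m}\cong\tormfc iB{R/\m}$ at $\m$. By Proposition~\ref{prop111221a} (together with Theorem~\ref{prop110730a'} to pass between the $\catp$- and $\catf$-flavors), localization commutes with relative Tor, yielding an isomorphism
\[
\Tor[\catf_{B_\m}\catm]{i}{B_\m}{k(\m)} \;\cong\; \Tor[\catm\catf_{C_\m}]{i}{B_\m}{k(\m)}
\]
of $R_\m$-modules, where $k(\m)=(R/\m)_\m=R_\m/\m R_\m$ is the residue field of $R_\m$. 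By Remark~\ref{disc111219bx}, both $B_\m$ and $C_\m$ are semidualizing over the local ring $R_\m$, and the displayed isomorphism is precisely condition \eqref{prop111223a2} of Theorem~\ref{prop111223a} applied to $(R_\m,\m R_\m,k(\m))$ with semidualizing modules $B_\m,C_\m$. Hence Theorem~\ref{prop111223a} gives $B_\m\cong R_\m\cong C_\m$. Since this holds for every $\m\in\mspec(R)$, the equivalence Fact~\ref{fact111224b}\eqref{fact111224b2} yields $B\approx R\approx C$, i.e., $[B],[C]\in\Pic(R)$, as desired.

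The only point requiring any care is the translation of the hypothesis of (ii) into the hypothesis of the local Theorem~\ref{prop111223a} after localizing at $\m$: one must invoke the flat base-change Proposition~\ref{prop111221a} and confirm that $(R/\m)_\m$ is the residue field of $R_\m$, both of which are routine. No other step presents any real obstacle.
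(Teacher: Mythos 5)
Your proof is correct and follows essentially the same route as the paper: localize the hypothesis at each $\m\in\mspec(R)$ via the flat base-change/localization results, invoke the local Theorem~\ref{prop111223a} to get $B_\m\cong R_\m\cong C_\m$, and conclude with Fact~\ref{fact111224b}, with the reverse implication coming from Corollary~\ref{cor111225a}. The only cosmetic difference is that the paper's written argument carries out the localization for condition~\eqref{prop111223b3} while labeling it as~\eqref{prop111223b2}; you handle~\eqref{prop111223b2} directly and correctly note that~\eqref{prop111223b3} is symmetric.
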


\begin{proof}
As in the proof of Theorem~\ref{prop111223a}, we verify the implications~\eqref{prop111223b2}$\implies$\eqref{prop111223b4}$\implies$\eqref{prop111223b1}.
The implication~\eqref{prop111223b4}$\implies$\eqref{prop111223b1} is from Corollary~\ref{cor111225a}.

\eqref{prop111223b2}$\implies$\eqref{prop111223b4}
Assume that $\torfbm i{R/\m}C\cong\tormfc i{R/\m}C$ for all $i\geq 0$, for all $\m\in\mspec(R)$.
Corollary~\ref{cor111221b} then implies that
\begin{align*}
\torfbmm i{R_{\m}/\m_{\m}}{C_{\m}}
&\cong\torfbm i{R/\m}C_{\m}\\
&\cong\tormfc i{R/\m}C_{\m}\\
&\cong\tormfcm i{R_{\m}/\m_{\m}}{C_{\m}}.
\end{align*}
Because this is so for $i=0$ and some $i\geq 1$, Theorem~\ref{prop111223a} implies that $B_{\m}\cong R_{\m}\cong C_{\m}$.
This holds for all $\m$, so Fact~\ref{fact111224b} implies $B\approx R\approx C$.
\end{proof}

The next two results are proved similarly.

\begin{cor}\label{prop111223c'}
The following conditions are equivalent:
\begin{enumerate}[\rm(i)]
\item \label{prop111223c'1}
$\torfcm iXY\cong\torfcm iYX$ for all $i\geq 0$ and for all $R$-modules $X$, $Y$.
\item \label{prop111223c'1'}
$\tormfc iXY\cong\tormfc iYX$ for all $i\geq 0$ and for all $R$-modules $X$, $Y$.
\item \label{prop111223c'2}
$\torfcm iC{R/\m}\cong\torfcm i{R/\m}C$ for $i=0$, for some $i\geq 1$, and for all $\m\in\mspec(R)$.
\item \label{prop111223c'2'}
$\tormfc iC{R/\m}\cong\tormfc i{R/\m}C$ for $i=0$, for some $i\geq 1$, and for all $\m\in\mspec(R)$.
\item \label{prop111223c'3}
$C\approx R$.
\end{enumerate}
\end{cor}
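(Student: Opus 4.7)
The plan is to follow exactly the pattern of Corollary~\ref{prop111223b}, localizing to reduce the statement to the previously-established local version in Corollary~\ref{prop111223c}. First I would dispose of the equivalences \eqref{prop111223c'1}$\iff$\eqref{prop111223c'1'} and \eqref{prop111223c'2}$\iff$\eqref{prop111223c'2'} using the natural isomorphism $\torfcm iXY\cong\tormfc iYX$ recorded in Remark~\ref{disc120121a}: interchanging the roles of $X$ and $Y$ on one side converts any commutativity statement for $\tor^{\catfc\catm}$ into the corresponding commutativity statement for $\tor^{\catm\catfc}$, and vice versa. This reduces the task to verifying the chain \eqref{prop111223c'3}$\implies$\eqref{prop111223c'1}$\implies$\eqref{prop111223c'2}$\implies$\eqref{prop111223c'3}.

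The implication \eqref{prop111223c'3}$\implies$\eqref{prop111223c'1} is immediate: $C\approx R$ is equivalent to $[C]\in\Pic(R)$, so Corollary~\ref{cor111225a} supplies natural isomorphisms $\torfcm iXY\cong\Tor iXY$ for all $R$-modules $X,Y$ and all $i\geq 0$, and the commutativity of absolute Tor finishes the argument. The implication \eqref{prop111223c'1}$\implies$\eqref{prop111223c'2} is the trivial specialization $X=C$, $Y=R/\m$, taken for each $\m\in\mspec(R)$.

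The substantive step is \eqref{prop111223c'2}$\implies$\eqref{prop111223c'3}. Given \eqref{prop111223c'2}, fix $\m\in\mspec(R)$ and apply $(-)_{\m}$ to the hypothesized isomorphisms. Corollary~\ref{cor111221b}, combined with the identification $(R/\m)_{\m}\cong R_{\m}/\m_{\m}$, converts them into $R_{\m}$-module isomorphisms
\begin{equation*}
\Tor[\catf_{C_{\m}}\catm]{i}{C_{\m}}{R_{\m}/\m_{\m}}\cong\Tor[\catf_{C_{\m}}\catm]{i}{R_{\m}/\m_{\m}}{C_{\m}}
\end{equation*}
holding for $i=0$ and some $i\geq 1$. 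The local case Corollary~\ref{prop111223c}, applied to the semidualizing $R_{\m}$-module $C_{\m}$, then forces $C_{\m}\cong R_{\m}$. Since this holds for every maximal ideal $\m$ of $R$, Fact~\ref{fact111224b} delivers $C\approx R$.

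There is no serious obstacle: the argument is essentially a transcription of the proof of Corollary~\ref{prop111223b}, with Theorem~\ref{prop111223a} replaced by Corollary~\ref{prop111223c}. The one point that deserves mention is that the integer $i\geq 1$ in \eqref{prop111223c'2} may a priori depend on $\m$, but since Corollary~\ref{prop111223c} itself only demands the existence of such an $i$ at the local level, this causes no difficulty in the localization step.
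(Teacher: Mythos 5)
Your proposal is correct and matches the paper's intent exactly: the paper states that Corollary~\ref{prop111223c'} is "proved similarly" to Corollary~\ref{prop111223b}, i.e., by localizing via Corollary~\ref{cor111221b}, invoking the local case (here Corollary~\ref{prop111223c}), and concluding with Fact~\ref{fact111224b}, with Corollary~\ref{cor111225a} handling the converse direction. Your attention to the possible dependence of the exponent $i$ on $\m$ is a nice touch, and the reduction of the primed conditions via Remark~\ref{disc120121a} is exactly right.
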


\begin{cor}\label{prop111223d'}
The following conditions are equivalent:
\begin{enumerate}[\rm(i)]
\item \label{prop111223d'1}
$\torfcm iXY\cong\Tor iXY$ for all $i\geq 0$ and for all $R$-modules $X$, $Y$.
\item \label{prop111223d'1'}
$\tormfc iXY\cong\Tor iXY$ for all $i\geq 0$ and for all $R$-modules $X$, $Y$.
\item \label{prop111223d'2}
$\torfcm iC{R/\m}\cong\Tor iC{R/\m}$ for some $i\geq 1$, for all $\m\in\mspec(R)$.
\item \label{prop111223d'2'}
$\tormfc i{R/\m}C\cong\Tor i{R/\m}C$ for some $i\geq 1$, for all $\m\in\mspec(R)$.
\item \label{prop111223d'3}
$C\approx R$.
\item \label{prop111223d'4}
$\torfcm i{R/\m}{R/\m}\cong\Tor i{R/\m}{R/\m}$ for some $i\geq 0$, and for all $\m\in\mspec(R)$.
\item \label{prop111223d'4'}
$\tormfc i{R/\m}{R/\m}\cong\Tor i{R/\m}{R/\m}$ for some $i\geq 0$, and for all $\m\in\mspec(R)$.
\end{enumerate}
\end{cor}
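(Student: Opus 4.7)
The plan is to reduce to the local case (Corollary \ref{prop111223d}), following exactly the template used in the proof of Corollary \ref{prop111223b}. The implications \eqref{prop111223d'3}$\implies$\eqref{prop111223d'1} and \eqref{prop111223d'3}$\implies$\eqref{prop111223d'1'} are immediate from Corollary \ref{cor111225a}, since $C\approx R$ is precisely the condition $[C]\in\Pic(R)$. The implications \eqref{prop111223d'3}$\implies$\eqref{prop111223d'4} and \eqref{prop111223d'3}$\implies$\eqref{prop111223d'4'} follow from the same corollary together with Example \ref{ex111223a} (which under $C\approx R$ identifies the relative Tors with the absolute Tor). The downward restrictions \eqref{prop111223d'1}$\implies$\eqref{prop111223d'2},\eqref{prop111223d'4} and \eqref{prop111223d'1'}$\implies$\eqref{prop111223d'2'},\eqref{prop111223d'4'} are trivial.

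For the four remaining converses \eqref{prop111223d'2},\eqref{prop111223d'2'},\eqref{prop111223d'4},\eqref{prop111223d'4'}$\implies$\eqref{prop111223d'3}, I would localize at each $\m\in\mspec(R)$ and invoke Corollary \ref{prop111223d} on the local ring $R_{\m}$. The key ingredient is that by Corollary \ref{cor111221b} together with the classical fact that absolute Tor commutes with localization, for any $R$-modules $X,Y$ one has
\[
(\torfcm iXY)_{\m}\cong\torfcmm i{X_{\m}}{Y_{\m}},\qquad \Tor iXY_{\m}\cong\Tor[R_{\m}]i{X_{\m}}{Y_{\m}},
\]
and similarly for $\tormfc i--$. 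Moreover $C_{\m}$ is a semidualizing $R_{\m}$-module by Remark \ref{disc111219bx}. Assuming, for instance, condition \eqref{prop111223d'2}, localizing the isomorphism $\torfcm iC{R/\m}\cong\Tor iC{R/\m}$ at $\m$ yields
\[
\torfcmm i{C_{\m}}{k(\m)}\cong\Tor[R_{\m}]i{C_{\m}}{k(\m)}
\]
where $k(\m)=R_{\m}/\m R_{\m}$. Applying the local implication Corollary \ref{prop111223d}\eqref{prop111223d2}$\implies$\eqref{prop111223d3} over $R_{\m}$ gives $C_{\m}\cong R_{\m}$. Since this holds for every $\m$, Fact \ref{fact111224b} delivers $C\approx R$, which is \eqref{prop111223d'3}. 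The implications from \eqref{prop111223d'2'}, \eqref{prop111223d'4}, \eqref{prop111223d'4'} are proved identically, simply invoking the corresponding local equivalence in Corollary \ref{prop111223d} (parts \eqref{prop111223d2'}, \eqref{prop111223d4}, \eqref{prop111223d4'} respectively).

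There is no substantive obstacle: the proof is a purely mechanical localization argument, and the real work has already been absorbed into the local case. The only point requiring mild attention is that the hypotheses of \eqref{prop111223d'2}--\eqref{prop111223d'4'} quantify over ``some $i\geq 1$'' or ``some $i\geq 0$'' separately at each maximal ideal, but this is exactly what the local Corollary \ref{prop111223d} needs at each $R_{\m}$, so no uniform-$i$ issue arises.
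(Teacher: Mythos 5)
Your proposal is correct and is essentially the paper's argument: the paper proves this corollary "similarly" to Corollary~\ref{prop111223b}, i.e., by the same localization via Corollary~\ref{cor111221b}, reduction to the local Corollary~\ref{prop111223d} at each $\m$, and conclusion via Fact~\ref{fact111224b}, with the easy implications coming from Corollary~\ref{cor111225a}.
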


\begin{disc}\label{disc111228a}
In spite of the general lack of balance properties for relative Tor, one still knows, for instance, that
$\ann_R(\torpcm iMN)\supseteq\ann_R(M)\cup\ann_R(N)$.
This follows, for instance, by Theorem~\ref{prop110730a'}
since $\ann_R(M)\subseteq\ann_R(\Hom CM)$ and $\ann_R(N)\subseteq\ann_R(\Otimes CN)$.
\end{disc}

\section{$\catfc$-Projective Dimension and Vanishing of Relative Homology}\label{sec111231d}

In this section,  $C$ is a semidualizing $R$-module, and  $M$ and $N$ are $R$-modules.

\

We begin this section with two results that are probably implicit in~\cite{takahashi:hasm}.
The first one is, in some sense, a counterpoint to Example~\ref{ex120116a}:
the example says that bounded and exact does not necessarily imply proper,
while the following lemma says that bounded and proper does imply exact.

\begin{lem}\label{lem120121c}
Assume  that $\fcpd_R(M)\leq n$ and let $L$ be a proper $\catfc$-resolution of $M$ such that $L_i=0$ for $i>n$.
Then $L^+$ is exact and we have $M\in\catbc(R)$.
\end{lem}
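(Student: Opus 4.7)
The plan is to exploit the duality between proper $\catfc$-resolutions and proper flat resolutions of $\Hom CM$ established in Lemma~\ref{lem111220a}. The point is that the analogous statement on the flat side is essentially free (bounded augmented proper flat resolutions are automatically exact by Remark~\ref{disc111219a}), so I would transport the problem to the flat world via $\Hom C-$ and then transport the conclusions back via $\Otimes C-$ using Foxby equivalence. The genuinely delicate issue to keep in mind is that a bounded proper $\catfc$-resolution need not be exact on the nose (see Example~\ref{ex120116a} and Remark~\ref{disc111219a}), so one cannot read the exactness of $L^+$ off of the definition; this detour is unavoidable.

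First, I would apply Lemma~\ref{lem111220a}\eqref{lem111220a2} to get that $\Hom CL$ is a proper flat resolution of $\Hom CM$ whose terms in degrees $>n$ vanish. By Remark~\ref{disc111219a}, the augmented complex $\Hom C{L^+}$ is exact. In particular $\fd_R(\Hom CM)\leq n$, so Remark~\ref{disc111220a} yields $\Hom CM\in\catac(R)$, and then Foxby equivalence (also Remark~\ref{disc111220a}) gives $M\in\catbc(R)$. This already establishes the second assertion of the lemma.

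To deduce that $L^+$ itself is exact, I would now push the bounded exact complex $\Hom C{L^+}$ back through $\Otimes C-$. Each of its terms lies in $\catac(R)$: the $\Hom CL_i$ are flat (since $L_i\in\catfc(R)$ and $\catac(R)$ contains the flats by Remark~\ref{disc111220a}), and $\Hom CM$ has finite flat dimension by the previous step. Lemma~\ref{lem110730a}\eqref{lem110730a1} therefore applies to the bounded exact complex $\Hom C{L^+}$ and shows that $\Otimes C{\Hom C{L^+}}$ is exact.

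Finally, I would identify this exact complex with $L^+$. Since $\catfc(R)\subseteq\catbc(R)$ (Remark~\ref{disc111220a}) every $L_i$ lies in $\catbc(R)$, and we just showed that $M$ does too; hence the Bass class evaluation maps $\Otimes C{\Hom C{L_i}}\to L_i$ and $\Otimes C{\Hom CM}\to M$ are isomorphisms, and by naturality they assemble into an isomorphism of complexes $\Otimes C{\Hom C{L^+}}\cong L^+$. Exactness of $L^+$ follows, completing the argument. The only real obstacle is conceptual, namely recognizing that one must not argue directly on the $\catfc$-side but pass through the equivalence to the flat side and back.
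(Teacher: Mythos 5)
Your proposal is correct and follows essentially the same route as the paper: pass to the proper flat resolution $\Hom CL$ of $\Hom CM$ via Lemma~\ref{lem111220a}\eqref{lem111220a2}, use exactness of bounded augmented proper flat resolutions to get $\fd_R(\Hom CM)\leq n$ and hence $M\in\catbc(R)$ by Foxby equivalence, and then recover exactness of $L^+\cong\Otimes C{\Hom C{L^+}}$ from Lemma~\ref{lem110730a}\eqref{lem110730a1}. Your extra care in checking that the terms of $\Hom C{L^+}$ lie in $\catac(R)$ and that the Bass-class evaluation maps assemble naturally into an isomorphism of complexes is a slightly more explicit version of what the paper leaves implicit.
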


\begin{proof}
Lemma~\ref{lem111220a}\eqref{lem111220a2} implies that 
the complex $\Hom CL$ is  a proper flat resolution
of $\Hom CM$ such that  $\Hom CL_i=0$ for $i>n$. In particular, we have $\fd_R(\Hom CM)\leq n$,
so $\Hom CM\in\catac(R)$. By Foxby equivalence, we conclude that $M\in\catbc(R)$, so $M\cong\Otimes C{\Hom CM}$;
see Remark~\ref{disc111220a}(a).
The conditions $L\cong\Otimes C{\Hom CL}$ and $M\cong\Otimes C{\Hom CM}$ imply that $L^+\cong\Otimes C\Hom CL^+$, so 
Lemma~\ref{lem110730a}\eqref{lem110730a1} implies that $L^+$ is exact.
Since each $L_i$ is in $\catbc(R)$, the condition $M\in\catbc(R)$ follows from the two-of-three property
in Remark~\ref{disc111220a}.
\end{proof}

\begin{prop}\label{lem111230a}
\begin{enumerate}[\rm(a)]
\item \label{lem111230a1}
One has $\fcpd_R(M)\leq n$ if and only if there is an exact sequence
$0\to L_n\to\cdots\to L_0\to M\to 0$ such that each $L_i\in\catfc(R)$.
\item \label{lem111230a2}
$\fcpd_R(M)=\fd_R(\Hom CM)$.
\item \label{lem111230a3}
$\fcpd_R(\Otimes CM)=\fd_R(M)$.
\end{enumerate}
\end{prop}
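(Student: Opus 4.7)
My plan is to prove part (b) first and then deduce parts (a) and (c) from it using Lemma~\ref{lem120121c} and Foxby equivalence. Part (b) is the key structural statement tying $\fcpd$ to an ordinary flat dimension, and it is essentially a dimension-shifted version of the correspondence in Lemma~\ref{lem111220a}.

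For part (b), the forward inequality $\fcpd_R(M)\le n \implies \fd_R(\Hom CM)\le n$ is immediate from Lemma~\ref{lem111220a}\eqref{lem111220a2}: apply $\Hom C-$ to a proper $\catfc$-resolution of $M$ of length $\le n$ to get a flat resolution (proper, in particular) of $\Hom CM$ of length $\le n$. For the reverse inequality, I would start with any proper flat resolution $F$ of $\Hom CM$ (one exists by~\cite{bican:amhfc}); assuming $\fd_R(\Hom CM)\le n$, the Schanuel-based Lemma~\ref{lem111231a} produces a proper flat resolution of $\Hom CM$ concentrated in degrees $\le n$. Applying $\Otimes C-$ and invoking Lemma~\ref{lem111220a}\eqref{lem111220a1} then yields a proper $\catfc$-resolution of $M$ of length $\le n$, proving $\fcpd_R(M)\le n$.

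For part (a), the forward direction is exactly Lemma~\ref{lem120121c}, which says that a bounded proper $\catfc$-resolution of $M$ is automatically exact. For the reverse direction, suppose we are given an exact sequence $0\to L_n\to\cdots\to L_0\to M\to 0$ with each $L_i\in\catfc(R)\subseteq\catbc(R)$. The two-of-three property of $\catbc(R)$ (Remark~\ref{disc111220a}) forces $M\in\catbc(R)$, so Lemma~\ref{lem110730aa}\eqref{lem110730aa1} applies to this exact sequence and shows that applying $\Hom C-$ keeps it exact. Since $\Hom C{L_i}$ is flat (as $L_i\cong\Otimes C{F_i}$ with $F_i$ flat and $\Hom C{\Otimes CF_i}\cong F_i$ by $F_i\in\catac(R)$), we conclude $\fd_R(\Hom CM)\le n$, and part (b) gives $\fcpd_R(M)\le n$.

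For part (c), the plan is to reduce to part (b) via Foxby equivalence (Remark~\ref{disc111220a}). If $\fd_R(M)\le n$, then $M\in\catac(R)$ by Remark~\ref{disc111220a}, so $\Hom C{\Otimes CM}\cong M$ and part (b) gives $\fcpd_R(\Otimes CM)=\fd_R(M)\le n$. Conversely, if $\fcpd_R(\Otimes CM)\le n$, Lemma~\ref{lem120121c} (applied to $\Otimes CM$) gives $\Otimes CM\in\catbc(R)$; Foxby equivalence then forces $M\in\catac(R)$ and identifies $\Hom C{\Otimes CM}\cong M$, whence part (b) yields $\fd_R(M)=\fcpd_R(\Otimes CM)\le n$. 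I expect no real obstacle beyond being careful that properness, not just boundedness, is what the lemmas in Section~\ref{sec111231c} need as input; the only subtlety is in part (a)'s reverse direction, where one must remember to first deduce $M\in\catbc(R)$ before invoking Lemma~\ref{lem110730aa} to preserve exactness under $\Hom C-$.
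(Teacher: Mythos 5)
Your proposal is correct and follows essentially the same route as the paper: both rest on Lemma~\ref{lem111220a}, the truncation Lemma~\ref{lem111231a}, Lemma~\ref{lem120121c}, the two-of-three property, and Foxby equivalence, with part (c) reduced to part (b). The only difference is cosmetic ordering — you prove (b) first and route (a)'s converse through it, whereas the paper proves (a) first and then cites that argument for (b) — and this causes no circularity.
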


\begin{proof}
\eqref{lem111230a1}
Assume first that $\fcpd_R(M)\leq n$ and let $L$ be a proper $\catfc$-resolution of $M$ such that $L_i=0$ for $i>n$.
Lemma~\ref{lem120121c} implies that 
$L^+$
is an exact sequence of the desired form.

Conversely, assume that there is an exact sequence
$$L^+=(0\to L_n\to\cdots\to L_0\to M\to 0)$$ 
such that each $L_i\in\catfc(R)$.
The two-of-three property for Bass classes implies that $M\in\catbc(R)$; see Remark~\ref{disc111220a}.
Lemma~\ref{lem110730aa}\eqref{lem110730aa1} implies that 
$\Hom C{L^+}$ is exact, that is, it is an augmented flat resolution of $\Hom CM$ such that
$\Hom CL_i=0$ for $i>n$. So we have $\fd_R(\Hom CM)\leq n$.
Lemma~\ref{lem111231a} implies that a truncation
$$T^+=(0\to K_n\to\Hom C{L_{n-1}}\to\cdots\to \Hom C{L_{0}}\to\Hom CM\to 0)$$
is an exact proper flat resolution of $\Hom CM$.
From Lemma~\ref{lem111220a}\eqref{lem111220a1} we conclude that
$U=\Otimes CT$ is a proper $\catfc$-resolution of $\Otimes C\Hom CM\cong M$
such that $U_i=0$ for all $i\geq n$, so $\fcpd_R(M)\leq n$ as desired.

\eqref{lem111230a2} 
The proof of Lemma~\ref{lem120121c}
implies that $\fcpd_R(M)\geq\fd_R(\Hom CM)$.
For the reverse inequality, assume that $\fd_R(\Hom CM)=m<\infty$.
Given a proper flat resolution $F$ of $\Hom CM$, Lemma~\ref{lem111231a} implies that $F$ has a truncation
$F'$ that is a proper flat resolution of $\Hom CM$ such that $F'_i=0$ for all $i>m$.
As in the previous paragraph, it follows that $\fcpd_R(M)\leq m=\fd_R(\Hom CM)$.

\eqref{lem111230a3}
To show that $\fcpd_R(\Otimes CM)\geq\fd_R(M)$, assume without loss of generality that 
$\fcpd_R(\Otimes CM)<\infty$. It follows that $\Otimes CM\in\catbc(R)$, so $M\in\catac(R)$ by Foxby equivalence;
see Remark~\ref{disc111220a}(b).
Part~\eqref{lem111230a2} implies that 
$$\fcpd_R(\Otimes CM)=\fd_R(\Hom C{\Otimes CM})=\fd_R(M)$$
as desired. The reverse inequality is verified similarly.
\end{proof}

\begin{cor}\label{cor111231b}
Let $R\to S$ be a flat ring homomorphism.
Then there is an inequality
$\fcpd_R(M)\geq\catf_{\Otimes SC}\text{-}\pd_S(\Otimes SM)$
with equality holding when $S$ is faithfully flat over $R$.
\end{cor}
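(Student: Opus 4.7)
The plan is to reduce this corollary to the classical flat base change inequality for ordinary flat dimension, using Proposition~\ref{lem111230a}\eqref{lem111230a2} as the bridge. First I would note that $\Otimes SC$ is a semidualizing $S$-module by Remark~\ref{disc111219bx}, so the right-hand side is well-defined, and one may assume $\fcpd_R(M)<\infty$ throughout, since otherwise the inequality is vacuous. Applying Proposition~\ref{lem111230a}\eqref{lem111230a2} on both sides then recasts the desired statement as
\[\fd_R(\Hom CM)\geq\fd_S(\Hom[S]{\Otimes SC}{\Otimes SM}),\]
with equality claimed when $S$ is faithfully flat over $R$.

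Second, I would identify the right-hand Hom with a tensor product. Because $C$ is finitely generated (hence finitely presented, as $R$ is noetherian) and $S$ is $R$-flat, the standard Hom-flat base change isomorphism yields
\[\Hom[S]{\Otimes SC}{\Otimes SM}\cong\Otimes S{\Hom CM}.\]
At this point the inequality reduces to the classical estimate $\fd_S(\Otimes SN)\leq\fd_R(N)$ applied to $N=\Hom CM$, which is immediate: tensoring a bounded flat resolution of $N$ over $R$ with $S$ produces, by $R$-flatness of $S$, a bounded flat resolution of $\Otimes SN$ over $S$ of the same length.

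For the equality statement under faithful flatness, I would establish the reverse inequality via the usual $\Tor$-vanishing argument. If $\fd_S(\Otimes SN)\leq n$, then $\Tor[S]{n+1}{\Otimes SN}{\Otimes ST}=0$ for every $R$-module $T$, and flat base change for $\Tor$ identifies this group with $\Otimes S{\Tor{n+1}{N}{T}}$; faithful flatness of $S$ then forces $\Tor{n+1}{N}{T}=0$, so $\fd_R(N)\leq n$. Combined with the first direction this gives the claimed equality. The only point requiring some care is the Hom-flat base change isomorphism, but for finitely presented $C$ and flat $S$ this is entirely routine, so no real obstacle should arise.
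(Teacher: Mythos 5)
Your proposal is correct and follows essentially the same route as the paper: both reduce the statement to the classical flat base change (in)equality for $\fd$ via Proposition~\ref{lem111230a}\eqref{lem111230a2} and the isomorphism $\Otimes S{\Hom CM}\cong\Hom[S]{\Otimes SC}{\Otimes SM}$. The only difference is that you spell out the "routine" classical facts (tensoring a flat resolution, and the $\Tor$-vanishing argument under faithful flatness) that the paper leaves to the reader.
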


\begin{proof}
Given an $R$-module $N$, it is routine to show that
$\fd_R(N)\geq\fd_S(\Otimes SN)$ with equality holding when $S$ if faithfully flat over $R$.
This explains the second step in the next display
\begin{align*}
\fcpd_R(M)
&=\fd_R(\Hom CM)\\
&\geq\fd_S(\Otimes S{\Hom CM})\\
&=\fd_S(\Hom[S]{\Otimes SC}{\Otimes SM})\\
&=\catf_{\Otimes SC}\text{-}\pd_S(\Otimes SM).
\end{align*}
The first and fourth steps are from Proposition~\ref{lem111230a}\eqref{lem111230a2},
and the third step is by the isomorphism
$\Otimes S{\Hom CM}\cong\Hom[S]{\Otimes SC}{\Otimes SM}$.
When $S$ is faithfully flat over $R$, we have equality in the second step, hence the desired conclusions.
\end{proof}

\begin{cor}\label{cor111231a}
Given  an integer $n\geq 0$,
the following conditions are equivalent:
\begin{enumerate}[\rm(i)]
\item \label{cor111231a1}
$\fcpd_R(M)\leq n$.
\item \label{cor111231a2}
$\catf_{U^{-1}C}\text{-}\pd_{U^{-1}R}(U^{-1}M)\leq n$ for each multiplicatively closed subset $U\subseteq R$.
\item \label{cor111231a3}
$\catf_{C_{\p}}\text{-}\pd_{R_{\p}}(M_{\p})\leq n$ for each $\p\in\spec(R)$.
\item \label{cor111231a4}
$\catf_{C_{\m}}\text{-}\pd_{R_{\m}}(M_{\m})\leq n$ for each $\m\in\mspec(R)$.
\end{enumerate}
\end{cor}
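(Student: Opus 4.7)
The plan is to reduce to the classical fact that flat dimension can be checked locally at maximal ideals, via the formula $\fcpd_R(M)=\fd_R(\Hom CM)$ from Proposition~\ref{lem111230a}\eqref{lem111230a2}.

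First I would establish \eqref{cor111231a1}$\implies$\eqref{cor111231a2}: this is immediate from Corollary~\ref{cor111231b} applied to the flat ring homomorphism $R\to U^{-1}R$, using the natural identification $\Otimes{U^{-1}R}{C}\cong U^{-1}C$. The implications \eqref{cor111231a2}$\implies$\eqref{cor111231a3}$\implies$\eqref{cor111231a4} are trivial, since prime localizations are instances of multiplicative localizations and maximal ideals are prime.

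The substantive implication is \eqref{cor111231a4}$\implies$\eqref{cor111231a1}. I would argue as follows. For each maximal ideal $\m$, Proposition~\ref{lem111230a}\eqref{lem111230a2} applied to the semidualizing $R_{\m}$-module $C_{\m}$ gives
\[
\catf_{C_{\m}}\text{-}\pd_{R_{\m}}(M_{\m})=\fd_{R_{\m}}(\Hom[R_{\m}]{C_{\m}}{M_{\m}}).
\]
Since $C$ is finitely generated, there is a natural isomorphism $\Hom[R_{\m}]{C_{\m}}{M_{\m}}\cong (\Hom CM)_{\m}$. Hence the hypothesis \eqref{cor111231a4} translates to $\fd_{R_{\m}}((\Hom CM)_{\m})\leq n$ for every $\m\in\mspec(R)$. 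The classical fact that flat dimension can be checked after localizing at maximal ideals then yields $\fd_R(\Hom CM)\leq n$, and a final application of Proposition~\ref{lem111230a}\eqref{lem111230a2} gives $\fcpd_R(M)=\fd_R(\Hom CM)\leq n$, as desired.

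The only mildly delicate point is the compatibility $\Hom[R_{\m}]{C_{\m}}{M_{\m}}\cong (\Hom CM)_{\m}$, which rests on the finite generation of $C$; once this is noted, the argument is essentially bookkeeping around Proposition~\ref{lem111230a} and the classical local nature of flat dimension.
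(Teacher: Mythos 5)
Your proof is correct and follows essentially the same route as the paper's (very terse) argument: the paper also cites Proposition~\ref{lem111230a}, the method of Corollary~\ref{cor111231b}, and the local-global principle for flat dimension. Your write-up simply fills in the details, including the needed compatibility $\Hom[R_{\m}]{C_{\m}}{M_{\m}}\cong(\Hom CM)_{\m}$ coming from finite generation of $C$.
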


\begin{proof}
This follows from Proposition~\ref{lem111230a}
like Corollary~\ref{cor111231b}, using the local global principal for flat dimension.
\end{proof}

\begin{fact}\label{fact110811a}
Let $E$ be an injective $R$-module.
The  next facts are essentially contained in~\cite[Lemmas 4.1 and 4.2]{sather:abc}. See also Fact~\ref{fact111230a}
and Proposition~\ref{lem111230a}.
\begin{enumerate}[(a)]
\item\label{fact110811a1}
If $N$ is $C$-injective, then $\Hom NE$ is $C$-flat.
As a consequence, we have $\fcpd_R(\Hom NE)\leq\icid_R(N)$.
In particular, if $\icid_R(N)<\infty$, then $\fcpd_R(\Hom NE)<\infty$.
When $E$ is faithfully injective, the converses of the first and third statements hold, and equality holds in the second statement.
\item\label{fact110811a2}
If $N$ is $C$-flat, than $\Hom NE$ is $C$-injective.
As a consequence, we have $\icid_R(\Hom NE)\leq\fcpd_R(N)$.
Hence, if $\fcpd_R(\Hom NE)<\infty$, then $\icid_R(N)<\infty$.
When $E$ is faithfully injective, the converses of the first and third statements hold, and equality holds in the second statement.
\end{enumerate}
\end{fact}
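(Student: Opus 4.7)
The plan is to treat parts (a) and (b) in parallel using the concrete presentations $\catic(R)=\{\Hom{C}{I}:I\in\cati(R)\}$ and $\catfc(R)=\{\Otimes{C}{F}:F\in\catf(R)\}$, Hom-tensor adjointness, and the classical fact that over a commutative noetherian ring the functor $\Hom{-}{E}$ exchanges flats and injectives (sending flats to injectives by adjointness, and injectives to flats via the Matlis structure theorem). Together with Foxby equivalence from Remark~\ref{disc111220a}, these reduce the ``exchange'' halves of (a) and (b) to formal manipulation.

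For part (b), writing $N\cong\Otimes{C}{F}$ with $F$ flat, Hom-tensor adjointness gives
$$\Hom{N}{E}\cong\Hom{\Otimes{C}{F}}{E}\cong\Hom{C}{\Hom{F}{E}},$$
and $\Hom{F}{E}$ is injective, so $\Hom{N}{E}\in\catic(R)$. For part (a), write $N\cong\Hom{C}{I}$ with $I$ injective; since $I\in\catbc(R)$, evaluation gives $I\cong\Otimes{C}{\Hom{C}{I}}$, and applying $\Hom{-}{E}$ together with adjointness yields $\Hom{C}{\Hom{N}{E}}\cong\Hom{I}{E}$. Because $\Hom{I}{E}$ is flat it lies in $\catac(R)$, so $\Hom{N}{E}\in\catbc(R)$ by Foxby equivalence, whence $\Hom{N}{E}\cong\Otimes{C}{\Hom{C}{\Hom{N}{E}}}\cong\Otimes{C}{\Hom{I}{E}}\in\catfc(R)$. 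The dimension bounds then follow by feeding a bounded exact $\catic$-coresolution of $N$ of length $\icid_R(N)$ (from Fact~\ref{fact111230a}(b)) into the exact functor $\Hom{-}{E}$: the resulting bounded exact sequence has $C$-flat middle terms, so Proposition~\ref{lem111230a}(a) gives $\fcpd_R(\Hom{N}{E})\leq\icid_R(N)$, and the inequality in part~(b) is dual.

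The converses and the equality under faithful injectivity come from combining Proposition~\ref{lem111230a}(b), Fact~\ref{fact111230a}(d), Hom-tensor adjointness, and the classical identity $\id_R(M)=\fd_R(\Hom{M}{E})$ valid when $E$ is faithfully injective: one computes
$$\fcpd_R(\Hom{N}{E})=\fd_R(\Hom{C}{\Hom{N}{E}})=\fd_R(\Hom{\Otimes{C}{N}}{E})=\id_R(\Otimes{C}{N})=\icid_R(N).$$
The main obstacle is justifying flatness of $\Hom{I}{E}$ for injective $I,E$ over a noetherian ring; I would reduce via Matlis's structure theorem to the indecomposable case $I=E_R(R/\p)$ and handle it directly, or alternatively prove the Hom-evaluation isomorphism $\Otimes{C}{\Hom{I}{E}}\cong\Hom{\Hom{C}{I}}{E}$ directly for $C$ finitely presented and $E$ injective, sidestepping the flatness question entirely.
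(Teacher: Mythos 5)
Your argument is correct, but note that the paper offers no proof of this statement: it is recorded as a Fact and attributed to \cite[Lemmas 4.1 and 4.2]{sather:abc}, so the comparison is between your self-contained derivation and a citation. Your route is the natural one and every ingredient is available in the paper or its standing references. Part (b) is exactly the adjointness computation $\Hom{\Otimes CF}{E}\cong\Hom{C}{\Hom FE}$; for part (a) your detour through the Bass class and Foxby equivalence is valid (injectives lie in $\catbc(R)$, so $I\cong\Otimes C{\Hom CI}$, and Remark~\ref{disc111220a}(a) applies), though it can be compressed to the single Hom-evaluation isomorphism $\Hom{\Hom CI}{E}\cong\Otimes{C}{\Hom IE}$, legitimate here since $C$ is finitely generated over a noetherian ring and $E$ is injective. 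The dimension bounds then follow, as you say, by applying the exact functor $\Hom{-}{E}$ to the bounded exact sequences of Fact~\ref{fact111230a}\eqref{fact111230a2} and Proposition~\ref{lem111230a}\eqref{lem111230a1}, and the equality and converses under faithful injectivity follow from your displayed chain and its mirror image, using $\id_R(X)=\fd_R(\Hom XE)$ and $\fd_R(X)=\id_R(\Hom XE)$ for faithfully injective $E$. Two small corrections: the step you single out as ``the main obstacle,'' flatness of $\Hom IE$ for injectives $I,E$ over a noetherian ring, is not an obstacle but a standard citable fact, namely \cite[Theorem 3.2.16]{enochs:rha}, from the same source the paper already uses for Hom-evaluation in Proposition~\ref{thm111230c}; and your proposed alternative of proving Hom-evaluation directly does \emph{not} ``sidestep the flatness question entirely,'' since to conclude $\Otimes{C}{\Hom IE}\in\catfc(R)$ you still need $\Hom IE$ to be flat by the very definition of $\catfc(R)$.
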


The next two results contain Theorem~\ref{intthm120115b} from the introduction.

\begin{thm}\label{prop110811d}
Given an integer $n\geq 0$, the following conditions are equivalent:
\begin{enumerate}[\rm(i)]
\item \label{prop110811d1}
$\torfcm{i} M- = 0$ for all $i>n$;
\item \label{prop110811d2}
$\torfcm{n+1} M- = 0$; and
\item \label{prop110811d3}
$\fcpd_R(M)\leq n$.
\end{enumerate}
\end{thm}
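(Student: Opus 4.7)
The plan is to establish the cycle (i)$\Rightarrow$(ii)$\Rightarrow$(iii)$\Rightarrow$(i), where (i)$\Rightarrow$(ii) is immediate. For (iii)$\Rightarrow$(i), I would combine Proposition~\ref{lem111230a}\eqref{lem111230a2}, which gives $\fcpd_R(M)=\fd_R(\Hom CM)$, with Theorem~\ref{prop110730a'}, which identifies $\torfcm iMN\cong\Tor i{\Hom CM}{\Otimes CN}$; hypothesis (iii) then yields $\fd_R(\Hom CM)\leq n$, so $\Tor i{\Hom CM}{-}=0$ for $i>n$, and hence $\torfcm iMN=0$ for every $R$-module $N$ and every $i>n$.

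For the main implication (ii)$\Rightarrow$(iii), my plan is to pass through the Ext-side, using the known characterization of $\icid$ together with the $\fcpd$--$\icid$ duality supplied by Fact~\ref{fact110811a}. Theorem~\ref{prop110730a'} together with Remark~\ref{disc120121a} first converts (ii) into the statement $\tormpc{n+1}NM=0$ for every $R$-module $N$. Proposition~\ref{thm111230a}\eqref{thm111230a2}, applied with the roles of $M$ and $N$ swapped, then gives
\[
\extmic{n+1}N{\Hom MI}\cong\Hom{\tormpc{n+1}NM}I=0
\]
for every $R$-module $N$ and every injective $R$-module $I$. In particular $\extmic{n+1}{-}{\Hom MI}=0$, so Fact~\ref{fact110811d} yields $\icid_R(\Hom MI)\leq n$ for every injective $I$.

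To finish, I would specialize to a faithfully injective $R$-module $E$ (for instance $E=\prod_{\m\in\mspec(R)}E_R(R/\m)$). The equality statement in Fact~\ref{fact110811a}\eqref{fact110811a2} gives $\icid_R(\Hom ME)=\fcpd_R(M)$, and combined with the previous inequality applied at $I=E$ this yields $\fcpd_R(M)\leq n$, establishing (iii).

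The main obstacle would be finding the right duality bridge between the Tor hypothesis (ii) and the $\catfc$-projective dimension appearing in (iii); the Ext route through $\icid_R(\Hom ME)$ for faithfully injective $E$ resolves this because Fact~\ref{fact110811a}\eqref{fact110811a2} supplies the equality $\fcpd_R(M)=\icid_R(\Hom ME)$, while Proposition~\ref{thm111230a}\eqref{thm111230a2} is precisely the character-module bridge translating vanishing of $\tormpc$ into vanishing of $\extmic$, which is exactly what Fact~\ref{fact110811d} needs in order to conclude $\icid_R(\Hom MI)\leq n$.
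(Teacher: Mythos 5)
Your argument is correct and follows essentially the same route as the paper: the key implication (ii)$\Rightarrow$(iii) passes, exactly as in the paper's proof, through $\Hom{\tormpc{n+1}NM}{E}\cong\extmic{n+1}N{\Hom ME}$ (Proposition~\ref{thm111230a}), Fact~\ref{fact110811d}, and the equality $\icid_R(\Hom ME)=\fcpd_R(M)$ for faithfully injective $E$ from Fact~\ref{fact110811a}. The only cosmetic difference is that you handle (iii)$\Rightarrow$(i) directly via $\fcpd_R(M)=\fd_R(\Hom CM)$ and absolute Tor, whereas the paper runs all three conditions through the dual module $M^\vee$; both are fine.
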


\begin{proof}
Let $E$ be a faithfully injective $R$-module, and set $(-)^\vee=\Hom -E$.
Condition~\eqref{prop110811d1} is equivalent to the following, since $E$ is faithfully injective:
\begin{enumerate}[\rm(i)]
\item[($\text{i}'$)] $\torfcm iM-^\vee=0$ for all $i>n$.
\end{enumerate}
Since $\torfcm iM-\cong\tormfc i-M$, 
Theorem~\ref{prop110730a'} and Proposition~\ref{thm111230a} imply that ($\text{i}'$) is equivalent to the following:
\begin{enumerate}[\rm(i)]
\item[($\text{i}''$)] $\extmic{i} -{M^\vee}=0$ for all $i>n$.
\end{enumerate}
Similarly, condition~\eqref{prop110811d2} is equivalent to the following:
\begin{enumerate}[\rm(i)]
\item[($\text{ii}''$)] $\extmic{n+1} -{M^\vee}=0$.
\end{enumerate}
Condition~\eqref{prop110811d3} is equivalent to the following, by Fact~\ref{fact110811a}\eqref{fact110811a2}:
\begin{enumerate}[\rm(i)]
\item[($\text{iii}''$)] $\icid_R(M^\vee)\leq n$.
\end{enumerate}
Fact~\ref{fact110811d} shows that the conditions ($\text{i}''$)--($\text{iii}''$) are equivalent.
Thus, the conditions~\eqref{prop110811d1}--\eqref{prop110811d3} from the statement of the theorem are equivalent.
\end{proof}

\begin{thm}\label{prop111231a}
Assume that $M$ is finitely generated over $R$. Given an integer $n\geq 0$, the following conditions are equivalent:
\begin{enumerate}[\rm(i)]
\item \label{prop111231a1}
$\torfcm{i} M{R/\m} = 0$ for all $i>n$ and for each $\m\in\mspec(R)$;
\item \label{prop111231a2}
$\torfcm{n+1} M{R/\m} = 0$ for each $\m\in\mspec(R)$; 
\item \label{prop111231a4}
$\pcpd_R(M)\leq n$; and
\item \label{prop111231a3}
$\fcpd_R(M)\leq n$.
\end{enumerate}
\end{thm}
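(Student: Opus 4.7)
The plan is to prove the cycle $\mathrm{(i)}\Rightarrow\mathrm{(ii)}\Rightarrow\mathrm{(iii)}\Rightarrow\mathrm{(iv)}\Rightarrow\mathrm{(i)}$. The implication $\mathrm{(i)}\Rightarrow\mathrm{(ii)}$ is immediate, and $\mathrm{(iv)}\Rightarrow\mathrm{(i)}$ is a direct application of Theorem~\ref{prop110811d}: condition $\mathrm{(iv)}$ forces $\torfcm{i}{M}{-}=0$ for all $i>n$, and specialization to $N=R/\m$ gives $\mathrm{(i)}$. The content of the theorem therefore lies in $\mathrm{(ii)}\Rightarrow\mathrm{(iii)}\Rightarrow\mathrm{(iv)}$, and this is precisely where finite generation of $M$ is used.

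For $\mathrm{(ii)}\Rightarrow\mathrm{(iii)}$, I would apply Theorem~\ref{prop110730a'} to rewrite
\[
\torfcm{n+1}{M}{R/\m} \;\cong\; \Tor{n+1}{\Hom CM}{\Otimes C{R/\m}}.
\]
Since $C$ is finitely generated, $\Otimes C{R/\m}\cong C/\m C$ is a finite-dimensional vector space over $R/\m$ of dimension $\beta_0^{R_\m}(C_\m)$. Because $C_\m$ is a nonzero semidualizing $R_\m$-module by Remark~\ref{disc111219bx}, this dimension is positive, so $C/\m C$ is a nonzero free $R/\m$-module. Consequently, condition $\mathrm{(ii)}$ is equivalent to $\Tor{n+1}{\Hom CM}{R/\m}=0$ for every $\m\in\mspec(R)$. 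Since $\Hom CM$ is a finitely generated module over the noetherian ring $R$, the standard local--global criterion for projective dimension of finitely generated modules then forces $\pd_R(\Hom CM)\leq n$, which by Fact~\ref{fact111230a}(c) reads $\pcpd_R(M)\leq n$.

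For $\mathrm{(iii)}\Rightarrow\mathrm{(iv)}$, I would use that finitely generated modules over a noetherian ring have equal flat and projective dimensions. Combining Fact~\ref{fact111230a}(c) with Proposition~\ref{lem111230a}(b) then yields
\[
\fcpd_R(M) \;=\; \fd_R(\Hom CM) \;=\; \pd_R(\Hom CM) \;=\; \pcpd_R(M) \;\leq\; n,
\]
closing the cycle. I do not anticipate a serious obstacle: the essential maneuver is the translation, via Theorem~\ref{prop110730a'} together with the elementary observation that $C/\m C$ is a nonzero free $R/\m$-module, of vanishing of relative Tor against $R/\m$ into vanishing of ordinary Tor against $R/\m$ for the finitely generated module $\Hom CM$, after which everything reduces to classical statements about $\pd$ and $\fd$ of finitely generated modules.
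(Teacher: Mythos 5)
Your proposal is correct and follows essentially the same route as the paper's proof: the key implication (ii)$\implies$(iii) is carried out identically, by combining Theorem~\ref{prop110730a'} with the observation that $\Otimes C{R/\m}$ is a nonzero finite direct sum of copies of $R/\m$, and then invoking the standard local--global criterion for the finitely generated module $\Hom CM$. The only cosmetic difference is in (iii)$\implies$(iv), where the paper passes through the bounded exact-sequence characterizations of Fact~\ref{fact111230a} and Proposition~\ref{lem111230a} rather than your chain $\fcpd_R(M)=\fd_R(\Hom CM)=\pd_R(\Hom CM)=\pcpd_R(M)$; both arguments are immediate from results already established in the paper.
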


\begin{proof}
The implication~\eqref{prop111231a1}$\implies$\eqref{prop111231a2} is trivial,
and~\eqref{prop111231a3}$\implies$\eqref{prop111231a1} is from Theorem~\ref{prop110811d}.

\eqref{prop111231a2}$\implies$\eqref{prop111231a4}
Assume that $\torfcm{n+1} M{R/\m} = 0$ for each $\m\in\mspec(R)$.
The module $C_{\m}$ is a semidualizing $R_{\m}$-module,
so it is non-zero and finitely generated.
Thus, we have
\begin{equation}\label{eq111231a}
\Otimes C{R/\m}\cong C/\m C\cong C_{\m}/\m_{\m}C_{\m}\cong (R/\m)^{\beta_0(\m;C)}
\end{equation}
where $\beta_0(\m;C)\neq 0$.

The second step in the next sequence is by Theorem~\ref{prop110730a'}:
\begin{align*}
0
&=\torfcm{n+1} M{R/\m} \\
&\cong\Tor{n+1}{\Hom CM}{(\Otimes C{R/\m})}\\
&\cong\Tor{n+1}{\Hom CM}{R/\m}^{\beta_0(\m;C)}.
\end{align*}
The third step is by~\eqref{eq111231a}. Since $\beta_0(\m;C)\neq 0$, we conclude that
$$\Tor{n+1}{\Hom CM}{R/\m}=0$$ 
for each $\m$. Thus, Proposition~\ref{lem111230a}\eqref{lem111230a2} explains the first step in the next display
$$\fcpd_R(M)=\fd_R(\Hom CM)=\pd_R(\Hom CM)\leq n$$
and the remaining steps follow from the fact that $\Hom CM$ is finitely generated.

\eqref{prop111231a4}$\implies$\eqref{prop111231a3}
Assume that $\pcpd_R(M)\leq n$.
Then Fact~\ref{fact111230a}\eqref{fact111230a1} provides an exact sequence
$$0\to L_n\to\cdots\to L_0\to M\to 0$$ such that each $L_i\in\catpc(R)$.
In particular, we have  $L_i\in\catfc(R)$, so Proposition~\ref{lem111230a}\eqref{lem111230a1}
implies that $\fcpd_R(M)\leq n$.
\end{proof} 

\begin{cor}\label{cor120115a}
If $M$ is finitely generated, then
$\fcpd_R(M)=\pcpd_R(M)$.
\end{cor}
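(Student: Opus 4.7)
The proof will be immediate from the work already done. The plan is to invoke the equivalence of conditions \eqref{prop111231a4} and \eqref{prop111231a3} in Theorem~\ref{prop111231a}: for each integer $n \geq 0$, when $M$ is finitely generated, one has $\pcpd_R(M) \leq n$ if and only if $\fcpd_R(M) \leq n$. Taking the infimum of such $n$ on both sides gives the desired equality $\fcpd_R(M) = \pcpd_R(M)$, including the case when both quantities are infinite.

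There is essentially nothing to obstruct the argument, since Theorem~\ref{prop111231a} has already done all the real work. The only slight care needed is the observation that an equivalence of inequalities $\pcpd_R(M) \leq n \iff \fcpd_R(M) \leq n$ holding for every $n \geq 0$ forces equality of the two homological dimensions, whether finite or infinite; this follows from the definition of each dimension as an infimum.

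Alternatively, one can give a direct argument bypassing Theorem~\ref{prop111231a}. Namely, Proposition~\ref{lem111230a}\eqref{lem111230a2} gives $\fcpd_R(M) = \fd_R(\Hom CM)$, while Fact~\ref{fact111230a}\eqref{fact111230a3} gives $\pcpd_R(M) = \pd_R(\Hom CM)$. Because $R$ is noetherian and $C$, $M$ are finitely generated, the module $\Hom CM$ is also finitely generated, so by a standard result the flat and projective dimensions of $\Hom CM$ coincide. Chaining these three equalities yields $\fcpd_R(M) = \pcpd_R(M)$. I would probably present the proof via Theorem~\ref{prop111231a} since it sits immediately above the corollary and the reduction is entirely transparent.
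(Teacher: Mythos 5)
Your proposal is correct and matches the paper's intent exactly: the corollary is stated without proof precisely because it follows immediately from the equivalence of conditions \eqref{prop111231a4} and \eqref{prop111231a3} in Theorem~\ref{prop111231a}, applied for every $n\geq 0$. Your alternative route via Proposition~\ref{lem111230a}\eqref{lem111230a2}, Fact~\ref{fact111230a}\eqref{fact111230a3}, and the coincidence of flat and projective dimension for finitely generated modules over a noetherian ring is also valid, but the first argument is the one the paper relies on.
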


\begin{cor}\label{prop111231b}
Given  a set $\{N_j\}_{j\in J}$ of $R$-modules,
one has
$$\textstyle
\fcpd_R(\coprod_jN_j)=\sup\{\fcpd_R(N_j)\mid j\in J\}.$$
\end{cor}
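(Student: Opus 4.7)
The plan is to reduce the statement to the corresponding classical fact about flat dimension via Proposition~\ref{lem111230a}\eqref{lem111230a2}, which rewrites $\fcpd_R(-)$ as $\fd_R(\Hom C-)$. This converts the problem to understanding how $\Hom C-$ interacts with coproducts, and then how $\fd_R$ interacts with coproducts.

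First, I would establish the natural isomorphism $\Hom C{\coprod_j N_j}\cong\coprod_j\Hom C{N_j}$. This holds because $C$ is finitely generated: any homomorphism $C\to\coprod_j N_j$ sends each of finitely many generators into a finite subsum, so its image lies in a finite subsum, and from this one reads off the inverse to the obvious map from the coproduct of Homs into the Hom of the coproduct. (This is where finite generation of $C$, guaranteed by the definition of semidualizing, is essential.)

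Second, I would use the well-known identity $\fd_R(\coprod_j L_j)=\sup_j\{\fd_R(L_j)\}$. The $\leq$ direction follows because coproducts of flat modules are flat and coproducts preserve exactness, so assembling flat resolutions of the $L_j$'s gives a flat resolution of the coproduct of the desired length. The $\geq$ direction uses that each $L_k$ is a direct summand of $\coprod_j L_j$ (via the canonical projection), and flat dimension of a summand is bounded by flat dimension of the whole.

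Stringing these together with Proposition~\ref{lem111230a}\eqref{lem111230a2} applied in both directions yields
\begin{align*}
\fcpd_R(\textstyle\coprod_j N_j)
&=\fd_R(\Hom C{\textstyle\coprod_j N_j})\\
&=\fd_R(\textstyle\coprod_j\Hom C{N_j})\\
&=\sup_j\{\fd_R(\Hom C{N_j})\}\\
&=\sup_j\{\fcpd_R(N_j)\},
\end{align*}
as desired. I do not anticipate a real obstacle: both ingredients are standard, and the only place where a semidualizing hypothesis is visibly invoked is to apply Proposition~\ref{lem111230a}\eqref{lem111230a2} and to ensure $C$ is finitely generated so that $\Hom C-$ commutes with coproducts. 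As an alternative proof, one could equivalently argue via Theorem~\ref{prop110811d} combined with Proposition~\ref{prop111228a}\eqref{prop111228a1} (which provides the commutation of $\torfcm i--$ with coproducts in the first slot) plus the fact that $\fcpd$ respects summands, but the route through Proposition~\ref{lem111230a}\eqref{lem111230a2} is the most direct.
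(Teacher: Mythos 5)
Your proof is correct, but it takes a genuinely different route from the paper's. The paper's proof is a one-line application of the Tor-vanishing machinery: by Theorem~\ref{prop110811d}, $\fcpd_R(X)\leq n$ if and only if $\torfcm{n+1}X-=0$, and by Proposition~\ref{prop111228a}\eqref{prop111228a1} (together with Theorem~\ref{prop110730a'}) one has $\torfcm{n+1}{\coprod_jN_j}-\cong\coprod_j\torfcm{n+1}{N_j}-$, which vanishes exactly when every summand does; no separate summand argument is needed, so your sketched ``alternative'' is in fact the paper's argument minus that extra step. Your main argument instead pulls everything back to classical homological algebra via Proposition~\ref{lem111230a}\eqref{lem111230a2}: $\fcpd_R(-)=\fd_R(\Hom C-)$, the isomorphism $\Hom C{\coprod_jN_j}\cong\coprod_j\Hom C{N_j}$ (valid since $C$ is finitely generated --- the same observation the paper itself uses in Lemma~\ref{lem111228a}\eqref{lem111228a2}), and $\fd_R(\coprod_jL_j)=\sup_j\fd_R(L_j)$. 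Both ingredients are sound, and the identification of the paper's $\fd$ with the classical flat dimension needed for the last step is supplied by Remark~\ref{disc120116a} and Lemma~\ref{lem111231a}. What your route buys is independence from the relative-Tor formalism of Section~\ref{sec0} (only Proposition~\ref{lem111230a} is needed); what the paper's route buys is brevity and uniformity with the neighboring results, since Theorem~\ref{prop110811d} and Proposition~\ref{prop111228a} are already in hand.
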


\begin{proof}
Apply Theorem~\ref{prop110730a'},
Proposition~\ref{prop111228a}\eqref{prop111228a1},
and Theorem~\ref{prop110811d}.
\end{proof}

We conclude this section with a two-of-three result for modules of finite $\catfc$-projective dimension.

\begin{cor}\label{cor120126a}
Given an exact sequence $\mathbb{M}=(0\to M'\to M\to M''\to 0)$
of $R$-module homomorphisms, one has
\begin{align*}
\fcpd_R(M)&\leq\sup\{\fcpd_R(M'),\fcpd_R(M'')\} \\
\fcpd_R(M')&\leq\sup\{\fcpd_R(M),\fcpd_R(M'')-1\} \\
\fcpd_R(M'')&\leq\sup\{\fcpd_R(M),\fcpd_R(M')+1\} . 
\end{align*}
In particular, if two of the modules in $\mathbb M$ have finite $\catfc$-projective dimension, then so does the third module.
\end{cor}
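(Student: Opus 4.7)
The plan is to reduce all three inequalities to the classical two-of-three inequalities for ordinary flat dimension via the identity $\fcpd_R(-)=\fd_R(\Hom C-)$ from Proposition~\ref{lem111230a}\eqref{lem111230a2}. In each of the three inequalities, the right-hand side is an $\sup$ of two (possibly shifted) $\fcpd$-values; if the right-hand side is $\infty$ the inequality is vacuous, so I may assume it is finite. This forces both of the corresponding modules in $\mathbb M$ to have finite $\fcpd$, and therefore, by Lemma~\ref{lem120121c}, to lie in $\catbc(R)$.

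Next I would invoke the two-of-three property for the Bass class (Remark~\ref{disc111220a}) to conclude that the third module in $\mathbb M$ also belongs to $\catbc(R)$. In particular $\Ext 1C{M'}=0$, so applying $\Hom C-$ to $\mathbb M$ preserves exactness and produces a short exact sequence
$$0\to \Hom C{M'}\to \Hom CM\to \Hom C{M''}\to 0.$$
Under the identification $\fcpd_R(X)=\fd_R(\Hom CX)$ for each $X\in\{M',M,M''\}$, the three desired inequalities become precisely the standard two-of-three inequalities for flat dimension applied to this short exact sequence, which are classical. This gives all three inequalities at once.

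For the ``in particular'' statement, if two of the modules in $\mathbb M$ have finite $\fcpd$, then the appropriate one of the three inequalities just proved has a finite right-hand side, forcing the third module to have finite $\fcpd$ as well.

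I do not anticipate a real obstacle: the only subtle point is the need to know that the three terms all lie in $\catbc(R)$ before one may apply $\Hom C-$ without losing exactness, but this is handled uniformly by combining Lemma~\ref{lem120121c} with the two-of-three property of $\catbc(R)$. Once the exact sequence of Hom modules is in hand, Proposition~\ref{lem111230a}\eqref{lem111230a2} translates flat-dimension inequalities into $\fcpd$-inequalities verbatim.
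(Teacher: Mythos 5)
Your proposal is correct, and the preliminary step is identical to the paper's: both arguments first reduce to the case where two of the three modules have finite $\catfc$-projective dimension, place all three modules in $\catbc(R)$ via the two-of-three property of the Bass class, and thereby obtain the exactness of $\Hom C{\mathbb M}$. Where you diverge is in how you extract the inequalities from that point. The paper stays inside the relative theory: it uses Lemma~\ref{lem111224a} to see that $\mathbb M$ is $\Hom{\catpc}{-}$-exact, invokes the long exact sequence in $\torfcm{}{-}{N}$ from Proposition~\ref{prop111220b}, and reads off the dimension inequalities from the Tor-vanishing characterization of $\fcpd$ in Theorem~\ref{prop110811d}. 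You instead exit the relative setting immediately, applying $\Hom C-$ to get the short exact sequence $0\to\Hom C{M'}\to\Hom CM\to\Hom C{M''}\to 0$ and quoting the classical two-of-three inequalities for flat dimension, then translating back through $\fcpd_R(X)=\fd_R(\Hom CX)$ (Proposition~\ref{lem111230a}\eqref{lem111230a2}); note that this identity is applied only to modules already known to lie in $\catbc(R)$, where it is unproblematic. Your route is somewhat more economical in that it avoids the machinery of relative Tor entirely, at the cost of leaning on the external classical result; the paper's route has the virtue of illustrating that the long exact sequence of relative homology, together with the vanishing theorem, suffices on its own. Both are complete proofs.
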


\begin{proof}
For each inequality, one can assume without loss of generality that two of the modules in the sequence
have finite $\catfc$-projective dimension. In particular, these modules are in $\catbc(R)$, so the two-of-three
property implies that all three modules are in $\catbc(R)$; see Remark~\ref{disc111220a}. In particular, we have
$\Ext 1C{M'}=0$, so the sequence $\Hom C{\mathbb M}$ is exact. Lemma~\ref{lem111224a} implies that
$\mathbb M$ is $\Hom {\catpc}-$-exact,  so
Proposition~\ref{prop111220b}\eqref{prop111220b1}
and Theorem~\ref{prop110730a'}
provide a long exact sequence 
$$\cdots\torfcm {i+1}{M''\!}{N}\to\torfcm i{M'}{N}\to\torfcm i{M}{N}\to\torfcm i{M''\!}{N}\cdots$$
for each $R$-module $N$.
The desired inequalities follow by analyzing the vanishing in this sequence using
Theorem~\ref{prop110811d}. 
\end{proof}

\section{Pure Submodules}\label{sec110818a}

In this section,  $C$ is a semidualizing $R$-module, and  $M$ is an $R$-module.

\begin{defn}
An $R$-submodule $M'\subseteq M$ is \emph{pure} if for every $R$-module $N$ the induced map
$\Otimes N{M'}\to\Otimes N{M}$ is injective.
An exact sequence
$$\mathbb M=(0\to M'\to M\to M''\to 0)$$
is \emph{pure} if for every $R$-module $N$ the
sequence $\Otimes N{\mathbb M}$ is exact.
\end{defn}

\begin{disc}\label{disc110818a}
An $R$-submodule $M'\subseteq M$ is pure if and only if the induced sequence
$0\to M'\to M\to M/M'\to 0$ is pure.
\end{disc}

The next fact is from~\cite[Proposition 3]{warfield:pacm}.

\begin{fact}\label{fact110818a}
Let $M'\subseteq M$ be an $R$-submodule, and consider the natural
exact sequence $\mathbb M=(0\to M'\to M\to M''\to 0)$. The following conditions are equivalent:
\begin{enumerate}[(i)]
\item\label{fact110818a1}
$M'$ is a pure submodule of $M$.
\item\label{fact110818a2}
for each finitely presented (i.e., finitely generated) $R$-module $N$, the induced map
$\Hom{N}{M}\to\Hom{N}{M''}$ is surjective.
\item\label{fact110818a3}
for each finitely presented $R$-module $N$, the sequence $\Hom{N}{\mathbb M}$ is exact.
\end{enumerate}
\end{fact}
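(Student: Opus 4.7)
The equivalence (ii)$\iff$(iii) is immediate from the left exactness of $\Hom{N}{-}$ applied to the short exact sequence $\mathbb{M}$: the resulting sequence $\Hom{N}{\mathbb{M}}$ is automatically exact except possibly at the right end, so its exactness is equivalent to the surjectivity statement (ii).

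For (i)$\iff$(ii), the plan is to route both implications through a matrix-lifting reformulation of purity: \emph{$M'\subseteq M$ is pure if and only if for every $R$-linear map $\beta\colon R^m\to R^n$ (with $m,n$ finite) and every $\bar y\colon R^m\to M'$ whose composite $\iota\bar y\colon R^m\to M$ factors through $\beta$ as $\bar z\circ\beta$ for some $\bar z\colon R^n\to M$, there exists $w\colon R^n\to M'$ with $w\circ\beta=\bar y$.} To verify this lemma, I would first reduce purity to injectivity of $\Otimes{L}{M'}\to\Otimes{L}{M}$ for every finitely presented $L$, using that every $R$-module is a filtered colimit of finitely presented modules and that tensor products commute with filtered colimits (which are exact). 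Writing $L=\coker(R^n\xra{\alpha}R^m)$ and letting $A$ denote the matrix of $\alpha$, the identification $\Otimes{L}{X}\cong\coker(A\colon X^n\to X^m)$ shows that the desired injectivity is precisely the statement that any $y\in (M')^m$ of the form $Am$ with $m\in M^n$ is already of the form $Am'$ with $m'\in (M')^n$; reading $y$ as a map $\bar y\colon R^m\to M'$ and letting $\beta=\alpha^T\colon R^m\to R^n$ then translates this into the stated lemma.

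Granting the lemma, (i)$\implies$(ii) runs as follows. Given a finitely presented $N$ with presentation $R^m\xra{\beta}R^n\xra{\pi}N\to 0$ and $\vf\colon N\to M''$, use projectivity of $R^n$ to lift $\vf\circ\pi$ to $\bar z\colon R^n\to M$; then $\bar z\circ\beta\colon R^m\to M$ factors through $M'$ as $\iota\circ\bar y$, and purity supplies $w\colon R^n\to M'$ with $w\circ\beta=\bar y$, so that $\bar z-\iota\circ w\colon R^n\to M$ annihilates $\im\beta$ and descends to the required lift $\tilde\vf\colon N\to M$ of $\vf$. Conversely, for (ii)$\implies$(i) I would verify the matrix-lifting property directly: given $\beta$, $\bar y$, $\bar z$ as in the lemma, set $N=\coker\beta$; the composite of $\bar z$ with $M\to M''$ vanishes on $\im\beta$, descending to $\tilde z\colon N\to M''$; by (ii) this lifts to $\hat z\colon N\to M$, and since $\pi\circ\beta=0$, the difference $\bar z-\hat z\circ\pi\colon R^n\to M$ projects to zero in $M''$, hence factors as $\iota\circ w$ for some $w\colon R^n\to M'$ satisfying $\iota\circ w\circ\beta=\bar z\circ\beta=\iota\bar y$, i.e., $w\circ\beta=\bar y$.

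The main obstacle is the matrix-lifting reformulation, specifically the filtered-colimit reduction together with the explicit unpacking of purity as the matrix-solvability property. Once that is in hand, both directions of (i)$\iff$(ii) are formal diagram chases exploiting projectivity of $R^m$ and $R^n$ together with the universal property of the cokernel $N=\coker\beta$.
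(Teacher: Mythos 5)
Your argument is correct. Note that the paper itself offers no proof of this statement: it is recorded as a Fact with a citation to Warfield's \emph{Purity and algebraic compactness for modules} (Proposition 3), so there is no in-paper argument to compare against. What you have written is essentially the standard self-contained proof of Warfield's criterion. The (ii)$\iff$(iii) step is indeed immediate from left exactness of $\Hom N-$. Your matrix-lifting reformulation of purity is stated and used correctly: the transpose bookkeeping (an element $y\in (M')^m$ of the form $Az$ corresponds to $\iota\bar y=\bar z\circ\beta$ with $\beta=\alpha^T$) checks out, and the only genuinely nontrivial input is the reduction of the purity test to finitely presented modules via filtered colimits, which you identify and justify correctly (tensor products commute with filtered colimits, and filtered colimits of abelian groups are exact). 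Both diagram chases --- lifting $\vf\circ\pi$ through $M\onto M''$ and correcting by $\iota w$ in one direction, and descending $\bar z$ to $\coker\beta$ and applying (ii) in the other --- are sound. One cosmetic remark: the parenthetical ``finitely presented (i.e., finitely generated)'' in the statement uses that $R$ is noetherian; your proof works verbatim with ``finitely presented'' and does not need this identification.
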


This fact yields our next result which applies, e.g., when $L$ is semidualizing. 

\begin{prop}\label{prop110818a}
Let $M'\subseteq M$ be a pure submodule, and let $L$ be a finitely generated $R$-module.
Then the submodule $\Hom{L}{M'}\subseteq\Hom{L}{M}$ is pure.
\end{prop}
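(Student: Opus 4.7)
My plan is to invoke the Hom-based characterization of purity in Fact~\ref{fact110818a} twice: once to rewrite the quotient $\Hom{L}{M}/\Hom{L}{M'}$ as $\Hom{L}{M/M'}$, and a second time to verify the purity criterion for the submodule $\Hom{L}{M'}\subseteq\Hom{L}{M}$.

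Write $M'' = M/M'$ and let $\mathbb M = (0\to M'\to M\to M''\to 0)$, which is pure by Remark~\ref{disc110818a}. Since $L$ is finitely generated (hence finitely presented, as $R$ is noetherian), Fact~\ref{fact110818a}\eqref{fact110818a2} applied with $N=L$ gives that $\Hom{L}{M}\to\Hom{L}{M''}$ is surjective. Combining this with the left exactness of $\Hom L{-}$, one obtains a short exact sequence
\begin{equation*}
0\to\Hom{L}{M'}\to\Hom{L}{M}\to\Hom{L}{M''}\to 0,
\end{equation*}
and in particular a canonical isomorphism $\Hom{L}{M}/\Hom{L}{M'}\cong\Hom{L}{M''}$.

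To conclude by Fact~\ref{fact110818a}\eqref{fact110818a2}, let $N$ be an arbitrary finitely generated $R$-module. I need to show that the induced map
$$\Hom{N}{\Hom{L}{M}}\to\Hom{N}{\Hom{L}{M}/\Hom{L}{M'}}\cong\Hom{N}{\Hom{L}{M''}}$$
is surjective. By Hom-tensor adjointness, this map is isomorphic to
$$\Hom{\Otimes LN}{M}\to\Hom{\Otimes LN}{M''}.$$
Because $R$ is noetherian and both $L$ and $N$ are finitely generated, so is $\Otimes LN$. Applying Fact~\ref{fact110818a}\eqref{fact110818a2} once more to the pure inclusion $M'\subseteq M$, now with test module $\Otimes LN$, yields the desired surjectivity.

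There is no real obstacle here: the argument is essentially a bookkeeping exercise combining Hom-tensor adjointness with two applications of the purity criterion, to two different finitely generated test modules. The one thing to check carefully is that the natural map $\Hom{L}{M}/\Hom{L}{M'}\to\Hom{L}{M''}$ is not merely injective but is an isomorphism, and this is exactly where the hypothesis that $L$ is finitely generated is used a first time.
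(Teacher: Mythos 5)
Your argument is correct and is essentially the paper's own proof: both rest on the adjointness isomorphism $\Hom{N}{\Hom{L}{\mathbb M}}\cong\Hom{\Otimes NL}{\mathbb M}$ together with the finitely presented test-module characterization of purity from Fact~\ref{fact110818a}. The only difference is that you make explicit the (worthwhile) preliminary step of identifying $\Hom LM/\Hom L{M'}$ with $\Hom L{M/M'}$ via a first application of the purity criterion to $L$ itself, a point the paper's one-line proof leaves implicit.
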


\begin{proof}
Set $\mathbb M=(0\to M'\to M\to M''\to 0)$, and let $N$ be an $R$-module.
Note that if $N$ and $L$ are finitely generated, then so is $\Otimes NL$.
Now use Fact~\ref{fact110818a} with  Hom-tensor adjointness:
$\Hom{N}{\Hom{L}{\mathbb M}}\cong\Hom{\Otimes{N}{L}}{\mathbb M}$.
\end{proof}

The next result generalizes~\cite[Lemma 9.1.4]{enochs:rha} and~\cite[Lemma 5.2(a)]{holm:fear} in our setting.
It is Theorem~\ref{xthm111231a} from the introduction.

\begin{thm}\label{thm111231a}
Let
$M'\subseteq M$ be a pure submodule. Then one has
$$\fcpd_R(M)\geq\sup\{\fcpd_R(M'),\fcpd_R(M/M')-1\}.$$
\end{thm}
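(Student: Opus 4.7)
The plan is to reduce the claim to the classical Enochs--Jenda flat-dimension inequality~\cite[Lemma~9.1.4]{enochs:rha} by applying $\Hom{C}{-}$ to the given pure sequence. The key identification is Proposition~\ref{lem111230a}\eqref{lem111230a2}, which reads $\fcpd_R(N)=\fd_R(\Hom CN)$ for every $R$-module $N$, so the inequality we seek is equivalent to
$$\fd_R(\Hom CM)\geq\sup\{\fd_R(\Hom C{M'}),\fd_R(\Hom C{M/M'})-1\}.$$

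First, I would apply $\Hom C-$ to the pure exact sequence $\mathbb{M}=(0\to M'\to M\to M/M'\to 0)$. Since $C$ is finitely generated and $\mathbb{M}$ is pure, Fact~\ref{fact110818a}\eqref{fact110818a3} shows that $\Hom C{\mathbb{M}}$ is exact; in particular, the natural map $\Hom CM\to\Hom C{M/M'}$ is surjective with kernel $\Hom C{M'}$, which yields a natural isomorphism $\Hom CM/\Hom C{M'}\cong\Hom C{M/M'}$.

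Next, Proposition~\ref{prop110818a} applied with $L=C$ shows that $\Hom C{M'}\subseteq\Hom CM$ is a pure submodule. Combined with Remark~\ref{disc110818a} and the identification from the previous step, this gives that
$$0\to\Hom C{M'}\to\Hom CM\to\Hom C{M/M'}\to 0$$
is a pure exact sequence of $R$-modules.

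Finally, invoking the classical Enochs--Jenda result~\cite[Lemma~9.1.4]{enochs:rha} on this pure sequence and translating back via Proposition~\ref{lem111230a}\eqref{lem111230a2} yields the desired inequality. The only delicate point is the naturality of the identification $\Hom CM/\Hom C{M'}\cong\Hom C{M/M'}$, which rests on the surjectivity assertion in Fact~\ref{fact110818a}\eqref{fact110818a3}; no further input from the theory of semidualizing modules is needed beyond the equality $\fcpd_R(N)=\fd_R(\Hom CN)$.
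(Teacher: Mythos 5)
Your argument is correct, and it takes a genuinely different route from the paper's. The paper proves the submodule inequality by re-running the classical purity argument inside the relative theory: it first reduces to the case $\fcpd_R(M)<\infty$ to place $M$, $M'$, $M/M'$ in $\catbc(R)$, then uses the characterization of $\fcpd$ by vanishing of $\torfcm{n+1}{-}{-}$ (Theorems~\ref{prop110730a'} and~\ref{prop110811d}) together with a commutative diagram whose vertical maps are injective because $\Hom C{M'}\subseteq\Hom CM$ is pure (Proposition~\ref{prop110818a}); the quotient inequality is then deduced from Corollary~\ref{cor120126a}. You instead use the dictionary $\fcpd_R(-)=\fd_R(\Hom C-)$ of Proposition~\ref{lem111230a}\eqref{lem111230a2} to transport the entire statement to the absolute setting: purity of $\mathbb M$ plus the finite generation of $C$ over the noetherian ring $R$ gives exactness of $\Hom C{\mathbb M}$ via Fact~\ref{fact110818a}\eqref{fact110818a3}, Proposition~\ref{prop110818a} gives purity of the resulting sequence, and the classical inequality finishes the job. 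What your approach buys is brevity and the avoidance of any Bass-class or relative-Tor bookkeeping (no reduction to the finite case, no appeal to \cite{holm:cpdp} or to Corollary~\ref{cor120126a}); what the paper's approach buys is self-containedness, since it only quotes the cited lemma as motivation rather than as an ingredient. The one point you should pin down is that \cite[Lemma~9.1.4]{enochs:rha} really delivers both halves of the displayed inequality for $\fd$; if it only gives $\fd_R(\Hom C{M'})\leq\fd_R(\Hom CM)$, the quotient half follows anyway from the standard bound $\fd_R(X'')\leq\sup\{\fd_R(X),\fd_R(X')+1\}$ applied to the (exact) sequence $\Hom C{\mathbb M}$, so this is a citation detail rather than a gap.
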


\begin{proof}
Assume without loss of generality that
$\fcpd _{R}(M)=n < \infty$. It follows that $M\in\catbc(R)$, and from~\cite[Proposition 2.4(a) and Theorem 3.1]{holm:cpdp}
we know that $M'$ and $M'':=M/M'$ are in $\catbc(R)$.
In particular, the sequence 
\begin{equation}
0\to M'\to M\to M''\to 0\label{eq111231b}
\end{equation}
is $\Hom C-$-exact, so it is $\Hom{\catpc}{-}$-exact by Lemma~\ref{lem111224a}.

We  prove that $\fcpd_{R} (M') \leq \fcpd _{R}(M)$.
By Theorems~\ref{prop110730a'} and~\ref{prop110811d}, we have
$$\Tor {n+1} {\Hom {C}{M}} {\Otimes C-}\cong\torfcm {n+1} {M}{-}= 0.$$
Let $G$
 be an arbitrary $R$-module and let $$0 \to K_{n+1} \to P_{n} \to \cdots \to P_{0}
 \to C\otimes _{R} G \to 0,$$ be a truncation of a projective resolution of $\Otimes CG$.
In the commutative diagram
$$\xymatrix{
0\ar[r]&\Otimes {K_{n+1}}{\Hom C{M'}} \ar[r]\ar[d]&  \Otimes {P_{n}}{\Hom C{M'}} \ar[d] \\
0 \ar[r]&  \Otimes {K_{n+1}}{\Hom {C}{M}} \ar[r] &  \Otimes {P_{n}}{\Hom {C}{M}}
}$$
the bottom row is exact, since $\Tor {n+1} {\Hom CM}  {\Otimes {C}{G}} = 0$.
The two vertical arrows are injective, since $\Hom C{M'}  \subseteq \Hom CM$ is
pure by Proposition~\ref{prop110818a}. Hence, the 
top row of the diagram 
is exact, so we have
$$\torfcm {n +1} {M'}G\cong\Tor {n +1} {\Hom C{M'}}{\Otimes CG} = 0$$
by Theorem~\ref{prop110730a'}. Since $G$ was chosen arbitrarily, we conclude that
$\fcpd _{R}(M') \leq n=\fcpd _{R}(M)$, by Theorem~\ref{prop110811d}.

To complete the proof, we need only observe that Corollary~\ref{cor120126a} implies that  $\fcpd_R(M'')-1\leq n$.
\end{proof}

\begin{ex}\label{ex111231a}
Let $M'$ and $M''$ be $R$-modules such that
$$\fcpd_R(M')<\fcpd_R(M'')<\infty.$$
The trivial exact sequence
$0\to M'\to M'\oplus M''\to M''\to 0$ is split
hence pure, so
\begin{align*}
\fcpd_R(M'\oplus M'')=\fcpd_R(M'')>\sup\{\fcpd_R(M'),\fcpd_R(M'')-1\}
\end{align*}
by Proposition~\ref{prop111231b}. Thus, we can have strict inequality in Theorem~\ref{thm111231a}.
\end{ex}

%\section*{Acknowledgments}

%\bibliography{../+new}
\providecommand{\bysame}{\leavevmode\hbox to3em{\hrulefill}\thinspace}
\providecommand{\MR}{\relax\ifhmode\unskip\space\fi MR }
% \MRhref is called by the amsart/book/proc definition of \MR.
\providecommand{\MRhref}[2]{%
  \href{http://www.ams.org/mathscinet-getitem?mr=#1}{#2}
}
\providecommand{\href}[2]{#2}

\end{document}